\newtheorem{theorem}{Theorem}[section]
\newtheorem{lemma}[theorem]{Lemma}
\newtheorem{proposition}[theorem]{Proposition}
\newtheorem{remark}[theorem]{Remark}
\newtheorem*{ack}{Acknowledgments}
\DeclareMathOperator*{\supp}{supp}
\renewcommand{\H}{\mathcal{H}}
\newcommand{\noi}{\noindent}
\newcommand{\Z}{\mathbb{Z}}
\newcommand{\R}{\mathbb{R}}
\newcommand{\T}{\mathbb{T}}
\newcommand{\D}{\mathcal{D}}
\let\P= \undefined
\newcommand{\P}{\mathbf{P}}
\newcommand{\E}{\mathbb{E}}
\renewcommand{\L}{\mathcal{L}}
\newcommand{\NB}{\mathbb{N}}
\newcommand{\F}{\mathcal{F}}
\def\norm#1{\|#1\|}
\newcommand{\al}{\alpha}
\newcommand{\be}{\beta}
\newcommand{\dl}{\delta}
\newcommand{\nb}{\nabla}
\newcommand{\Dl}{\Delta}
\newcommand{\eps}{\varepsilon}
\newcommand{\g}{\gamma}
\newcommand{\G}{\Gamma}
\newcommand{\ld}{\lambda}
\newcommand{\Ld}{\Lambda}
\newcommand{\s}{\sigma}
\newcommand{\ft}{\widehat}
\newcommand{\wt}{\widetilde}
\newcommand{\wb}{\overline}
\newcommand{\cj}{\overline}
\newcommand{\dt}{\partial_t}
\newcommand{\ta}{\theta}
\renewcommand{\l}{\ell}
\renewcommand{\o}{\omega}
\renewcommand{\O}{\Omega}
\newcommand{\les}{\lesssim}
\newcommand{\ges}{\gtrsim}
\newcommand{\jb}[1]
{\langle #1 \rangle}
\newcommand{\jbb}[1]
{[\hspace{-0.6mm}[ #1 ]\hspace{-0.6mm}]}
\newcommand{\Ws}[1]
{\mathcal{W}_\sigma (#1) }
\newcommand{\Wa}[1]
{\mathcal{W}_\alpha (#1) }
\newcommand{\W}[1]
{\mathcal{W} (#1) }
\newcommand{\rhoo}{\vec{\rho}}
\newcommand{\muu}{\vec{\mu}}
\newcommand{\deff}{\stackrel{\textup{def}}{=}}
\newcommand{\Psid}{{\Psi}^{\mathrm d}}
\DeclareMathOperator{\Id}{Id}
\numberwithin{equation}{section}
\numberwithin{theorem}{section}
\title[Global dynamics for 4-$d$ stochastic NLB]
{Global dynamics for the stochastic nonlinear beam equations
on the four-dimensional torus}
\author{Andreia Chapouto, Guopeng Li, and Ruoyuan Liu}
\begin{document}
\baselineskip = 14pt

%\author[A.~Chapouto]{Andreia Chapouto}

%

\address{
Andreia Chapouto\\ Maxwell Institute for Mathematical Sciences
and 
School of Mathematics\\
The University of Edinburgh\\
and The Maxwell Institute for the Mathematical Sciences\\
James Clerk Maxwell Building\\
The King's Buildings\\
Peter Guthrie Tait Road\\
Edinburgh\\ 
EH9 3FD\\United Kingdom} 

\email{a.chapouto@ed.ac.uk}

\address{
Guopeng Li\\ Maxwell Institute for Mathematical Sciences
and 
School of Mathematics\\
The University of Edinburgh\\
and The Maxwell Institute for the Mathematical Sciences\\
James Clerk Maxwell Building\\
The King's Buildings\\
Peter Guthrie Tait Road\\
Edinburgh\\ 
EH9 3FD\\United Kingdom} 

\email{guopeng.li@ed.ac.uk}

\address{
Ruoyuan Liu\\ Maxwell Institute for Mathematical Sciences
and 
School of Mathematics\\
The University of Edinburgh\\
and The Maxwell Institute for the Mathematical Sciences\\
James Clerk Maxwell Building\\
The King's Buildings\\
Peter Guthrie Tait Road\\
Edinburgh\\ 
EH9 3FD\\United Kingdom} 

\email{ruoyuan.liu@ed.ac.uk}

\subjclass[2010]{35L71, 35R60, 60H15}

\keywords{nonlinear beam equation; energy-critical; well-posedness; Wick renormalization; 
$I$-method; Gibbs measure}

% ---------------------------------------ABSTRACT-----------------------------------

\begin{abstract}
We study global-in-time dynamics 
of the stochastic nonlinear beam equations (SNLB) with 
an additive space-time white noise, 
posed on the four-dimensional torus.
The roughness of the noise leads us to introducing a time-dependent renormalization, after which we show that SNLB is pathwise locally well-posed in all subcritical and most of the critical regimes. For the (renormalized) defocusing cubic SNLB, we establish pathwise global well-posedness
below the energy space, by adapting a hybrid argument of Gubinelli-Koch-Oh-Tolomeo (2022) that combines the $I$-method with a Gronwall-type argument. Lastly, we show almost sure global well-posedness and invariance of the Gibbs measure for the stochastic damped nonlinear beam equations in the defocusing case.

%We study global-in-time dynamics 
%of the stochastic nonlinear beam equations (SNLB) with 
%an additive space-time white noise, 
%posed on the four-dimensional torus.
%Our goal in this paper is three-fold: 
%(i) We first show that after a time dependent renormalization, SNLB is pathwise locally well-posed in all subcritical and most of the critical regimes.
%(ii) Then, we establish global well-posedness of the (renormalized) defocusing cubic SNLB, 
%below the energy space, by adapting a hybrid argument that combines the $I$-method with a Gronwall-type argument.
%(iii) Finally, for the stochastic damped nonlinear beam equations in the defocusing case, we prove almost sure global well-posedness and invariance of the Gibbs measure.
\end{abstract}

%\date{\today}

%\vspace*{-10mm}

%%
%
\maketitle

\section{Introduction}\label{SEC:intro}
We consider the stochastic nonlinear beam equation (SNLB) on $\T^4 = (\R/\Z)^4$ with additive space-time white noise:
\begin{equation}\label{SNLB}
	\begin{cases}
		\dt^2 u + \Dl^2 u \pm u^k = \xi , \\
		(u, \dt u)\vert_{t=0} = (u_0, u_1) \in \mathcal{H}^s(\T^4),
	\end{cases}
\end{equation}
where $u: \R_+\times\T^4 \to \R$, 
$\Delta^2$ denotes the bi-harmonic operator, $k\ge2$ is a natural number, $\xi$ is a space-time white-noise on $\R_+\times \T^4$, and $\H^s(\T^4) = H^s(\T^4) \times H^{s-2}(\T^4)$. We refer to the equation \eqref{SNLB} with `$+$' as defocusing and with `$-$' as focusing.

The deterministic beam equation appears in the literature under various names, such as the fourth-order wave equation, the extensible beam/plate equation, and the Bretherton equation. In the one-dimensional setting, it was first derived by Bretherton in \cite{Br64} to describe the weak interaction between dispersive waves and it has a variety of applications in physics and mechanics; see \cite{PT01} and references therein. We also refer to the non-local model derived by Woinowsky-Krieger \cite{SWK50} to describe the vibration of a clamped extensible beam. 

Our main goal is to establish low regularity well-posedness of \eqref{SNLB} on $\T^4$ with space-time white-noise, which is of analytical interest due to the roughness of the noise. This study on $\T^d$ for $d=1,2,3$ was pursued in \cite{MPTW, Tolomeo-thesis, Tolomeo20}. We also mention the results in \cite{BMS05, Chow06, BOS15} for a non-local version of \eqref{SNLB} with multiplicative noises. For the study of the deterministic nonlinear beam equation, we refer the interested readers to \cite{Lebeau92, P07, P10} and references therein.

The main difficulty in studying \eqref{SNLB} on $\T^4$ comes from the roughness of the noise $\xi$. 
To illustrate this, we first consider the mild formulation of \eqref{SNLB}:
\begin{equation}\label{mild}
	u(t) = S(t)(u_0,u_1) \mp \int_0^t \frac{\sin((t-t')\Dl)}{\Dl} u^k(t')\,dt' + \Psi(t),	
\end{equation}
where $S(t)$ denotes the linear propagator
\begin{equation}\label{defS}
S(t)(u_0,u_1) = \cos(t\Dl) u_0 + \frac{\sin(t\Dl)}{\Dl} u_1,
\end{equation}
with the understanding that $\frac{\sin(t \cdot 0)}{0} = t $, and $\Psi$ is the stochastic convolution which solves the linear stochastic beam equation on $\T^4$:
\begin{equation}
	\dt^2 \Psi + \Dl^2 \Psi   =  \xi.
	\label{SNLB1b}
\end{equation}
More precisely, $\Psi$ is given by
\begin{align}
	\Psi (t) = \int_0^t \frac{\sin ((t - t') \Dl)}{\Dl} dW (t'),
	\label{defPsi}
\end{align}
where $W$ denotes a cylindrical Wiener process on $L^2 (\T^4)$:
\begin{align}
	W(t,x) \deff \sum_{n\in \Z^{4} }  \beta_n(t) e_n(x)
	\label{Wiener1}
\end{align} 
\noi
with\footnote{Here and after, we drop the harmless factor of $2\pi$.}
$
e_n(x) = e^{2\pi  in\cdot x}
$
and  
$\{\be_n\}_{n \in \Z^4 }$ is a family of mutually independent complex-valued Brownian motions conditioned to $\be_{-n} = \cj{\be_n}$, $n\in\Z^4$, with variance $\operatorname{Var}(\be_n(t))=t$. One can show that $W$ lies almost surely in\footnote{In general, we have
	$W\in C^{\frac12-}(\R_+;W^{-\frac{d}{2}-,\infty})$ for $d\geq1$, which follows by Kolmogorov’s continuity criterion and \cite[Lemma 2.6]{GKO2}. Here $W^{s,r}(\T^4)$ denotes the usual $L^r$-based Sobolev spaces defined via the norm in \eqref{Wsp}.}
$C^{\al}(\R_+;H^{-2-\eps}(\T^4))$
for any $\al < \frac 12$ and $\eps > 0$. Therefore, due to the two degrees of spatial smoothing of the linear beam equation, it follows that $\Psi(t) \in H^{-\eps}(\T^4) \setminus L^2(\T^4)$ almost surely, for any $\eps>0$, thus it is merely a distribution. 
Consequently, we expect the solution $u$ to \eqref{mild} to also only be a distribution and thus the product $u^k$ is classically ill-defined. To overcome this difficulty, we closely follow the work of Gubinelli-Koch-Oh \cite{GKO}  for wave equations (see also \cite{OT2}), and construct solutions $u=\Psi+v$ which solve a suitably renormalized version of \eqref{SNLB}.

We now detail this renormalization procedure. We first smooth the noise $\xi$ in \eqref{SNLB} via Fourier truncation and consider the truncated stochastic convolution $\Psi_N$ given by  
\begin{align*} 
	\Psi_N(t, x)  =   \pi_N \Psi(t, x)
	= \sum_{\substack{n \in \mathbb{Z}^4 \\ |n| \leq N}} e_n(x)
	\int_0^t \frac{\sin ((t - t') |n|^2)}{ |n|^2  } d  \beta_n (t'),
%	\label{G0}
\end{align*}
where $\pi_N$ denotes the frequency truncation onto $\{|n| \le N\}$. 
Then, for each fixed $x \in \T^4$ and $t \geq 0$,  
it follows from the Ito isometry
that the random variable $\Psi_N(t, x)$
is a mean-zero real-valued Gaussian random variable with variance
\begin{align}
	\s_N(t) &= \E \big[\Psi_N^2(t, x)\big]
	\sim  t \log N,
	\label{sig1}
\end{align}

\noi
which is independent of $x \in \T^4$.

Let $u_N$ be the solution to SNLB \eqref{SNLB} with the regularized noise $\pi_N \xi$, which satisfies the mild formulation \eqref{mild} with the truncated stochastic convolution $\Psi_N$. Motivated by \eqref{mild}, we introduce the first order expansion (\cite{McKean, BO96, DPD}):
\begin{equation}
	\label{eq:ansatz}
	u_N = \Psi_N + v_N, 
\end{equation}
where the remainder $v_N$ solves the following nonlinear beam equation:
\begin{equation}
	\label{eq:vN}
	\dt^2 v_N + \Dl^2 v_N \pm \sum_{\ell=0}^k {k\choose \ell} \Psi_N^\ell v_N^{k-\ell} = 0.
\end{equation}
Unfortunately, due to \eqref{sig1}, the monomials $\Psi_N^\l$ in \eqref{eq:vN} do not have good limiting behavior as $N\to\infty$. Instead, we define the \emph{Wick-ordered} power $\Ws{  \Psi^\l_N  }$
as
\begin{align}
	\Ws{ \Psi^\l_N(t, x) }   \, \stackrel{\text{def}}{=} H_\ell(\Psi_N(t, x);\sigma_N(t)),
	\label{Herm1}
\end{align}

\noi
where $H_\l(x; \s )$ is the Hermite polynomial of degree $\l$, which can be shown to converge to a limit $\Ws{\Psi^\l}$ in $L^p(\O; C([0,T];W^{-\eps,\infty}(\T^4)))$,
%\footnote{Here, $W^{s, r}(\T^4)$
%	denotes the usual $L^r$-based Sobolev space (Bessel potential space) defined by the norm:
%	\[\| u\|_{W^{s, r}} = \| \jb{\nb}^s u \|_{L^r} = \big\| \F^{-1}( \jb{n}^s \ft u(n))\big\|_{L^r}.\]
%	When $r = 2$, we have $H^s(\T^4) = W^{s, 2}(\T^4)$.}  
for any $1\le p < \infty$ and $\eps > 0$ as $N \to \infty$; 
see Subsection \ref{SUBSEC:2a}. 
We then consider the Wick renormalized version of \eqref{eq:vN}
\begin{align}
	\label{eq:vN_wick}
	\dt^2 v_N + \Dl^2 v_N \pm \sum_{\ell=0}^k {k\choose \ell} \Ws{\Psi_N^\ell} v_N^{k-\ell} = 0,
\end{align}
which
converges, as $N\to\infty$, to the following equation:
\begin{align}
	\label{eq:v_wick}
	\dt^2 v + \Dl^2 v \pm \sum_{\ell=0}^k {k\choose \ell} \Ws{\Psi^\l} v^{k - \l} = 0.
\end{align} 
Lastly, from \eqref{eq:ansatz} and
\eqref{Herm3-1} below, 
we can define the Wick-ordered nonlinearity $\Ws{u_N^k}$ as
\begin{equation*}
	\Ws{u_N^k (t, x)} \deff H_k(u_N(t, x);\s_N(t)) = \sum_{\l=0}^k {k\choose \ell}  \Ws{ \Psi_N^\l (t,x) }
	v_N^{k-\ell} (t, x).
	%\label{Herm_decomp}
\end{equation*}
Consequently, if $v_N$ solves \eqref{eq:vN_wick}, then $u_N = \Psi_N + v_N$ satisfies the following truncated Wick renormalized SNLB:
\begin{align}
	\dt^2 u_N + \Dl^2 u_N \pm \Ws{u_N^k} = \pi_N \xi.
	\label{SNLB2}
\end{align} 
Similarly, with $u=\Psi + v$ for some suitable $v$, we define the Wick-ordered nonlinearity~as 
\begin{align}
	\label{eq:wick}
	\Ws{  u^k}  \deff  \sum_{\ell=0}^k {k\choose \ell} \Ws{ \Psi^\l   } v^{k - \l} ,
\end{align}
and so if $v$ solves \eqref{eq:v_wick}, then $u = \Psi + v$ 
solves the following Wick renormalized SNLB:
\begin{align}
	\dt^2 u + \Dl^2 u \pm \Ws{u^k} = \xi.
	\label{SNLB3}
\end{align}
%where $u=\Psi +v$, for a suitable $v$, and the nonlinearity is understood as in \eqref{eq:wick}.

\medskip

Before stating our first main result on local well-posedness of \eqref{SNLB3}, let us discuss the scaling critical regularity associated to  the deterministic nonlinear beam equation (NLB):
\begin{align}
	\label{NLB1}
	\dt^2 u +  \Dl^2   u   \pm  u^k = 0.
\end{align}
On $\R^4$, \eqref{NLB1} enjoys the following scaling symmetry: if $u$ is a solution to~\eqref{NLB1} then $u_\ld(t,x) \deff \ld^{\frac{4}{k-1}} u(\ld^2 t, \ld x)$ is also a solution to \eqref{NLB1}. This induces the scaling critical Sobolev index
$s_\text{scaling} = 2 - \frac{4}{k-1}$, i.e., the homogeneous Sobolev $\dot{H}^{s}(\R^4)$-norm with $s=s_{\text{scaling}}$ is invariant under the scaling. Moreover, for a given integer $k \ge 2$, we define $s_\text{crit}$ by 
\begin{align}\label{eq_scrit}
	s_\text{crit} \deff \max(s_\text{scaling}, 0) = \max\bigg(2 - \frac{4}{k-1},  0\bigg),
\end{align}
where the restriction $s_{\text{crit}} \ge0$ appears in making sense of the powers of $u$. Although the scaling symmetry does not extend to $\T^4$, 
the numerology still plays an important role in predicting local well-posedness issues. In particular, our aim is to show that the 
SNLB \eqref{SNLB} is locally well-posed in the scaling (sub)critical Sobolev spaces $\H^s(\T^4)$ with $s \geq s_{\text{crit}}$. In fact, we show pathwise local well-posedness of \eqref{SNLB3} in the subcritical regime for $s>s_{\text{crit}}$ and all order nonlinearities $k\ge2$, and also in the critical case ($s=s_{\text{crit}}$) for $k\ge4$.

\begin{theorem}\label{THM:1}
	Given  an integer  $k \geq 2$, let $s_\textup{crit}$ be as in \eqref{eq_scrit}.
	Then, the Wick renormalized SNLB \eqref{SNLB3}
	is pathwise locally well-posed in $\H^s(\T^4)$
	for 
	\[ \textup{(i) } k \geq 4: 
	\ s \geq  s_\textup{crit}
	\qquad \text{or} \qquad 
	\textup{(ii) }  k = 2, 3: 
	\ s >  s_\textup{crit}.
	\]

	\noi
	More precisely, 
	given any $(u_0, u_1) \in \H^s(\T^4)$, 
	there exists an almost surely positive stopping time $T =  T( \o, u_0, u_1)>0$
	such that there exists  a unique solution $u $ to  \eqref{SNLB3} on $[0, T]$
	with $(u, \dt u)\vert_{t=0} = (u_0,u_1)$ and
	\begin{align*}
		u \in    \Psi + C([0, T]; H^{s'}(\T^4)) 
		\subset C([0, T]; H^{-\eps}(\T^4)) 
	\end{align*}
	
	\noi
	for any $\eps > 0$, where $s' = \min(s, 2-\eps)$.
\end{theorem}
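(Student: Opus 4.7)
The proof I have in mind uses the standard Da Prato--Debussche (first-order expansion) reduction: substitute $u = \Psi + v$, so that $v$ must solve \eqref{eq:v_wick} with the deterministic initial data $(u_0, u_1) \in \H^s(\T^4)$ (the stochastic convolution $\Psi$ has vanishing initial data), and work with the mild formulation
\begin{equation*}
v(t) = S(t)(u_0, u_1) \mp \int_0^t \frac{\sin((t-t')\Dl)}{\Dl} \sum_{\l=0}^k \binom{k}{\l} \Ws{\Psi^\l(t')}\, v^{k-\l}(t')\, dt'.
\end{equation*}
A Banach fixed point argument in a ball of a suitable Strichartz-type space on $[0, T] \times \T^4$ then produces $v \in C([0,T]; H^{s'})$ for a stopping time $T > 0$, with $s' = \min(s, 2-\eps)$; the upper cutoff at $2 - \eps$ is the highest Sobolev regularity that the beam Duhamel operator delivers from an $L^1_t L^2_x$ input.

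The analytic heart of the argument is a deterministic bound for the multilinear terms $\Ws{\Psi^\l}\, v^{k-\l}$ in the space dual to the smoothing of the beam Duhamel operator. I would combine Strichartz estimates for the fourth-order wave equation on $\T^4$ (whose dispersion relation $|n|^2$ yields Schr\"odinger-type Strichartz inequalities, possibly with the usual $\eps$-loss on the torus), the fractional Leibniz / Kato--Ponce inequality, and paraproduct estimates to multiply the high-regularity factor $v^{k-\l}$ against the distributional factor $\Ws{\Psi^\l}$, which lies almost surely in $C([0,T]; W^{-\eps,\infty}(\T^4))$ for every $\eps > 0$ by the renormalization results recalled in Subsection~\ref{SUBSEC:2a}. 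The $\eps$-loss arising from multiplication by a distribution of regularity $-\eps$ is then absorbed either into spare regularity (in the subcritical regime) or into smallness of Strichartz norms on short intervals (in the critical regime).

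In the subcritical case $s > s_\textup{crit}$ (for all $k \geq 2$), the gap $s - s_\textup{crit} > 0$ yields a positive power of $T$ after H\"older in time, so the mild map becomes a contraction on a ball of fixed radius for $T = T(\omega, u_0, u_1)$ sufficiently small, after conditioning on the almost sure event that the stochastic objects $\Ws{\Psi^\l}$ have finite $C([0,1]; W^{-\eps,\infty})$-norm. In the critical case $s = s_\textup{crit}$ with $k \geq 4$, I would run a Cazenave--Weissler-type argument in a critical Strichartz space whose norm on $[0,T]$ vanishes as $T \to 0^+$ for the linear evolution $S(t)(u_0, u_1)$ and for the stochastic forcing, and close the contraction by exploiting this smallness rather than a power of $T$. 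The exclusion of $k = 2, 3$ at $s = s_\textup{crit}$ reflects the fact that for those $k$ one has $s_\textup{crit} = 0$ only by the nonnegativity convention in \eqref{eq_scrit}, and the product estimate at $s = 0$ fails even at the deterministic level.

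Uniqueness in the stated class and continuous dependence on $(u_0, u_1)$ follow by applying the same multilinear estimates to the difference of two solutions. The principal technical obstacle I anticipate is the simultaneous calibration of Strichartz exponents in the critical case: the chosen space must (i) accommodate the unavoidable $\eps$-derivative loss from multiplication by the distribution-valued $\Ws{\Psi^\l}$, (ii) match the critical scaling $s_\textup{crit} = 2 - \tfrac{4}{k-1}$, and (iii) lie within the range of admissible Strichartz pairs for the four-dimensional beam equation on the torus; verifying that these three constraints are jointly compatible, with room to spare for a contraction, is where the real work lies.
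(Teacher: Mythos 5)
Your proposal follows essentially the same route as the paper: the first-order expansion $u = \Psi + v$, Strichartz estimates for the beam propagator deduced from the periodic Schr\"odinger ($\l^2$-decoupling) estimates, product estimates to pair $v^{k-\l}$ with the distributional $\Ws{\Psi^\l}$, and a contraction argument that gains a power of $T$ in the subcritical regime and, in the critical regime $k\ge 4$, exploits the smallness of the critical Strichartz norm of $S(t)(u_0,u_1)$ as $T\to 0$ via a two-norm ball (Cazenave--Weissler style), exactly as in Case 3 of Proposition~\ref{PROP:LWPv}. The one minor discrepancy is your diagnosis of the exclusion of $k=2,3$ at $s=s_{\textup{crit}}=0$: in the paper the obstruction is the failure of the sharp Strichartz estimate at the endpoint $p=3$ (see Remark~\ref{REM:LWP-sharp}), which forces either $q<3$ or $r<3$ when $s_{q,r}=0$, rather than a failure of the product estimate at $s=0$.
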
 

The solution $u$ in Theorem~\ref{THM:1} is understood as $u=\Psi +v$ where we construct $(v,\dt v) \in C([0,T]; \H^{s'}(\T^4))$ with $v$ solving the following Duhamel formulation:
\begin{align}
	v(t)= 
	S(t) (u_0, u_1)
	\mp \int_{0}^t  \frac{\sin ((t-t') \Dl )}{ \Dl}  \Ws {u^k(t')} dt',
	\label{SNLB9}
\end{align}
for $\Ws{u^k}$ and $S(t)$ as in \eqref{eq:wick} and \eqref{defS}, respectively.
The main ingredient in proving Theorem~\ref{THM:1} in the (almost) critical regime are the Strichartz estimates for the beam equation. 
In the Euclidean setting, by exploiting the formal decomposition 
$$\dt^2 +\Dl^2 = (i\dt + \Dl)(-i\dt +\Dl), $$
which sheds light on the relation between the beam equation and the Schr\"odinger equation, and the analysis of oscillatory integrals, Pausader \cite{P07, P10} established Strichartz estimates for the beam equation. However, in contrast to the wave equation, the lack of finite speed of propagation poses difficulties in transferring these estimates from the Euclidean to the periodic setting. Instead, we exploit the connection between the operator $S(t)$ in \eqref{defS} appearing in \eqref{SNLB9} and the free Schr\"odinger operators $e^{\pm i t\Dl}$ via the periodic Schr\"odinger Strichartz estimates in \cite{BD15, KV16} from the $\l^2$-decoupling theory. See Section~\ref{SEC:LWP} for details.

{

\begin{remark}
	\rm 
	(i) In Theorem~\ref{THM:1}, we cannot reach the critical regularity $s=s_{\text{crit}}=0$ for the quadratic and cubic SNLB \eqref{SNLB3}, $k=2,3$. This restriction comes from the sharp Strichartz estimates for Schr\"odinger (see Lemma~\ref{LEM:decoupling}), where the endpoint $p=3$ is not included, which is needed for our argument in the critical setting for $k=2,3$. 
	Strichartz estimates for $p=3$ are known to only hold with a derivative loss \cite{BO93, BD15}, which prevents us from taking $s=0$. Thus our result is sharp with respect to the method. It may be possible to reach the critical regularity in these cases by using the $U^p$-$V^p$ spaces introduced in \cite{KT05}.

	\smallskip
	
	\noi (ii) \label{REM:thm1} The proof of Theorem~\ref{THM:1} can be easily adapted to show local well-posedness of the truncated Wick-ordered SNLB \eqref{SNLB2}, uniformly in $N$. In fact, it follows that for $(u_0,u_1)\in \H^s(\T^4)$, there exists an almost surely positive stopping time $T=T(\o, u_0,u_1)>0$ independent of $N$ and a unique solution
	$u_N \in \Psi_N + C([0,T]; H^{s'}(\T^4))$ to \eqref{SNLB2}. Moreover, we can show that $u_N$ converges to the corresponding solution $u$ to \eqref{SNLB3}. 
	%This justifies the definition of the Wick-ordered nonlinearity $\Ws u^k$ in \eqref{eq:wick}.
We note that although this seems to depend on regularizing by $\pi_N$, one can consider a different regularization procedure, such as mollification. Indeed, one can show that the Wick-ordered monomials $\Ws{\Psi^k}$ are independent of the choice of mollifier, and thus so is the renormalized nonlinearity \eqref{eq:wick}. See \cite[Remark~1.2]{GKO} for further discussion.

\end{remark}

}

\medskip

Our next goal is to extend the solutions constructed in Theorem~\ref{THM:1} globally-in-time. We restrict our attention to the defocusing case (`$+$' sign in \eqref{SNLB}) and odd-ordered nonlinearities, as the energies corresponding to the deterministic NLB equation are sign definite in this setting. First, we construct pathwise global-in-time solutions for the cubic defocusing Wick-ordered \eqref{SNLB3} by adapting the hybrid method of Gubinelli-Koch-Oh-Tolomeo \cite{GKOT} to the beam equation. Then, we use Bourgain's invariant measure argument to show almost sure global well-posedness and invariance of the Gibbs measure for the defocusing damped Wick renormalized SNLB with odd-power nonlinearities.

\smallskip 

We first consider the cubic Wick renormalized SNLB \eqref{SNLB3} in the defocusing case, with $k=3$ and `$+$' sign. In Theorem~\ref{THM:1}, for $s>0$ we constructed a solution $u=\Psi +v$ where the remainder $v$ solves
\begin{equation}\label{SNLB6}
	\dt^2 v + \Dl^2 v + \Ws{u^3} =0 , 
\end{equation}
and $\Ws{u^3}$ is given in \eqref{eq:wick}. 
	A consequence of the (deterministic) contraction argument used to show Theorem~\ref{THM:1} is the following (almost sure) blow-up alternative: either the solution $v$ exists globally in time
	or there exists some finite time $T_* = T_*(\o) > 0$
	such that  
	\begin{align}
		\lim_{t \nearrow T_*}\| \vec v(t) \|_{\H^{s'}} = \infty,
		\label{BA}
	\end{align}
	
	\noi
	where $\vec v = (v, \dt v)$
	and $s' = \min(s, 2- \eps)$ for any small $\eps > 0$. 
	
	To globalize solutions, we must control the growth of the norm in \eqref{BA}. 
	In the parabolic setting, there are various results where deterministic arguments have been adapted to the stochastic setting to directly control the growth of norms of solutions; see \cite{MW1, MW2, GH, MoW}. Unfortunately, for \eqref{SNLB3}, due to the lack of a strong smoothing effect, such arguments do not apply. Instead, even in the deterministic setting, we must consider conservation laws.
	For the deterministic cubic nonlinear beam equation (NLB):
	\begin{equation*}
		\dt^2 v  + \Dl^2 v +  
		v^3  = 0,
%		\label{NLB2}
	\end{equation*}
	the associated energy
	\begin{align}
		E(\vec v ) = \frac{1}{2}\int_{\T^4} (\Dl v)^2 dx
		+ 
		\frac{1}{2}\int_{\T^4} (\dt v)^2dx
		+ \frac1{4} \int_{\T^4} v^{4} dx,
		\label{Hamil}
	\end{align}
	gives control over the $\H^2(\T^4)$-norm of $\vec v$, as this quantity is conserved for sufficiently regular solutions. 
	Unfortunately, when adding noise to the equation and considering a solution $v$ to \eqref{SNLB6}, two problems arise: (i) the energy $E(\vec v)$ is not conserved under the dynamics of \eqref{SNLB6}, and (ii) since $\vec v\in \H^{s'}(\T^4) \setminus \H^{2}(\T^4)$ for $s'=\min(s,2-\eps)$ for any $\eps>0$, the energy $E(\vec{v})$ is actually infinite. 
	
	In the context of the two-dimensional cubic stochastic nonlinear wave equation, Gubinelli-Koch-Oh-Tolomeo \cite{GKOT} introduced a new hybrid method to overcome these difficulties, by combining the $I$-method of Colliander-Keel-Staffilani-Takaoka-Tao~\cite{CKSTT1, CKSTT2} and the Gronwall-type globalization argument by Burq-Tzvetkov~\cite{BT2}. See also \cite{Forlano20, Tolomeo21} for other instances of this method. To establish our next main result, we adapt this argument to show pathwise global well-posedness of \eqref{SNLB6}.

\begin{theorem}\label{THM:GWP1}
	Let $s > \frac 74$.
	Then, the defocusing cubic Wick renormalized SNLB \eqref{SNLB6}
	is globally well-posed in $\H^s(\T^4)$.
	More precisely, 
	given any $(u_0, u_1) \in \H^s(\T^4)$, 
	the solution $v$ to the equation \eqref{SNLB6} 
	exists globally in time and
	$(v, \dt v) \in C(\R_+; \H^{s'}(\T^4))$, almost surely, for
	$s' = \min(s, 2- \eps)$ for any small $\eps > 0$.
\end{theorem}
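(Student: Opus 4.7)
My plan is to adapt the hybrid $I$-method/Gronwall approach of Gubinelli-Koch-Oh-Tolomeo to the defocusing cubic stochastic beam equation on $\T^4$. In view of the blow-up alternative \eqref{BA} following from Theorem~\ref{THM:1}, it suffices to establish an a priori bound on $\|\vec v(t)\|_{\H^{s'}}$ on any fixed interval $[0,T]$. Expanding the Wick-ordered nonlinearity via \eqref{eq:wick}, the equation \eqref{SNLB6} becomes
\begin{align*}
\dt^2 v + \Dl^2 v + v^3 + 3 v^2 \Psi + 3 v \Ws{\Psi^2} + \Ws{\Psi^3} = 0,
\end{align*}
where the stochastic forcings $\Psi, \Ws{\Psi^2}, \Ws{\Psi^3}$ are almost surely finite in $C([0,T]; W^{-\eps,\infty}(\T^4))$; these serve as the finite random data driving the Gronwall step.

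I would then introduce the $I$-operator $I = I_N$ as the smooth Fourier multiplier equal to the identity for $|n| \le N$ and equal to $(N/|n|)^{2-s}$ for $|n| \ge 2N$, so that $I: H^s(\T^4) \to H^2(\T^4)$ is bounded with operator norm $\lesssim N^{2-s}$ and $I \to \Id$ as $N \to \infty$. Define the modified energy
\begin{align*}
\mathcal{E}(\vec v)(t) \deff \tfrac12 \|\Dl Iv(t)\|_{L^2}^2 + \tfrac12 \|\dt Iv(t)\|_{L^2}^2 + \tfrac14 \|Iv(t)\|_{L^4}^4,
\end{align*}
which is finite for $\vec v \in \H^{s'}(\T^4)$ and controls $\|I\vec v\|_{\H^2}$ from below. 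Applying $I$ to the equation, multiplying by $\dt Iv$ and integrating by parts, a direct computation yields
\begin{align*}
\frac{d}{dt}\mathcal{E}(\vec v)
&= \int_{\T^4} \dt Iv \cdot \bigl[(Iv)^3 - I(v^3)\bigr] dx - 3 \int_{\T^4} \dt Iv \cdot I(v^2 \Psi) dx \\
&\quad - 3 \int_{\T^4} \dt Iv \cdot I(v \Ws{\Psi^2}) dx - \int_{\T^4} \dt Iv \cdot I\Ws{\Psi^3} dx.
\end{align*}
The first term is the standard $I$-method commutator, to be bounded by multilinear dyadic/paraproduct estimates by a factor $N^{-\beta(s)}$ (with $\beta(s) > 0$ when $s > 7/4$) times a polynomial in $\mathcal{E}(\vec v)$. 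The three stochastic terms are handled by duality against the $L^2$-factor $\dt Iv$: each rough factor $\Ws{\Psi^\l}$ is absorbed into its $W^{-\eps,\infty}$-norm (which is finite a.s.\ and in every $L^p(\O)$), while the $v$-factors are estimated via Sobolev embeddings and fractional product rules as powers of $\mathcal{E}(\vec v)$.

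Collecting these estimates should produce a differential inequality of the schematic form
\begin{align*}
\frac{d}{dt}\mathcal{E}(\vec v)(t) \le A(\o, T)\bigl(1 + \mathcal{E}(\vec v)(t)\bigr)^\theta
\end{align*}
for some $\theta < 1$ and an almost surely finite random constant $A(\o, T)$, after optimizing $N$. A Gronwall-type argument then yields $\mathcal{E}(\vec v)(t) \le C(\o, T, \|(u_0, u_1)\|_{\H^s})$ on $[0, T]$, which through the operator-norm bound on $I$ translates into an a priori bound on $\|\vec v(t)\|_{\H^{s'}}$, precluding the blow-up in \eqref{BA}. The main obstacle is producing the commutator and stochastic-product estimates with a sharp $N$-loss: in particular, the mixed terms $v^2 \Psi$ and $v \Ws{\Psi^2}$ cannot be interpreted pointwise and will require Bony-type paraproduct decompositions combined with the $\ell^2$-decoupling Strichartz bounds used in Section~\ref{SEC:LWP}. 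The threshold $s > 7/4$ is dictated by balancing the gain $N^{-\beta(s)}$ from the commutator against the loss $N^{2-s}$ incurred when inverting $I$ to recover $\|\vec v\|_{\H^{s'}}$; this trade-off becomes favorable precisely for $s > 7/4$ and appears unavoidable within this method.
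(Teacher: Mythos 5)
Your overall strategy (first-order expansion, $I$-operator, modified energy increment, hybrid $I$-method/Gronwall) is the same as the paper's, and the energy identity you write down matches \eqref{E1}. However, the schematic differential inequality $\frac{d}{dt}\mathcal E \le A(1+\mathcal E)^\theta$ with $\theta<1$ is not what the estimates produce, and the argument as described would not close. The commutator term contributes $N^{-2+3(2-s)}E^2$ and the mixed terms contribute $N^{-\frac{1-k(2-s)}{2}+\dl}E^{\frac{k+1}{2}}$ for $k=1,2$; these are \emph{superlinear} in $E$ and are only harmless while one runs a bootstrap $E\le N^{\al}$ with $\al\le\min(3s-4,2s-3)$, under which the negative powers of $N$ render them effectively linear. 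A single fixed $N$ "optimized" at the start cannot work on all of $[0,T]$: the energy grows from $N^\be$ to $N^\al$ on a short interval of length $\tau$, and one must then \emph{re-initialize with a larger parameter} $N_{k+1}=N_0^{\s^{k+1}}$, using $E(I_{N_{k+1}}\vec v)\les N_{k+1}^{2(2-s)}E(I_{N_k}\vec v)+E(I_{N_k}\vec v)^2\ll N_{k+1}^\be$. The constraint $2(2-s)<\be<\al\le 2s-3$ from this iteration is exactly what forces $s>\frac74$; your heuristic for the threshold is in the right spirit but the actual mechanism is this re-initialization loss, not the inversion of $I$ at a single scale.

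The more serious gap is the term $\int(\dt Iv)(Iv)^2\, I\Psi\,dx$. Absorbing $\Psi$ into its $W^{-\eps,\infty}$-norm and estimating the $v$-factors by powers of $\mathcal E$ yields, at best, a bound of size $E^{3/2}$ with \emph{no} gain in $N$ (there is no commutator structure here and $\dt Iv$ sits only in $L^2$, so you cannot afford to move derivatives onto it). A Gronwall inequality $\frac{d}{dt}E\les E^{3/2}$ blows up in finite time, so this term cannot be handled as you propose. The paper's resolution (Lemma~\ref{LEM:log} together with Lemma~\ref{LEM:C4}(ii)) is essential: $I_N\Psi(t,x)$ is Gaussian with variance $\les t\log N$, hence has exponential moments of size $e^{ct\log N}$, which gives $\|I_N\Psi\|_{L^n_{t,x}}\les n$ for $n\sim T\log N$; pairing this with the factor $E^{\frac{1+\eta}{1-2\eta}}$, $\eta=n^{-1}$, converts the term into $E\log E$ (since $\log E\sim\log N$ under the bootstrap), and the resulting logarithmic Gronwall inequality yields the double-exponential bound of Remark~\ref{REM:bound} rather than a sublinear one. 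Without this finer input on $I\Psi$, the argument does not close. The paraproduct/Strichartz machinery you invoke for the mixed terms is not what is used (nor needed) here; the paper handles them with the elementary commutator estimates of Lemmas~\ref{LEM:C1}--\ref{LEM:C3}.
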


We briefly detail the ideas of the proof of Theorem~\ref{THM:GWP1}. For simplicity, let $\frac74<s<2$ so that $s'=s$. In view of the blow-up alternative \eqref{BA}, our main goal is to control the $H^s(\T^4)$-norm of the solution $v$ to \eqref{SNLB6}, where the conservation of $E(\vec v)$ is not useful. Instead, the $I$-method is based on studying the growth of a modified energy obtained from $E(\vec v)$ which controls the $H^s(\T^4)$-norm of $v$. In particular, for $N\in\NB$, we consider $E (I \vec{v})$ where $I=I_N$ denotes the $I$-operator, a Fourier operator with a smooth, radially symmetric, non-increasing multiplier $m_N$ given by
\begin{equation}
	m_N(\xi)=
	\begin{cases}
		1, & \text{if } |\xi|\le N, \\
		\Big(\frac{N}{|\xi|}\Big)^{2-s}, & \text{if } |\xi|\ge2N.
	\end{cases}
	\label{I0a}
\end{equation} 
Note that $If \in H^2(\T^4)$ if and only if $f \in H^s(\T^4)$; see \eqref{I1}-\eqref{I2}.%, thus estimating $E(I \vec v)$ allows us to control the $H^s$-norm of $v$.

To study the growth of $E(I \vec v)$, we consider the following $I$-SNLB:
\begin{equation*}
	\dt^2 I v  + \Dl^2 I v +  
	I \Ws{u^3}= 0,
%	\label{SNLB8}
\end{equation*}
where $\Ws{u^3}$ is as in \eqref{eq:wick}.
Unfortunately, the modified energy $E(I\vec v)$ is not conserved under the flow of $I$-SNLB, %\eqref{SNLB8},
 and by direct computation we obtain
\begin{equation}
\begin{aligned}
	E(I\vec v)(t_2) -  E(I\vec v)(t_1) 
	&  = \int_{t_1}^{t_2} \int_{\T^4} (\dt I v) \big\{ - I (v^3) + (Iv)^3\big\} dx dt'  \\
	& \hphantom{X}
	-3 
	\int_{t_1}^{t_2}\int_{\T^4} (\dt I v)  \big\{ I(v^2 \Psi)  +    I(v \Ws{  \Psi^2  })  \big\}  dx dt'  \\
	& \hphantom{X}
	- 
	\int_{t_1}^{t_2} \int_{\T^4} (\dt I v) I(\Ws{  \Psi^3  } ) dx dt' ,
\end{aligned}\label{E1}
\end{equation}
for $0\le t_1<t_2$.
The first term, due to the $I$-operator, requires a certain (deterministic) commutator estimate; see Lemma~\ref{LEM:C1}. The difficulty in the remaining contributions comes from the roughness of $\Psi$, which is handled by exploiting a finer regularity property of $I\Psi$ combined with commutator estimates and a Gronwall-type argument. Finally, due to the growth of the modified energy $E(I_N \vec v)$, we iterate the argument above over time-intervals of fixed size, but with an increasing sequence $N_k$ of parameters for the $I$-operator, extending the solution to \eqref{SNLB6} globally-in-time. See Section \ref{SEC:GWP1} for details.

\begin{remark}\rm
	
(i) There is a gap between the global well-posedness result for the Wick-ordered cubic SNLB \eqref{SNLB6} in Theorem \ref{THM:GWP1} for $s> \frac74$ and the local well-posedness threshold $s > 0$ from Theorem \ref{THM:1}. 
It may be possible to refine the $I$-method part of the argument to lower the regularity assumptions. However, we do not pursue this issue in this paper.

\smallskip

\noi (ii) At this point, we do not know how to prove pathwise global well-posedness of the Wick ordered defocusing SNLB \eqref{SNLB3} with a (super-)quintic nonlinearity. This may be possible for a smoother noise, by using ideas in \cite{OP16, Latocca21} to deal with the high degree of nonlinearity. See also \cite{MPTW} for SNLB on $\T^3$, where the authors proved pathwise global well-posedness in a super-cubic setting.

\smallskip

\noi(iii)
	A standard application of the $I$-method results in a polynomial growth bound (in time) on the Sobolev norm of a solution. 
	See, for example, \cite[Section 6]{CKSTT2}.
	The hybrid argument used for Theorem~\ref{THM:GWP1} yields
	a double exponential growth bound on the $\H^s$-norm of the solution; see Remark \ref{REM:bound} below. It may be possible to improve this double exponential bound, but we expect that one can obtain at best a polynomial growth bound for SNLB \eqref{SNLB6} due to the polynomial growth (in time) of the stochastic convolution $\Psi$. One can compare this situation with the damped case in the next subsection, where the invariant measure argument yields a logarithmic growth bound; see Remark \ref{RMK:inv}(i) below.

%\smallskip
%
%	\noi
%(ii) 
%Note that our argument for proving Theorem \ref{THM:GWP1} is completely deterministic. 
%One can compare this argument with the one in a recent work \cite{CLO}, 
%where the second author with Cheung and Oh
%used $I$-method along with Ito's lemma and a stopping time argument
%to prove global well-posedness of
%stochastic nonlinear Schr\"odinger equations
%below the energy space. 

\end{remark}

\smallskip

Lastly, 
 we restrict our attention to the following (defocusing) stochastic damped nonlinear beam equation (SdNLB):
\begin{align}
	\dt^2 u + \dt u + (1 -  \Dl)^2  u   + u^k = \sqrt 2\xi
	\label{SdNLB}
\end{align}

\noi
for $k \in 2 \NB + 1$. By modifying the proof of Theorem~\ref{THM:GWP1}, we can show global well-posedness for the damped dynamics \eqref{SdNLB} when $k=3$, after renormalization, but we do not know how to extend this deterministic argument to higher nonlinearities. Instead, we consider a probabilistic approach and establish almost sure global well-posedness of \eqref{SdNLB} and invariance of the Gibbs measure $\vec \rho$ via Bourgain's invariant measure argument \cite{BO94, BO96}, where $\vec \rho$ is formally given by
\begin{align}
	\text{``}d\rhoo (u,  \dt u   ) = Z^{-1}e^{-E(u,\dt u )}du\,d(\dt u)\text{"}.
	\label{Gibbs1}
\end{align}
Here, $E(u, \dt u)$ denotes the energy (or Hamiltonian) of the deterministic undamped defocusing NLB \eqref{NLB1}:
\begin{align} 
	E(u, \dt u ) = \frac{1}{2}\int_{\T^4} [( 1-  \Dl) u]^2 dx
	+ 
	\frac{1}{2}\int_{\T^4} (\dt u)^2dx
	+ \frac1{k+1} \int_{\T^4} u^{k+1} dx.
	\label{Hamil2}
\end{align}
%
%
%In particular, we use Bourgain's invariant measure argument, a well-established approach that has been developed over the past decades to Hamiltonian PDEs.
%See, for instance, \cite{BO94, BO96, GKOT, LO22, ORTz}. 

We can understand SdNLB \eqref{SdNLB} as a superposition of the defocusing NLB dynamics~\eqref{NLB1}  and the 
Ornstein-Uhlenbeck dynamics %process 
(for the  component $\dt u$):
\begin{align*}
	\dt (\dt u) = - \dt u + 
	\sqrt 2 dW.
\end{align*}

\noi The latter leaves the Gibbs measure $\rhoo$ invariant, which is also expected to hold under the dynamics of NLB \eqref{NLB1} due to its Hamiltonian structure; see \cite{OST} and  \cite[Chapter 3]{XB16}. Therefore, we expect $\rhoo$ to also be invariant under SdNLB \eqref{SdNLB}.

Moreover, this invariance is also inferred from the stochastic quantization viewpoint. In fact, \eqref{SdNLB} is the so-called canonical stochastic quantization equation %(or the Hamiltonian stochastic quantization equation) 
of the $\Phi^{k + 1}_4$-model; see~\cite{RSS}. We thus refer to \eqref{SdNLB} as the hyperbolic $\Phi_4^{k + 1}$-model, which is of importance in constructive quantum field theory. The invariance of the Gibbs measure is also related to other applications in physics such as the study of equilibrium states, couplings of fields, and scattering of particles; see \cite{SS79, AZ82, AG83, GK85, FMRS87, HT86,  AD21} and references therein. See also \cite{GKO, Tolomeo20, GKOT, ORTz, LTW} for further results on wave-like $\Phi_d^{k+1}$-models.

Our first step is to rigorously construct the measure $\rhoo$, since the expression in \eqref{Gibbs1} is only formal. We want to define $\rhoo$ as a weighted Gaussian measure of the form
\begin{align}\label{Gibbs2}
	\text{``}d\rhoo  (u, \dt u) = Z^{-1} e^{
		- \frac1{k+1} \int_{\T^4} u^{k+1} dx} 
	d\muu_2 (u, \dt u)\text{''},
\end{align} 
where $\muu_2 = \mu_2 \otimes \mu_0$ and $\mu_s$ denotes a Gaussian measure on periodic distributions given by
\begin{align}
	d \mu_s 
	= Z_s^{-1} e^{-\frac 12 \| u\|_{{H}^{s}}^2} du
	& =  Z_s^{-1} \prod_{n \in \Z^4} 
	e^{-\frac 12 \jb{n}^{2s} |\ft u(n)|^2}   
	d\ft u(n),
	\label{gauss0}
\end{align}
for $s\in\R$. Note that $\mu_0$ corresponds to the white noise measure. More precisely, $\muu_2$ is defined as the induced probability measure
under the map $
	\o \in \O \longmapsto 
	(X^1(\o), X^2(\o)),$
where $X^1(\o)$ and $X^2(\o)$ are given by
\begin{equation}\label{series}
	X^1(\o) = \sum_{n \in \Z^4} \frac{g_n(\o)}{\jb{n}^2}e_n
	\qquad\text{and}\qquad
	X^2(\o) = \sum_{n \in \Z^4} h_n(\o)e_n.
\end{equation}
\noi
Here, 
$\{g_n,h_n\}_{n\in\Z^4}$ denotes  a family of independent standard complex-valued  Gaussian random variables conditioned so that $\cj{g_n}=g_{-n}$ and $\cj{h_n}=h_{-n}$, 
$n \in \Z^4$. The main difficulty in making sense of \eqref{Gibbs2} comes from the rough support of the base Gaussian measure $\muu_2$, namely $\H^{-\eps}(\T^4) \setminus \H^0 (\T^4)$
for any $\eps > 0$; see \cite[Lemma B.1]{BT1}. Since the typical element in the support of $\muu_2$ is merely a distribution, the term $\int_{\T^4} u^{k+1} \,dx $ in \eqref{Gibbs2} is ill-defined and a renormalization is needed in rigorously constructing $\rhoo$.

Similarly to the local theory for SNLB \eqref{SNLB}, where we introduced a renormalization based on the logarithmically diverging variance of $\Psi$ in \eqref{sig1}, here the same difficulty appears due to the roughness of the support of $\muu_2$. In fact, for $N\in\NB$, the typical element $X^1$ in the support of $\mu_2$ satisfies
\begin{align}\label{sN}
	\al_N \deff  \E\big[\big(\pi_N  X^1(x)\big)^2\big] 
	=\sum_{\substack{n\in\Z^4\\|n|\leq N}}\frac{1}{\jb{n}^4}
	\sim \log N ,
\end{align}
which is independent of both $t\in\R_+$ and $x\in\T^4$.
We then define the Wick renormalized truncated potential energy
	\begin{align}
	R_N(u) = 
	\frac{1}{k+1} 
	\int_{\T^4} \Wa{   (\pi_N u)^{k+1} } dx,
	\label{R1}
\end{align}

\noi
where the Wick-ordered power $\Wa{(\pi_N u)^{k+1}} $ is defined by  
\begin{align}
	\Wa{  (\pi_N u)^{k+1}(t,x) }
	\, \deff  H_{k+1} (\pi_N u(t,x); \al_N).
	\label{Herm2}
\end{align}

\noi
One can show that $\{R_N \}_{N \in \NB}$
forms a Cauchy sequence in $L^p(\mu_2)$
for any finite $p \geq 1$, from which we conclude that there exists a limiting random variable 
$R(u)$ given by
\begin{align}
	\lim_{N\to\infty} R_N(u) \deff R(u) = 
	\frac{1}{k+1} 
	\int_{\T^4} 
	\Wa{u^{k+1}(x) } dx.
	\label{R2}
\end{align}

%such that 
%\begin{equation}\label{exp2}
%	\lim_{N\rightarrow\infty} R_N(u)=R(u)
%	\qquad \text{in } L^p(\mu_2);
%\end{equation}

\noi
See \cite[Proposition~1.1]{OT1} and \cite[Proposition~3.4]{LOZ} 
for details. We then construct the Gibbs measure $\rhoo$ as the limit of the following truncated Gibbs measures
\begin{align}\label{GibbsN}
	d\rhoo_N(u,\dt u )= Z_N^{-1}e^{-R_N(u)}d\muu_2(u,\dt u).
\end{align}

	\begin{proposition}
	\label{PROP:meas}
	Given any $1 \leq  p < \infty$, we have
	\begin{equation}
		\lim_{N\rightarrow\infty} e^{ -R_N(u)} = e^{-R(u)}
		\qquad \text{in } L^p( \mu_2).
		\label{Lp_conv}
	\end{equation}
	
	\noi
	Consequently, the truncated Gibbs measure $\rhoo_N$ in \eqref{GibbsN} converges, in the sense of \eqref{Lp_conv}, to a limiting Gibbs measure $\rhoo$ given by
	\begin{align}
		d\rhoo(u,\dt u) = Z^{-1} e^{-R(u)}d\muu_2(u,\dt u).
		\label{Gibbs3}
	\end{align}
\end{proposition}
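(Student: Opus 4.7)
The plan is to deduce the $L^p(\mu_2)$-convergence \eqref{Lp_conv} from two inputs: convergence of $R_N(u)$ to $R(u)$ in every $L^p(\mu_2)$, and a uniform exponential integrability bound $\sup_N \|e^{-R_N(u)}\|_{L^q(\mu_2)} \lesssim_q 1$ for every $1 \le q <\infty$. Granted both, the elementary estimate $|e^{-x}-e^{-y}| \le (e^{-x}+e^{-y})|x-y|$ combined with Cauchy--Schwarz yields
\begin{align*}
\bigl\|e^{-R_N}-e^{-R_M}\bigr\|_{L^p(\mu_2)} \le \bigl\|e^{-R_N}+e^{-R_M}\bigr\|_{L^{2p}(\mu_2)} \bigl\|R_N-R_M\bigr\|_{L^{2p}(\mu_2)},
\end{align*}
so $\{e^{-R_N(u)}\}$ is Cauchy in $L^p(\mu_2)$ with $\mu_2$-almost sure subsequential limit $e^{-R(u)}$. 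The concluding statement on $\rhoo_N \to \rhoo$ then follows by observing that the partition functions $Z_N = \int e^{-R_N(u)} d\mu_2(u)$ converge to a strictly positive $Z$, so the normalized densities with respect to $\muu_2$ converge in $L^p(\muu_2)$.

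For the first input, I would note that $R_N(u)-R_M(u)$ lies in the inhomogeneous Wiener chaos of order at most $k+1$ generated by $\mu_2$. Nelson's hypercontractive estimate then reduces $L^p$-norms to $L^2$-norms, and the resulting $L^2$-norm is computed explicitly using orthogonality of Wick powers of different orders together with the Fourier-side variance $\E |\widehat{X^1}(n)|^2 = \jb{n}^{-4}$; a direct calculation shows that it decays polynomially in $\min(N,M)$. Both steps (including the existence of the limit $R(u)$ in \eqref{R2}) are essentially already recorded in \cite[Proposition~1.1]{OT1} and \cite[Proposition~3.4]{LOZ}, to which I would defer.

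The main obstacle is the uniform Nelson bound. Since $k$ is odd and therefore $k+1$ is even, expanding $H_{k+1}(\,\cdot\,;\alpha_N)$ in \eqref{Herm2} gives
\begin{align*}
\Wa{(\pi_N u)^{k+1}} \;=\; (\pi_N u)^{k+1} \;+\; \sum_{j=1}^{(k+1)/2} c_{k,j}\,\alpha_N^j\,(\pi_N u)^{k+1-2j},
\end{align*}
a nonnegative leading term together with lower-order corrections. Absorbing each correction into the leading term via Young's inequality yields the pointwise deterministic lower bound
\begin{align*}
R_N(u) \;\ge\; \tfrac{1}{2(k+1)} \int_{\T^4} (\pi_N u)^{k+1}\, dx \;-\; C\bigl(1 + \alpha_N^{(k+1)/2}\bigr).
\end{align*}
Because $\alpha_N \to \infty$ by \eqref{sN}, this deterministic bound alone is insufficient, and it must be combined with a large-deviation estimate showing that the event $\{ \int_{\T^4} (\pi_N u)^{k+1} dx \ll \alpha_N^{(k+1)/2} \}$ has $\mu_2$-probability decaying faster than $e^{-qC\alpha_N^{(k+1)/2}}$. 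I would obtain the latter by a Bourgain--Nelson scheme: decompose $\pi_N u$ into dyadic Littlewood--Paley pieces, apply Gaussian concentration on each piece together with the logarithmic variance growth from \eqref{sN}, and sum over dyadic scales. This step is technical but is by now classical in the hyperbolic $\Phi^{k+1}_d$-model literature; I would model it directly on the arguments in \cite{OT1} and the references cited in the introduction, which transpose essentially verbatim to the present four-dimensional setting thanks to the two orders of smoothing in the base Gaussian $\mu_2$.
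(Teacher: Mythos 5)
Your overall architecture coincides with the paper's: both arguments rest on (a) the $L^p(\mu_2)$-convergence of $R_N$ to $R$ and (b) the uniform exponential integrability $\sup_{N}\|e^{-R_N}\|_{L^q(\mu_2)}<\infty$, and both defer the proofs of (a) and (b) to \cite{OT1, LOZ}. The only difference in how you combine them is cosmetic: the paper invokes ``uniform $L^q$-bound plus convergence in measure implies $L^p$-convergence'', while you use $|e^{-x}-e^{-y}|\le (e^{-x}+e^{-y})|x-y|$ and H\"older; both are correct.

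The genuine gap is in your sketch of (b), which you single out as the main obstacle. The good-event/bad-event split you propose cannot close. For the good-event contribution to $\E[e^{-qR_N}]$ to be bounded you need $R_N\ge 0$ there, so in view of your deterministic bound the good event must be $\{\int_{\T^4}(\pi_N u)^{k+1}dx\ge 2(k+1)C\,\al_N^{(k+1)/2}\}$ with the \emph{same} constant $C$. But Wick ordering forces $\E[R_N]=0$, and taking expectations in your deterministic bound then gives $\E\big[\int_{\T^4}(\pi_N u)^{k+1}dx\big]\le 2(k+1)C\,\al_N^{(k+1)/2}$: the threshold defining your ``bad'' event sits at or above the mean of $\int_{\T^4}(\pi_N u)^{k+1}dx$ (which equals $k!!\,\al_N^{(k+1)/2}$ and around which this quantity concentrates with fluctuations of lower order $\al_N^{(k-1)/2}$). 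Hence the bad event has $\mu_2$-probability of order one, not $\les e^{-qC\al_N^{(k+1)/2}}$, and shrinking the threshold to make the event rare simultaneously destroys the sign of $R_N$ on the good event. More bluntly, any argument of this shape would show $R_N\ge 0$ off a set of vanishing measure, hence $R\ge 0$ a.s., which together with $\E[R]=0$ would force $R\equiv 0$ -- false, since $R$ has a nontrivial chaos component of order $k+1$. The correct Nelson/Bourgain argument (as in \cite[Proposition 1.2]{OT1}) is structured differently: one bounds the tail $\mu_2(R_N<-\ld)$ by inserting a $\ld$-dependent cutoff $N_0(\ld)$ with $(\log N_0)^{\frac{k+1}{2}}\sim\ld$, so that the deterministic lower bound makes $\{R_{N_0}<-\ld/2\}$ empty, and then controls $\mu_2(|R_N-R_{N_0}|>\ld/2)$ by Chebyshev with a $\ld$-dependent moment $p$ via the Wiener-chaos/hypercontractivity estimate; no lower-deviation bound for the potential energy enters. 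With that mechanism substituted for your large-deviation step, the rest of your proof goes through.
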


We now sketch the proof of Proposition~\ref{PROP:meas}. From an application of Nelson's estimate, we obtain uniform in $N$ integrability of the truncated density; 
for any $1\le p <\infty$, 
\begin{equation}
	\sup_{N\in \NB} \big\| e^{-R_N(u)} \big\|_{L^p( \mu_2)} <\infty.
	\label{exp1}
\end{equation}
See, for example, \cite[Proposition 1.2]{OT1} and \cite[Proposition 3.6]{LOZ}. Combining the uniform bound \eqref{exp1} with a convergence in measure deduced from \eqref{R2}, we obtain \eqref{Lp_conv}; see, for example, \cite[Remark 3.8]{Tzv08} and \cite[(3.32)]{LOZ}.
	This allows us to construct the Gibbs measure $\rhoo$ in \eqref{Gibbs3}, which is mutually absolutely continuous with respect to the base Gaussian measure~$\muu_2$.

	We can now consider the dynamical problem for the 
 $\Phi^{k+1}_4$-model \eqref{SdNLB}. In particular, we consider the following truncated Wick renormalized SdNLB
	\begin{align}
		\dt^2 u_N   + \dt u_N  +(1-\Dl)^2  u_N 
		+
		\pi_N\big(    \Wa{ (\pi_N u)^{k} } \big) 
		= \sqrt{2} \xi ,
		\label{SNLB10}
	\end{align} 
	and 
	show almost sure global well-posedness and invariance of the Gibbs measure $\rhoo$ for the limiting equation:
	\begin{align}
		\dt^2 u   + \dt u  +(1-\Dl)^2  u 
		+
		\Wa{u^k} 
		= \sqrt{2} \xi  .
		\label{SNLB11}
	\end{align}

	\begin{theorem}\label{THM:2-old}
		Let $k \in 2 \NB + 1$. 
		 The Wick renormalized SdNLB~\eqref{SNLB11} is almost surely globally well-posed with respect to the Gibbs measure~$\rhoo$ in~\eqref{Gibbs3} and the Gibbs measure $\rhoo$ is invariant under the dynamics. More precisely, there exists a non-trivial stochastic process $(u,\dt u)\in C(\R_+;\H^{-\eps}(\T^4))$ for any $\eps>0$ such that, given any $T>0$, the solution $(u_N,\dt u_N)$ to 
		the renormalized truncated SdNLB~\eqref{SNLB10} with 
		random initial data  
		$(u_N, \dt u _N)|_{t = 0}$ distributed according 
		to the truncated Gibbs measure $\rhoo_N$  in~\eqref{GibbsN}, converges in probability to 
		some stochastic process $(u,\dt u)$ in $C([0,T];\H^{-\eps}(\T^4))$.
		Moreover,  the law of $(u(t),\dt u(t))$ is given by the renormalized Gibbs measure $\rhoo$ in \eqref{Gibbs3}
		for any $t\ge 0$.
	\end{theorem}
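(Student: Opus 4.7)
\smallskip

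\noindent
\textbf{Proof proposal.}
The plan is to follow Bourgain's invariant measure argument \cite{BO94, BO96} adapted to the stochastic damped setting, as in \cite{GKO, Tolomeo20, GKOT}. The argument proceeds in four main steps: (i) uniform-in-$N$ local well-posedness of the truncated equation \eqref{SNLB10}; (ii) invariance of the truncated Gibbs measure $\rhoo_N$ under the truncated dynamics; (iii) almost sure globalization using $\rhoo_N$-invariance together with a Borel--Cantelli argument; and (iv) passage to the limit $N\to\infty$ using Proposition~\ref{PROP:meas}.

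First, I would introduce the damped stochastic convolution $\Psid$ solving the linear damped beam equation $\dt^2 \Psid + \dt \Psid + (1 - \Dl)^2 \Psid = \sqrt{2}\xi$, and its truncation $\Psid_N = \pi_N \Psid$. Arguing as in Theorem~\ref{THM:1} (the damping and Helmholtz symbol $(1-\Dl)^2$ do not affect the bilinear/multilinear Strichartz analysis, only the propagator, which still obeys the same dispersive estimates up to exponentially decaying factors), one obtains pathwise local well-posedness of \eqref{SNLB10} and \eqref{SNLB11} in $\H^{-\eps}(\T^4)$ via the decomposition $u_N = \Psid_N + v_N$, with the local existence time $T=T(\|\vec v_N(0)\|_{\H^{s'}},\o)$ uniform in $N$ on sets where the relevant stochastic objects $\Ws{(\Psid_N)^\ell}$ are controlled in $L^p(\Omega;C([0,T];W^{-\eps,\infty}))$. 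As in Remark~\ref{REM:thm1}, the truncated solution $u_N$ converges to $u$ as $N\to\infty$ on random time intervals.

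Second, and this is the main technical step, I would establish invariance of $\rhoo_N$ under \eqref{SNLB10}. Following \cite{OST, Tolomeo20}, one splits \eqref{SNLB10} into a low-frequency part on $E_N = \text{span}\{e_n : |n|\le N\}$ and a high-frequency part on $E_N^\perp$. On $E_N^\perp$ the dynamics reduce to the linear damped stochastic beam equation, which is a linear SDE with Gaussian invariant measure coinciding with the restriction of $\muu_2$ to $E_N^\perp$. On $E_N$, the system is a finite-dimensional Langevin-type SDE which can be written as the superposition of (a) a Hamiltonian flow generated by the truncated Hamiltonian
\[
E_N(u,\dt u) = \tfrac12\|(1-\Dl)\pi_N u\|_{L^2}^2 + \tfrac12\|\pi_N \dt u\|_{L^2}^2 + \tfrac{1}{k+1}\int_{\T^4}\Wa{(\pi_N u)^{k+1}}dx,
\]
which preserves $Z_N^{-1} e^{-E_N}\, du\, d(\dt u)$ by Liouville's theorem combined with energy conservation; and (b) the Ornstein--Uhlenbeck dynamics $d(\dt u) = -\dt u\, dt + \sqrt 2\, dW$, which preserves the Gaussian marginal $\mu_0$ on $\dt u$. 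A direct computation (or Ito's formula applied to $e^{-E_N}$) verifies that the generator of the full finite-dimensional system annihilates $e^{-E_N}$ in the sense of $L^2$-adjoints, yielding invariance of the truncated Gibbs measure $\rhoo_N$ on $E_N$, and hence of $\rhoo_N$ under \eqref{SNLB10} as a whole.

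Third, given the invariance of $\rhoo_N$, one applies Bourgain's argument. Fix $T>0$ and $\eta>0$. By the local theory of Step 1 and a standard large-deviation estimate (using $\sup_N\|e^{-R_N}\|_{L^p(\mu_2)}<\infty$ from \eqref{exp1}), there exists $R=R(T,\eta)\sim(\log(T/\eta))^{1/2}$ and an event $\Sigma_{N,T,\eta}$ with $\rhoo_N$-probability $\ge 1-\eta$ on which the solution $u_N$ to \eqref{SNLB10} exists on $[0,T]$ with $\|\vec u_N\|_{L^\infty([0,T];\H^{-\eps})}\le R$; here one iterates local well-posedness on intervals of length $\tau = \tau(R)$, using the invariance of $\rhoo_N$ at each step so that the initial data at time $j\tau$ is again distributed according to $\rhoo_N$, and a union bound over $\sim T/\tau$ steps. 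Combined with the uniform bound from Proposition~\ref{PROP:meas}, this yields corresponding $\rhoo$-almost sure globalization for the limiting equation \eqref{SNLB11} via Fatou. Finally, the convergence $u_N\to u$ in probability in $C([0,T];\H^{-\eps})$ follows from the uniform local theory and the $L^p$-convergence of the densities in \eqref{Lp_conv}, and invariance of $\rhoo$ follows from taking $N\to\infty$ in the identity $(u_N(t))_\ast \rhoo_N = \rhoo_N$. The main obstacle will be executing Step (ii) rigorously, i.e., verifying that the truncated dynamics exactly preserves $\rhoo_N$ despite the Wick renormalization, which requires care in identifying the Hamiltonian structure of the projected nonlinear component $\pi_N\Wa{(\pi_N u)^k}$ and in controlling the non-smooth base measure $\muu_2$.
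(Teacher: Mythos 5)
Your proposal is correct and follows essentially the same route as the paper: uniform-in-$N$ local theory via the first-order expansion around the damped stochastic convolution $\Psid$, invariance of $\rhoo_N$ by splitting into the linear high-frequency dynamics (preserving $\vec\mu_{2,N}^\perp$) and the finite-dimensional low-frequency system decomposed into a Hamiltonian flow (Liouville plus conservation of the Wick-renormalized truncated energy) superposed with an Ornstein--Uhlenbeck process, followed by Bourgain's globalization and the limit $N\to\infty$ via Proposition~\ref{PROP:meas}. The only cosmetic difference is that the paper handles the damping in the local theory by the substitution $w=e^{t/2}v$ and treats the resulting lower-order terms perturbatively, and takes $(v_N,\dt v_N)|_{t=0}=(0,0)$ since the Gaussian data is absorbed into $\Psid$, but these do not change the argument.
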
 

	By using Bourgain's invariant measure argument, due to the convergence of $\rhoo_N$ to $\rhoo$, Theorem~\ref{THM:2-old} follows once we construct the limiting process $(u,\dt u)$ locally-in-time
	with a good approximation property for
	the solution $u_N$ to \eqref{SNLB10} and establish invariance of the truncated measures $\rhoo_N$ under \eqref{SNLB10}. The former follows from adapting the proof of Theorem~\ref{THM:1} to the damped models \eqref{SNLB10}-\eqref{SNLB11}, while the latter exploits the Hamiltonian structure of the truncated system \eqref{SNLB10}. See Section \ref{SEC:GWP2} for details.
	
	\begin{remark}\rm

		\noi(i) \label{RMK:inv}
		Let $(u, \dt u)$ be the limiting process constructed in Theorem \ref{THM:GWP1}. Then, as a consequence of Bourgain's invariant measure argument, one can obtain the following logarithmic growth bound (in time):
		\begin{align*}
			\| (u(t), \dt u (t)) \|_{\H^{- \eps}} \leq C(\o) \big( \log (1 + t) \big)^{\frac{k}{2}}
		\end{align*}
		
		\noi
		for any $t \geq 0$. For details, see~\cite{ORTz}.
		
		\smallskip
		
	\noi (ii) The local well-posedness in Theorem~\ref{THM:1} can be easily adapted to the Wick renormalized SNLB with damped massive linear part $\dt^2 u + \dt u + (1-\Dl)^2u$, which we detail in Section~\ref{SEC:GWP2}. We choose to consider the massive linear part $(1-\Dl)^2$ instead of $\Dl^2$ to avoid a problem at the zero-th frequency when constructing the Gibbs measure $\rhoo$, as in \cite{OT1, GKOT}.

	\smallskip
	
	\noi(iii) The Gaussian measure $\mu_2$ is the log-correlated Gaussian free field
	on $\T^4$ studied in \cite{OST}, and thus
	the SdNLB dynamics \eqref{SdNLB} are associated with this log-correlated Gibbs measure.
	Our construction of $\rhoo$ in~\eqref{Gibbs3} is
	valid for $k\in 2\NB+1$ and with a plus sign in front of the potential energy in \eqref{Hamil2}. However, in the case of a focusing quartic interaction (i.e.,~with a minus sign in front of the potential energy and $k=3$ in \eqref{Hamil2}), the authors in \cite{OST} obtained a non-normalizability result for the corresponding measure and established its exact divergence rate; see \cite[Theorem 1.4]{OST}.
	\end{remark}

\section{Preliminaries}
\label{SEC:2}

In this section, we introduce notations and recall basic lemmas.
For $a, b > 0$, we use $a \les b$ to denote that there exists a constant $C > 0$ such that $a \leq Cb$. We write $a \sim b$ if $a \les b$ and $b \les a$. When writing the norm of a space-time function, we usually use short-hand notation, such as $L_I^q L_x^r = L^q (I; L^r (\T^4))$ for a given time interval $I \subset \R_+$.

\subsection{Deterministic tools}
We first introduce some function spaces.
For $s \in \R$, we define the $L^2$-based Sobolev space $H^s(\T^4)$ via the norm:
\begin{align*}
\| f \|_{H^s}= \big\| \jb{n}^s \ft f (n) \big\|_{\l_n^2},
\end{align*}
where $\jb{\cdot}=(1+|\cdot|^2)^\frac12$ and $\ft{f}$ denotes the spatial Fourier transform of $f$.
For $1 \leq p \leq \infty$, we define the $L^p$-based Sobolev space $W^{s, p} (\T^4)$ via the norm:
\begin{align}\label{Wsp}
\| f \|_{W^{s, p}} = \big\| \F^{-1} \big( \jb{n}^s \ft f (n) \big) \big\|_{L^p},
\end{align}

\noi
where $\F^{-1}$ denotes the inverse Fourier transform. Note that $H^s (\T^4) = W^{s, 2} (\T^4)$.

%We now recall the definition of Besov spaces. 
We now introduce notation for Littlewood-Paley projections. Let $\phi : \R \to [0, 1]$ be a smooth bump function with $\supp\phi\subset[-\frac 85, \frac 85]$ and $\phi \equiv 1$ on $[-\frac 54, \frac 54]$. For $\xi \in \R^4$, we define
\begin{align*}
	\varphi_1 (\xi) = \phi(|\xi|) \qquad \text{and} \qquad
\varphi_{N} (\xi) = \phi \Big( \frac{|\xi|}{N} \Big) - \phi \Big( \frac{2|\xi|}{N} \Big)
\end{align*}

\noi
for $N \geq 2$ a dyadic number. For a dyadic number $N \geq 1$, we define the Littlewood-Paley projector $\P_N$ as the Fourier multiplier operator with the symbol $\varphi_N$. Then,
\begin{align*}
f = \sum_{N \geq 1 \text{ dyadic}} \P_N f.
\end{align*}

\noi
We also write
\begin{align*}
\P_{\leq N} f = \sum_{1 \leq M \leq N \text{ dyadic}} \P_M f.
\end{align*}

%\noi
%Given $s \in \R$ and $1\le p, q \le \infty$, 
%the Besov space $B^s_{p,q}(\T^4)$ is defined via the following norm
%	\begin{equation*}
%		\|f\|_{B^s_{p,q}} = \bigg( \sum_{N \geq 1 \text{ dyadic}} N^{sq} \|\P_N f \|^q_{L^p} \bigg)^{1/q}.
%	\end{equation*} 
%
%\noi
%Note that for $r \geq 2$, by the Littlewood-Paley theorem and Minkowski's inequality, we have
%\begin{align}
%\| f\|_{L^r_x} \les \big\| \| \P_N f \|_{\l_N^2} \big\|_{L_x^r} \les \big\| \|\P_N f \|_{L^r_x} \big\|_{\l^2_N} = \| f \|_{B^0_{r,2}}.
%\label{LrBr2}
%\end{align}

\medskip
Next, we recall the following Christ-Kiselev lemma. For a proof, see \cite{ChristKiselev, HassellTaoWunsch}.
\begin{lemma}
\label{LEM:CK}
	Let $X,Y$ be Banach spaces and $K(s,t): X \to Y$ be an operator-valued kernel from $X$ to $Y$. Suppose that we have the estimate
	$$\bigg\| \int_{-\infty}^{t_0} K(s,t) f(s) \, ds \bigg\|_{L^q([t_0, \infty); Y)}  \les \| f\|_{L^p(\R;X)},$$
	for some $ 1 \leq p < q \leq \infty$, all $t_0\in\R$, and all $f\in L^p\big((-\infty, t_0); X\big)$. Then, we have
	$$\bigg\| \int_{-\infty}^t K(s,t) f(s) \, ds \big\|_{L^q(\R; Y)} \les \|f\|_{L^p(\R;X)}. $$
\end{lemma}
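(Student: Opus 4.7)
The plan is to adapt the classical Christ-Kiselev argument, using a dyadic decomposition of $\R$ adapted to the $L^p$-mass of $f$. By scaling I may normalize $\|f\|_{L^p(\R;X)} = 1$, and by a routine approximation I may assume that $F(t) \deff \int_{-\infty}^t \|f(s)\|_X^p\, ds$ is continuous and strictly increasing from $0$ to $1$ on $\R$. For each $n \geq 0$ and $1 \leq k \leq 2^n$, I define the dyadic intervals $I_{n,k} \deff F^{-1}((\tfrac{k-1}{2^n}, \tfrac{k}{2^n}])$, so that $\int_{I_{n,k}} \|f(s)\|_X^p\, ds = 2^{-n}$ and $\{I_{n,k}\}_{k=1}^{2^n}$ forms a partition of $\R$, which is refined at the next level via $I_{n,m} = I_{n+1,2m-1} \sqcup I_{n+1,2m}$.

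The key geometric observation, essentially reading off the binary expansion of $F(t)$, is that for each $t \in \R$ the half-line $\{s < t\}$ decomposes (up to a null set) as the disjoint union, over those levels $n \geq 1$ for which $t$ lies in an even-indexed interval $I_{n,2m}$, of the adjacent left-neighbour $I_{n,2m-1}$. This yields the pointwise identity
\begin{equation*}
\int_{-\infty}^t K(s,t) f(s)\, ds = \sum_{n \geq 1} F_n(t), \qquad F_n(t) \deff \sum_{m=1}^{2^{n-1}} \mathbf{1}_{I_{n,2m}}(t) \int_{I_{n,2m-1}} K(s,t) f(s)\, ds.
\end{equation*}
For each fixed pair $(n,m)$, applying the hypothesis with $t_0$ the common endpoint of $I_{n,2m-1}$ and $I_{n,2m}$ to the input $f \cdot \mathbf{1}_{I_{n,2m-1}}$ gives
\begin{equation*}
\bigg\| \int_{I_{n,2m-1}} K(s,t) f(s)\, ds \bigg\|_{L^q(I_{n,2m}; Y)} \les \|f \cdot \mathbf{1}_{I_{n,2m-1}}\|_{L^p(\R; X)} = 2^{-n/p}.
\end{equation*}
Since the intervals $\{I_{n,2m}\}_m$ are pairwise disjoint, raising to the $q$-th power and summing in $m$ yields $\|F_n\|_{L^q(\R;Y)}^q \les 2^{n-1} \cdot 2^{-nq/p}$, hence $\|F_n\|_{L^q} \les 2^{-n(1/p - 1/q)}$.

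The last step is the triangle inequality applied to $\sum_n F_n$ in $L^q(\R; Y)$: since $p < q$, the geometric series $\sum_n 2^{-n(1/p - 1/q)}$ converges, which gives the claimed bound with implicit constant proportional to $(1 - 2^{-(1/p - 1/q)})^{-1}$. I expect the main (small) obstacle to be the combinatorial bookkeeping that identifies the decomposition of $\{s < t\}$ with the dyadic tree built from $F$; the endpoint case $q = \infty$ requires only cosmetic modifications, replacing the sum over $m$ by a supremum and using the hypothesis as an $L^\infty$-bound.
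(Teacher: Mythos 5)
Your argument is correct and is precisely the classical Christ--Kiselev proof: the paper itself gives no proof of this lemma, deferring to \cite{ChristKiselev, HassellTaoWunsch}, and those references use exactly the dyadic decomposition of $\R$ adapted to the distribution function $F(t)=\int_{-\infty}^t\|f(s)\|_X^p\,ds$, the binary-expansion identification of $\{s<t\}$ with the union of left-siblings of the even-indexed intervals containing $t$, the application of the hypothesis at the common endpoint $t_0$ of each sibling pair, and the summable geometric series $\sum_n 2^{-n(1/p-1/q)}$ coming from $p<q$. The only points deserving a word of care are the routine reduction to $F$ continuous and strictly increasing (plateaus of $F$ carry no mass of $f$, so they are harmless) and the fact that the resulting constant degenerates like $(1-2^{-(1/p-1/q)})^{-1}$ as $q\downarrow p$, which is consistent with the known failure of the lemma at $p=q$.
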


	Note that the assumption in the above lemma is satisfied in particular if we have
	$$\bigg\| \int_\R K(s,t) f(s) \, ds \bigg\|_{L^q(\R;Y)} \les \|f\|_{L^p(\R;X)}.$$

\medskip
Lastly, we recall the following product estimates. See for example \cite[Lemma 3.4]{GKO}.
\begin{lemma}
\label{LEM:GKOlbnz}
	Let $0\leq s \leq 1$.
	
	\noi {\rm(i)} Suppose that $1<p_j,q_j,r < \infty$, $\frac{1}{p_j} + \frac{1}{q_j} = \frac{1}{r}$, $j=1,2$. Then, we have
		\begin{equation*}
		%\label{leibniz}
			\| \jb{\nabla}^s (fg) \|_{L^r(\T^4)} \les \Big( \|f\|_{L^{p_1}(\T^4)} \|\jb{\nabla}^s g\|_{L^{q_1}(\T^4)} + \|\jb{\nabla}^s f \|_{L^{p_2}(\T^4)} \| g \|_{L^{q_2}(\T^4)} \Big).
		\end{equation*}
	
	\noi {\rm(ii)} Suppose that $1< p,q,r<\infty$ satisfy the scaling condition $\frac1p + \frac1q = \frac1r + \frac{s}{4}$. Then, we have
	\begin{equation*}
	%\label{leibniz_duality}
		\| \jb{\nabla}^{-s} (fg) \|_{L^r(\T^4)} \les \| \jb{\nabla}^{-s} f \|_{L^p(\T^4)} \| \jb{\nabla}^s g \|_{L^q(\T^4)}.
	\end{equation*}
\end{lemma}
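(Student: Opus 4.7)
The plan is to prove both estimates via Littlewood--Paley paraproduct decomposition. Part (i) is the Kato--Ponce fractional Leibniz rule on $\T^4$, and part (ii) is its dual-type counterpart under a scaling condition.

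For part (i), I would decompose
$$fg = \sum_{N} \bigl[(\P_{\leq N/4} f)(\P_N g) + (\P_N f)(\P_{\leq N/4} g)\bigr] + \sum_{N \sim M} (\P_N f)(\P_M g),$$
splitting into low-high, high-low, and high-high (diagonal) interactions. In the low-high piece, the product $(\P_{\leq N/4} f)(\P_N g)$ is frequency-localized to $\{|\xi| \sim N\}$, so $\jb{\nabla}^s$ acts as multiplication by $\sim N^s$ on it. Applying H\"older with exponents $(p_1, q_1)$ inside and the Littlewood--Paley square function characterization of $L^r$ (valid for $1 < r < \infty$), the resulting dyadic sum is controlled by $\|f\|_{L^{p_1}} \|\jb{\nabla}^s g\|_{L^{q_1}}$. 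The high-low piece is symmetric and produces the second term. For the high-high piece, I would distribute $\jb{\nabla}^s$ onto the factor with the larger frequency and sum using Cauchy--Schwarz in the dyadic indices together with the square function estimate.

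For part (ii), I would argue by duality: writing $r'$ for the H\"older conjugate of $r$,
$$\|\jb{\nabla}^{-s}(fg)\|_{L^r} = \sup_{\|h\|_{L^{r'}} \leq 1} \Bigl|\int_{\T^4} fg\,\jb{\nabla}^{-s}h\,dx\Bigr|.$$
Setting $G = \jb{\nabla}^{-s} h$, so that $\|\jb{\nabla}^s G\|_{L^{r'}} = \|h\|_{L^{r'}} \leq 1$, the problem reduces to the tri-linear bound $|\langle fg, G\rangle| \les \|\jb{\nabla}^{-s} f\|_{L^p} \|\jb{\nabla}^s g\|_{L^q} \|\jb{\nabla}^s G\|_{L^{r'}}$. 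A further paraproduct decomposition of the product $gG$, combined with H\"older's inequality and Bernstein's inequality used to trade regularity for integrability on the low-frequency factor, yields the claim; the scaling relation $\tfrac1p + \tfrac1q = \tfrac1r + \tfrac{s}{4}$ is precisely what makes the resulting dyadic sums converge in dimension four.

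The main obstacle is the high-high interaction in part (ii), where both inputs sit at comparable frequencies $\sim N$ but the product may concentrate at much lower output frequency, so the multiplier $\jb{\nabla}^{-s}$ provides no direct decay; one must instead use Bernstein's inequality to convert the loss of integrability at low frequency into the needed gain of $N^{-s}$, and the scaling condition becomes sharp for convergence of the dyadic sum. The open-endpoint assumptions $1 < p_j, q_j, r < \infty$ in (i) and $1 < p, q, r < \infty$ in (ii) are needed for both the Littlewood--Paley square function estimate and the boundedness of the Bessel potential multipliers $\jb{\nabla}^{\pm s}$.
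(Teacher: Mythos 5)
The paper does not actually prove this lemma: it is imported from the literature (``see for example \cite[Lemma 3.4]{GKO}''), so there is no in-paper argument to compare against. Your paraproduct proof is precisely the standard argument underlying that citation, and the outline is sound: part (i) is the Kato--Ponce/Coifman--Meyer low-high, high-low, high-high decomposition (with the $s=0$ case being plain H\"older), and part (ii) reduces, by duality or directly, to a product estimate of type (i) combined with Sobolev embedding, the scaling condition $\frac1p+\frac1q=\frac1r+\frac s4$ being exactly the Sobolev numerology in dimension four.

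One step in (ii) needs more care than your sketch suggests, namely the high-high piece that you correctly single out as the main obstacle. If you decompose the \emph{output} frequency dyadically and apply Bernstein at each output scale $K \les N$, the gain $\jb{K}^{-s}$ from the multiplier exactly cancels the loss $K^{s}$ from Bernstein, so each output scale contributes a constant and the sum over $K \les N$ produces a factor $\log N$ that you cannot absorb. The clean route is not a scale-by-scale Bernstein bound but the global Sobolev mapping property $\| \jb{\nabla}^{-s} h \|_{L^r(\T^4)} \les \| h \|_{L^{\tilde r}(\T^4)}$ with $\frac{1}{\tilde r} = \frac{1}{r} + \frac{s}{4}$ (valid for $1<\tilde r\le r<\infty$), applied to the whole product $\P_N f\, \wt\P_M g$, $N\sim M$, at once; here $\tilde r$ is exactly the H\"older exponent of $(p,q)$, the powers $N^{s}$ and $M^{-s}$ from converting $\|\P_N f\|_{L^p}$ and $\|\P_M g\|_{L^q}$ into the stated norms cancel, and the remaining single sum over $N$ is handled by Cauchy--Schwarz and the square-function estimate. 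With that adjustment your argument is complete in outline and matches the proofs given in the cited references.
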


\subsection{Tools from stochastic analysis}
\label{SUBSEC:2a}

In the following, we first review some basic facts on Hermite polynomials. 
See,  for example,  \cite{Kuo, Nualart06}.
We define the $k$-th Hermite polynomial
$H_k(x; \s)$ with variance $\s>0$ via the following generating function:
\begin{equation}
%F(t, x; \s) : =  
e^{tx - \frac{1}{2}\s t^2} = \sum_{k = 0}^\infty \frac{t^k}{k!} H_k(x;\s) 
\label{H1b}
 \end{equation}
 
\noi
for $t, x \in \R$.
We list the first few Hermite polynomials
for readers' convenience:
\begin{align*}
& H_0(x; \s) = 1, 
\qquad 
H_1(x; \s) = x, 
\qquad
H_2(x; \s) = x^2 - \s, \\
& H_3(x; \s) = x^3 - 3\s x, 
\qquad 
H_4(x; \s) = x^4 - 6\s x^2 +3\s^2.
\end{align*}

 \noi
From \eqref{H1b}, we obtain
the following identities for any $k \in \NB$ and $x,y\in\R$:
\begin{align}
H_k(x+y; \s )  &= 
\sum_{\l = 0}^k
\binom{k}{\l}
 x^{k - \l} H_\l(y; \s).
\label{Herm3-1}
%\\
%\frac{d}{dx} H_k(x;\s) & = k H_{k-1}(x;\s). \label{Herm-deriv}
\end{align}

We now recall the regularities of the stochastic convolutions and their Wick-powers introduced in Section~\ref{SEC:intro}. Let $\Psi$
be the stochastic convolution
defined in \eqref{defPsi} and $\Psid$ be the stochastic convolution associated with SdNLB \eqref{SdNLB}, namely the solution to the linear stochastic damped beam equation:
\begin{align}\label{SdLB}
	\begin{cases}
		\dt^2 \Psid + \dt\Psid +(1-\Dl)^2 \Psid  = \sqrt{2}\xi\\
		(\Psid,\dt\Psid)|_{t=0}=(u_0^\o, u_1^\o).
	\end{cases}
\end{align}
with initial data with law $\L(u_0^\o, u_1^\o) = \muu_2$. See Section~\ref{SEC:GWP2} for further details on $\Psid$.
Then, using standard stochastic analysis with the Wiener chaos estimate, 
we have the following regularity and convergence result.
For an analogous proof, we refer the readers to~\cite[Lemma 2.1]{LO22}. See also \cite{GKO, GKOT}.

\begin{lemma}
\label{LEM:sto_cov}

Let 
 $Z = \Psi$ or  $\Psid$, 
$\l \in \NB$, $T>0$, and $1\le p<\infty$.
For $ \W{ Z_N^\l }
\, = \, \W {(\pi_N Z)^\l }$
denoting the truncated Wick power defined
in \eqref{Herm1} or \eqref{Herm2}, respectively.
 Then, $\{  \W{ Z^\l_N}  \}_{N\in\NB}$ is a Cauchy sequence in $L^p\big(\Omega; C([0,T];W^{-\eps, \infty}(\T^4)) \big)$. Moreover, denoting the limit by $\W{Z^\l}$,
 we have $\W{Z^\l } \in C\big([0,T]; W^{-\eps,\infty}(\T^4)\big)$ almost surely, with the following tail estimate for any $1 \le q <\infty$, $T\ge1$, and $\ld>0$:
\begin{align*}
P\Big( \| \W {Z^\l } \|_{L^q_T W^{-\eps, \infty}_x} > \ld\Big) 
\leq C\exp\bigg(-c \frac{\ld^{\frac{2}{\l}}}{T^{1 + \frac{2}{q \l}}}\bigg).
%\label{P0}
\end{align*}

\noi
When $q = \infty$, we  also have 
the following tail estimate:
\begin{align}
P\Big( \|\W{ Z^\l }\|_{L^\infty ([j, j+1]; W^{-\eps, \infty}_x)}> \ld\Big) 
\leq C\exp\bigg(-c \frac{\ld^{\frac{2}{\l}}}{j+1}\bigg)
\label{P0z}
\end{align}

\noi
for any $j \in \Z_{\ge 0}$ and $\ld > 0$.

%
% 
% \smallskip
%
%\noi
%\textup{(iii)}
%When $q = \infty$ and $k= 1$, we  have 
%\begin{align}
%P\Big( \| Z \|_{L^\infty ([0, T]; W^{-\eps, \infty}_x)}> \ld\Big) 
%\leq CT \exp\bigg(-c \frac{\ld^2\eps}{T}\bigg)
%\label{P0z2}
%\end{align}
%
%\noi
%for any $T\geq 1$ and $\ld > 0$.

\end{lemma}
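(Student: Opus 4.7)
The plan is to apply the standard Wiener-chaos / hypercontractivity machinery, following closely the template in \cite{GKO, GKOT, LO22}. For $Z=\Psi$ or $\Psid$, the field $Z_N(t,x)=\pi_N Z(t,x)$ is, at each fixed $(t,x)$, a mean-zero real Gaussian whose variance is independent of $x$ by translation invariance, and by the generating-function identity \eqref{H1b} the Wick power $\W{Z_N^\l}(t,x)$ lies in the $\l$-th inhomogeneous Wiener chaos generated by the underlying Gaussian family (the Brownian motions $\{\be_n\}$ in the case of $\Psi$, and the analogous family for $\Psid$).

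The core input is an $L^2(\O)$ bound for each Littlewood--Paley piece $\P_K\W{Z_N^\l}$. Using the Hermite orthogonality identity
\[
\E\bigl[H_\l(X;\s_X)H_\l(Y;\s_Y)\bigr]=\l!\,\bigl(\E[XY]\bigr)^{\l}
\]
for centred jointly Gaussian $X,Y$ with $\s_X=\E[X^2]$, $\s_Y=\E[Y^2]$, one computes on the Fourier side
\[
\E\bigl[\widehat{\W{Z_N^\l}}(t,m)\,\overline{\widehat{\W{Z_N^\l}}(t,m')}\bigr]=\delta_{m,m'}\,\l!\sum_{\substack{n_1+\cdots+n_\l=m\\|n_j|\le N}}\prod_{j=1}^{\l}K_{n_j}(t,t),
\]
where $K_n(t,t)=\E|\widehat Z(t,n)|^2$. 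The Ito isometry gives $K_n(t,t)\les t\jb{n}^{-4}$ for $Z=\Psi$, and an analogous kernel with at least as much decay for $Z=\Psid$. The resulting $\l$-fold convolution, restricted to the dyadic band $|m|\sim K$, is bounded by a power of $K$ times a power of $t$. Nelson's hypercontractivity (the Wiener-chaos estimate) then upgrades this $L^2(\O)$ bound, for $p\ge 2$, to
\[
\bigl\|\P_K\W{Z_N^\l}(t,x)\bigr\|_{L^p(\O)}\les (p-1)^{\l/2}\bigl\|\P_K\W{Z_N^\l}(t,x)\bigr\|_{L^2(\O)}.
\]
Taking $p$ large so that $W^{\eps/2,p}(\T^4)\hookrightarrow L^\infty(\T^4)$ costs only $\eps/2$ derivatives, and summing the dyadic pieces weighted by $K^{-\eps}$, yields $L^p(\O;W^{-\eps,\infty})$ bounds uniformly in $N$ and pointwise in $t$.

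To promote these pointwise-in-$t$ bounds to bounds in $C([0,T];W^{-\eps,\infty})$, I would repeat the second-moment computation on the time increment $\W{Z_N^\l}(t)-\W{Z_N^\l}(s)$, using the H\"older regularity of the underlying Green's function (for $\Psi$, $|\sin((t-s)|n|^2)|\les(|t-s||n|^2)^\theta$ for any $\theta\in[0,1]$; the damped propagator satisfies an analogous estimate) to extract a factor $|t-s|^\theta$ at the cost of a mild polynomial-in-$K$ loss. Kolmogorov's continuity criterion then produces a H\"older-in-$t$ modification with quantitative $L^p(\O)$ control. Running the same programme with $Z_M-Z_N$ in place of $Z_N$ for $M>N$ restricts the kernels to Fourier support $|n|>N$, giving a decay factor $N^{-\delta}$ and hence the Cauchy property in $L^p(\O;C([0,T];W^{-\eps,\infty}))$; the limit $\W{Z^\l}$ inherits all the preceding estimates.

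Finally, the tail estimates reduce via Chebyshev's inequality to the $L^p(\O)$-moments established above. Tracking the $T$-dependence carefully through the preceding steps produces a bound of the form $\|\W{Z^\l}\|_{L^p(\O;L^q_TW^{-\eps,\infty})}\les C\,p^{\l/2}\,T^{\l/2+1/q}$; optimising in $p$ then yields the claimed sub-Gaussian-type tail with exponent $2/\l$ and $T$-power $1+2/(q\l)$. For the $q=\infty$ case, the same argument is applied on each unit interval $[j,j+1]$, with the factor $j+1$ arising from the linear-in-$t$ growth of the pointwise variance $\s_N(t)\sim t\log N$ of $Z_N$. The main analytic obstacle is the precise $\l$-fold Fourier-convolution estimate and the bookkeeping of $t$- and $K$-dependence needed to extract the sharp tail exponents; once these are in hand, the remainder is a routine application of the Wiener-chaos/hypercontractivity framework, essentially as in the cited references.
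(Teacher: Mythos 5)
Your outline follows exactly the route the paper intends: the paper gives no proof of this lemma, deferring to \cite{LO22, GKO, GKOT}, and those references proceed precisely as you do — second-moment bounds on each Littlewood--Paley piece via the Hermite orthogonality/Ito isometry computation, upgrade to $L^p(\O)$ by the Wiener chaos (hypercontractivity) estimate, Sobolev embedding into $L^\infty_x$, time-increment estimates plus Kolmogorov continuity for the $C_t$ and Cauchy-in-$N$ statements, and Chebyshev with optimization in $p$ for the sub-Gaussian tails. The proposal is correct and takes essentially the same approach as the paper's cited proof.
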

%
%Part (iii) of this lemma in particular plays an important role
%in the proof of Theorem~\ref{THM:2}.

In order to prove  Theorem \ref{THM:GWP1}, 
we need the following finer regularity property of $ \Psi$.
For a proof, see \cite[Lemma 2.4]{GKOT} .

\begin{lemma} 
\label{LEM:log}
Let $\Psi$ be as in \eqref{defPsi}
and fix $ 0 < s < 2$.
Then, given any $x \in \T^4$ and $t \in \R_+$, 
$I\Psi(t, x)$ is a mean-zero Gaussian random variable
with variance bounded by $C_0 t \log N$, 
where the constant $C_0$ 
is independent of  $x \in \T^4$ and $t \in \R_+$.

\end{lemma}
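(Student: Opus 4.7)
The plan is to compute the variance of $I\Psi(t,x)$ explicitly using the It\^o isometry and then exploit the decay of the multiplier $m_N$ from \eqref{I0a} to exhibit the logarithmic dependence on $N$. Combining \eqref{defPsi} with the definition of $I = I_N$ as the Fourier multiplier with symbol $m_N$, I would first write
\[
I\Psi(t,x) = \sum_{n \in \Z^4} m_N(n) e_n(x) \int_0^t \frac{\sin((t-t')|n|^2)}{|n|^2} \, d\beta_n(t'),
\]
with the convention $\sin(t\cdot 0)/0 = t$ at $n = 0$. Since $m_N$ is radial and $\beta_{-n} = \overline{\beta_n}$, this expression is real-valued, and as a stochastic integral of a deterministic integrand against a family of Brownian motions, $I\Psi(t,x)$ is a mean-zero Gaussian random variable.

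Next, I would apply the It\^o isometry mode-by-mode, using $|e_n(x)|^2 = 1$, to obtain
\[
\E\bigl[|I\Psi(t,x)|^2\bigr] = \sum_{n \in \Z^4} \frac{m_N(n)^2}{|n|^4} \int_0^t \sin^2\bigl((t-t')|n|^2\bigr) \, dt',
\]
which is manifestly independent of $x$. For $n \neq 0$, the identity $\sin^2 y = (1-\cos 2y)/2$ gives $\int_0^t \sin^2((t-t')|n|^2) \, dt' \leq t/2 + |n|^{-2}/2$. I would then split the frequency sum into three pieces according to the shape of $m_N$: on $\{|n| \le N\}$ where $m_N \equiv 1$, the sum $\sum_{0 < |n| \le N} |n|^{-4} \sim \log N$ on $\Z^4$ yields the dominant contribution $\les t\log N$; on $\{|n| \ge 2N\}$ where $m_N(n)^2 = (N/|n|)^{2(2-s)}$, one has $N^{2(2-s)} \sum_{|n| \ge 2N} |n|^{-4-2(2-s)} \sim 1$, contributing only $O(t)$; and the transition region $N < |n| < 2N$ together with the $n=0$ mode (whose time integral is $t^3/3$) are absorbed as lower-order terms in the target bound $C_0 t \log N$.

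The only delicate point is the high-frequency estimate, which crucially uses the hypothesis $s < 2$: it ensures $2(2-s) > 0$ so that the $N^{2(2-s)}$ prefactor is exactly matched by the tail of the power sum $\sum_{|n|\ge 2N} |n|^{-4-2(2-s)} \sim N^{-2(2-s)}$, producing a uniform-in-$N$ bound. This matching is precisely what makes $I\Psi$ inherit the same log-variance structure as $\Psi_N$ itself (cf.\ \eqref{sig1}), despite $\Psi$ being merely a distribution. Beyond this observation, the argument is a routine It\^o isometry calculation coupled with a three-region mode decomposition, and there is no substantial conceptual obstacle.
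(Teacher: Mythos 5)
Your approach is the right one and, for the nonzero modes, it is exactly the computation behind this result: the paper gives no proof here but cites \cite[Lemma 2.4]{GKOT}, whose argument is precisely the It\^o isometry followed by splitting the frequency sum according to the three regimes of $m_N$ in \eqref{I0a}. Your treatment of $0<|n|\le N$, of the transition annulus, and of $|n|\ge 2N$ (where the cancellation $N^{2(2-s)}\cdot N^{-2(2-s)}\sim 1$ indeed hinges on $s<2$) is correct and consistent with \eqref{sig1}. One small point: by bounding $\int_0^t\sin^2((t-t')|n|^2)\,dt'\le \tfrac t2+\tfrac12|n|^{-2}$ you pick up an additive constant $\tfrac12\sum_{n\ne0}|n|^{-6}<\infty$, which is not of the form $C_0\,t\log N$ when $t$ is small; just use the trivial bound $\int_0^t\sin^2(\,\cdot\,)\,dt'\le t$ for every $n\ne0$, which is all you need.

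The genuine issue is the $n=0$ mode, which you identify but dismiss too quickly. Its contribution to the variance is $m_N(0)^2\int_0^t(t-t')^2\,dt'=t^3/3$, and this is \emph{not} lower order relative to $t\log N$: it dominates as soon as $t\gtrsim(\log N)^{1/2}$, so it cannot be absorbed into a bound $C_0\,t\log N$ with $C_0$ independent of $t\in\R_+$. This is not something you can patch inside your computation; it is a mismatch between the lemma as stated and the massless propagator $\sin((t-t')\Delta)/\Delta$ with the convention $\sin(t\cdot 0)/0=t$ in \eqref{defPsi}. In \cite{GKOT} the propagator is massive, so the zero mode enters through a bounded kernel and no cubic term appears; the cited proof does not transfer verbatim. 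To get the stated bound one must either work with noise projected off the zero frequency or treat $\ft{\Psi}(t,0)$ separately (note that the $t^3$ growth would otherwise also spoil the summability in \eqref{K4}, where the lemma is applied for arbitrarily large times). You should flag this explicitly rather than claim the term is absorbed.
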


%\input{stochastic.tex}

%----------------------------------------------------------------------------------------------------------------------------

\section{Local well-posedness of SNLB}
\label{SEC:LWP}
In this section, we show pathwise local well-posedness of the Wick renormalized SNLB \eqref{SNLB3} in Theorem \ref{THM:1}. 
In Subsection~\ref{SUBSEC:str}, we show homogeneous and inhomogeneous Strichartz estimates for the linear beam operators appearing in the mild formulation \eqref{SNLB9}. We then apply these in Subsection~\ref{SUBSEC:LWP} to show Theorem~\ref{THM:1} via a contraction mapping argument.

\subsection{Strichartz estimates}
\label{SUBSEC:str}
To obtain Strichartz estimates for the beam equation, we need the following sharp Strichartz estimates for the linear Schr\"odinger propagator $e^{\pm it\Dl}$ due to Bourgain-Demeter \cite{BD15} and Killip-Vi\c{s}an \cite{KV16}.
\begin{lemma}
\label{LEM:decoupling}
	Let $3 < p\leq\infty$ and $ N\ge1$ be a dyadic integer. Then, we have
\begin{align*}
\big\| e^{\pm it\Delta} \P_{\leq N} f \|_{L^p_{t,x}([0,1]\times \T^4)} \les N^{2-\frac6p} \|f\|_{L^2_x(\T^4)}.
\end{align*}
\end{lemma}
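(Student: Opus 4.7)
This is a standard periodic Schr\"odinger Strichartz estimate; I would obtain it by combining a Bernstein bound at $p=\infty$ with the $\ell^2$-decoupling theorem of Bourgain-Demeter and the scale-invariant refinement of Killip-Vi\c{s}an.

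At the endpoint $p=\infty$, I would observe that $\P_{\leq N} f$ has spatial Fourier support in $\{|n|\leq N\}\subset\Z^4$, which contains $O(N^4)$ lattice points. Applying Cauchy-Schwarz on the Fourier side (equivalently, Bernstein's inequality) after using that $e^{\pm it\Dl}$ is an $L^2$-isometry yields
$$
\big\| e^{\pm it\Dl} \P_{\leq N} f \big\|_{L^\infty_x(\T^4)} \les N^{2} \|f\|_{L^2_x(\T^4)}
$$
uniformly in $t\in[0,1]$, which gives the claimed bound with constant $N^{2}=N^{2-6/\infty}$.

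For $3<p<\infty$, I would appeal to the $\ell^2$-decoupling theorem of Bourgain-Demeter applied to the paraboloid in $\R^{5}$. In spatial dimension $d=4$ the critical decoupling exponent is $\tfrac{2(d+2)}{d}=3$, so the range $p>3$ is strictly supercritical. The standard reduction, rescaling $[0,1]\times\T^4$ to a box of side $\sim N^2$ in $\R^5$, identifies $e^{\pm it\Dl}\P_{\leq N}f$ with the extension operator of the paraboloid restricted to caps of radius $\sim N^{-1}$; decoupling on this scale yields the near-sharp bound
$$
\big\| e^{\pm it\Dl} \P_{\leq N} f \big\|_{L^p_{t,x}([0,1]\times\T^4)} \les N^{2-\frac{6}{p}+\eps} \|f\|_{L^2}
$$
for every $\eps>0$.

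The main obstacle will be removing the $N^\eps$ loss when $p>3$ strictly; it is worth noting that crude interpolation between the $\eps$-bound near $p=3$ and the Bernstein bound at $p=\infty$ is insufficient, as it reintroduces a comparable $N^\eps$ factor at every interior exponent. To remove the loss cleanly I would invoke the scale-invariant periodic Strichartz estimates of Killip-Vi\c{s}an \cite{KV16}, whose argument combines the Bourgain-Demeter decoupling with a rescaling of the torus $\T^4\to(L\T)^4$ together with Bourgain's multilinear improvement, exploiting the strict supercriticality $p>3$ to absorb the $\eps$ loss in the limit $L\to\infty$. This delivers the sharp bound $N^{2-6/p}$ and completes the proof.
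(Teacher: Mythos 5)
Your proposal is correct and takes essentially the same route as the paper, which states this lemma without proof and cites precisely Bourgain--Demeter \cite{BD15} for the $\eps$-lossy estimate above the critical exponent $\tfrac{2(d+2)}{d}=3$ in $d=4$, and Killip--Vi\c{s}an \cite{KV16} for removing the $N^\eps$ loss in the strictly supercritical range $p>3$; your separate Bernstein treatment of $p=\infty$ and your (correct) remark that interpolation with the $L^\infty$ bound cannot remove the loss at interior exponents are the right supporting observations. The only minor inaccuracy is attributing a ``multilinear improvement'' to the Killip--Vi\c{s}an argument --- their $\eps$-removal comes from a large-torus rescaling with decoupling used as a black box --- but since you defer to that reference for the mechanism, this does not affect correctness.
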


For $1\le q,r\le \infty$, we define the index $s_{q,r}$ as follows
\begin{equation}\label{sqr}
	s_{q,r} \deff 2 - \tfrac2q - \tfrac4r.
\end{equation}
We then obtain the following estimate.
	\begin{lemma}\label{LEM:str}
		For $3< q\leq r\leq\infty$ and $s_{q,r}$ in \eqref{sqr}, we have
		$$ \|e^{\pm it\Delta}f\|_{L_t^q([0,1]; L^r_x(\T^4))}\lesssim \|f\|_{H^{s_{q,r}}(\T^4)}.
		$$ 
	\end{lemma}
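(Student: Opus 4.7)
The plan is to reduce the mixed-norm Strichartz estimate to the diagonal $L^p_{t,x}$ estimate of Lemma~\ref{LEM:decoupling} by combining a Littlewood--Paley decomposition with Bernstein's inequality.

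First, I would establish a frequency-localized bound. Given a dyadic integer $N \geq 1$, applying Lemma~\ref{LEM:decoupling} with $p = q > 3$ to the dyadic piece $\P_N f$ (obtained as the difference $\P_{\leq N} f - \P_{\leq N/2} f$) yields
\[ \|e^{\pm it\Delta} \P_N f\|_{L^q_{t,x}([0,1]\times\T^4)} \les N^{2 - 6/q} \|\P_N f\|_{L^2(\T^4)}. \]
Since $e^{\pm it\Delta} \P_N f$ is frequency-localized to $|n| \sim N$, Bernstein's inequality in the spatial variable (valid because $q \leq r$) gives $\|e^{\pm it\Delta} \P_N f\|_{L^r_x} \les N^{4/q - 4/r} \|e^{\pm it\Delta} \P_N f\|_{L^q_x}$. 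Taking the $L^q_t$ norm on $[0,1]$ and combining the two bounds produces the key frequency-localized estimate
\[ \|e^{\pm it\Delta} \P_N f\|_{L^q_t L^r_x} \les N^{2 - 6/q + 4/q - 4/r} \|\P_N f\|_{L^2} = N^{s_{q,r}} \|\P_N f\|_{L^2}. \]

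Next, I would sum over dyadic scales. For $r$ finite (so $1 < q \leq r < \infty$), the Littlewood--Paley square function theorem on $L^q_t L^r_x$ combined with Minkowski's inequality in $L^{q/2}_t L^{r/2}_x$ (legitimate since $q, r \geq 2$) yields
\[ \|e^{\pm it\Delta} f\|_{L^q_t L^r_x} \sim \bigg\| \Big( \sum_N |e^{\pm it\Delta} \P_N f|^2 \Big)^{\!1/2} \bigg\|_{L^q_t L^r_x} \les \bigg( \sum_N \|e^{\pm it\Delta} \P_N f\|_{L^q_t L^r_x}^2 \bigg)^{\!1/2}. \]
Inserting the frequency-localized bound and invoking the Littlewood--Paley characterization of $H^{s_{q,r}}(\T^4)$ then gives the desired estimate. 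Note that the assumption $q > 3$ is used precisely to make $q/2 > 1$ (so triangle inequality applies in $L^{q/2}_{t,x}$) and to avoid the endpoint $p = 3$ of Lemma~\ref{LEM:decoupling} where a logarithmic loss would be introduced.

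The step I expect to be most delicate is the endpoint $r = \infty$, where the square function characterization of $L^\infty_x$ fails. Here I would fix a large finite $r' \in [q, \infty)$ to which the previous argument applies, and then use the Sobolev embedding $W^{\sigma, r'}(\T^4) \hookrightarrow L^\infty(\T^4)$ for suitable $\sigma$, noting that the effective regularity $s_{q, r'} + 4/r'$ equals $s_{q,\infty} = 2 - 2/q$. Since this Sobolev embedding is non-sharp at the critical $\sigma = 4/r'$, a small buffer $\sigma = 4/r' + \varepsilon$ may be required, which can either be absorbed by working in a Besov-type refinement or by performing the summation directly via Cauchy--Schwarz on $\sum_N N^{s_{q,\infty}} \|\P_N f\|_{L^2}$; this endpoint issue is the main technical point I would watch carefully.
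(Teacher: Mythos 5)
Your argument is correct and is essentially identical to the paper's proof: both reduce to the diagonal estimate of Lemma~\ref{LEM:decoupling} with $p=q$ on each Littlewood--Paley piece, upgrade $L^q_x$ to $L^r_x$ by Bernstein (picking up $N^{4/q-4/r}$), and sum via the square-function theorem and Minkowski's inequality to recover the $H^{s_{q,r}}$ norm. Your extra care at the endpoint $r=\infty$ (where the paper's summation step is stated without comment) is a reasonable refinement but does not change the approach.
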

	\begin{proof}
		Let $I=[0,1]$.
		We start by writing $f = \sum_{N} \P_N f$ from Littlewood-Paley decomposition. From Bernstein's inequality and Lemma~\ref{LEM:decoupling}, we have
		\begin{align*}
			\|e^{\pm it\Delta} \P_N f \|_{L^q_I L^r_x} & \les N^{\frac4q - \frac4r} \|e^{\pm it\Delta} \P_{\leq 2N} ( \P_N f) \|_{L^q_I L^q_x}  \les N^{2 - \frac2q - \frac4r} \|\P_N f \|_{L^2_x}.
		\end{align*}
		Using the Littlewood-Paley theorem, Minkowski's inequality, and the above estimate, we obtain
\begin{align*}
\| e^{\pm it\Delta}  f \|_{L^q_I L^r_x} %& \les \bigg( \sum_{N \geq 1 \text{ dyadic}} \|e^{it\Delta} \P_N f \|_{L^q_t L^r_x}^2 \bigg)^{1/2} \\
&\les \bigg(\sum_{N \geq 1 \text{ dyadic}} N^{2s_{q,r}} \|\P_N f \|^2_{L^2_x} \bigg)^{1/2} \sim \|f\|_{H^{s_{q,r}}}.
\end{align*}

\noi
as desired.
\end{proof}

From the definition of the linear beam operator $S(t)$ in \eqref{defS}, the fact that $e^{\pm i t \Dl}$ are isometries in $H^s(\T^4)$ for any $s\in\R$, and Lemma~\ref{LEM:str}, we obtain the following homogeneous Strichartz estimates for the linear beam operator.
\begin{lemma}
\label{LEM:str_ho}
Let $S(t)$ be the linear operator in \eqref{defS}, $0<T\le 1$, $3 < q \leq r \leq \infty$, and $s\geq s_{q,r}$. Then, we have
\begin{align*}
\| S(t)(u_0,u_1)\|_{L_T^\infty H_x^s} + \| S(t)(u_0,u_1)\|_{L_T^q L_x^r}&\les \|(u_0,u_1)\|_{\H^s}.
\end{align*}
\end{lemma}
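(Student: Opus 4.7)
My plan is to reduce the bound to the Schr\"odinger Strichartz estimate in Lemma \ref{LEM:str} by expressing the beam propagator $S(t)$ as a combination of $e^{\pm it\Delta}$. Using the Euler identities $\cos(t\Delta) = \tfrac12(e^{it\Delta} + e^{-it\Delta})$ and $\sin(t\Delta) = \tfrac{1}{2i}(e^{it\Delta} - e^{-it\Delta})$, on the nonzero Fourier modes I would write
\[
S(t)(u_0, u_1) = \tfrac12(e^{it\Delta} + e^{-it\Delta}) u_0 + \tfrac{1}{2i}(e^{it\Delta} - e^{-it\Delta}) \Delta^{-1} u_1,
\]
while on the zero mode the propagator reduces to the constant (in $x$) function $\ft{u_0}(0) + t\, \ft{u_1}(0)$ by the convention $\sin(t\cdot 0)/0 = t$ stated after \eqref{defS}.

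For the $L^\infty_T H^s_x$ part, the bound is immediate: $e^{\pm it\Delta}$ is an $H^s$-isometry for every $s \in \R$, and $\Delta^{-1}$ on the nonzero modes maps $H^{s-2}$ to $H^s$ boundedly, so each piece in the decomposition above is controlled by $\|(u_0, u_1)\|_{\mathcal{H}^s}$ uniformly in $t$; the zero mode contributes at most $|\ft{u_0}(0)| + T|\ft{u_1}(0)| \lesssim \|(u_0, u_1)\|_{\mathcal{H}^s}$ since $T \leq 1$. For the $L^q_T L^r_x$ part, I would apply Lemma \ref{LEM:str} directly to each Schr\"odinger piece: since $s \geq s_{q,r}$,
\[
\|e^{\pm it\Delta} u_0\|_{L^q_T L^r_x} \lesssim \|u_0\|_{H^{s_{q,r}}} \lesssim \|u_0\|_{H^s},
\]
and similarly $\|e^{\pm it\Delta} \Delta^{-1} u_1\|_{L^q_T L^r_x} \lesssim \|\Delta^{-1} u_1\|_{H^{s_{q,r}}} \lesssim \|u_1\|_{H^{s-2}}$. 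The zero mode contribution $\ft{u_0}(0) + t\,\ft{u_1}(0)$ is constant in $x$ and uniformly bounded on $[0,T] \subset [0,1]$, so its $L^q_T L^r_x$ norm is controlled by $\|(u_0,u_1)\|_{\mathcal{H}^s}$. Summing the two contributions yields the claim. There is no substantive obstacle here: the argument is essentially an algebraic reduction from the beam to the Schr\"odinger setting, and all the analytic content is already contained in Lemma \ref{LEM:str}.
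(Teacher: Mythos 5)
Your argument is correct and is exactly the route the paper takes: the paper derives this lemma in one sentence from the decomposition of $\cos(t\Dl)$ and $\sin(t\Dl)/\Dl$ into $e^{\pm it\Dl}$, the $H^s$-isometry of the Schr\"odinger group, and Lemma~\ref{LEM:str}. Your additional explicit treatment of the zero mode (where $\sin(t\cdot 0)/0 = t$) is a detail the paper leaves implicit but is handled correctly.
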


%\begin{proof}
%	The inequality \eqref{homo1} follows easily from the fact that $e^{it\Dl}$ is an isometry in $H^{s}(\T^4)$ for all $s\in\R$.
%%
%For \eqref{homo2}, we apply \eqref{LrBr2} and Lemma \ref{LEM:str} to obtain
%	\begin{align*}
%		\|S(t)(u_0,u_1)\|_{L^q_T L^r_x }  \les \|S(t)(u_0,u_1)\|_{L^q_T B^0_{r,2}} & \les \|(u_0,u_1)\|_{\H^{s} },
%	\end{align*}
%where $s\geq s_{q,r}$, for $3 < q \leq r \leq \infty$.
%\end{proof}

We now establish the following inhomogeneous Strichartz estimate, using a $TT^*$-argument.
\begin{lemma}
\label{LEM:str_inho}
For $3< q\leq r<\infty$ and $s_{q,r}$ as in \eqref{sqr}, we have
		\begin{align*}%\label{inhomo}
			\bigg\|\int_0^t\frac{\sin(t-t') \Dl}{\Dl}F(t')dt' \bigg\|_{L_t^q([0,1];L^r_x(\T^4))} &\les \|F\|_{L_t^1H_x^{s_{q,r}-2}([0,1]\times\T^4)}.
			%\bigg\|\int_0^t\frac{\sin(t-t') \Dl}{\Dl}F(t')dt' \bigg\|_{L_t^3 B_{q,2}^0([0,1]\times\T^4)} &\les \|F\|_{L_t^1H_x^{s_{3,q}-2 + \eps}([0,1]\times\T^4)}, \label{inhomo_eps}
		\end{align*}
%	where $3<q < \infty$.
\end{lemma}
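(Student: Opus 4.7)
The plan is to reduce this retarded inhomogeneous estimate to its non-retarded version via the Christ-Kiselev lemma (Lemma~\ref{LEM:CK}), and to prove the non-retarded version by a $TT^*$-style argument leveraging the homogeneous Schr\"odinger Strichartz bound from Lemma~\ref{LEM:str}.

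First, I would decompose the kernel $\sin((t-t')\Dl)/\Dl$ as follows: away from the zero spatial frequency, one has
\begin{equation*}
\frac{\sin((t-t')\Dl)}{\Dl} = \frac{1}{2i\Dl}\bigl(e^{i(t-t')\Dl} - e^{-i(t-t')\Dl}\bigr),
\end{equation*}
while at the zero mode, the convention $\sin(0)/0 = t-t'$ produces the contribution $(t-t')\widehat{F}(t',0)$. Since $t,t'\in[0,1]$, this zero-mode piece is controlled trivially in $L^q_t L^r_x$ by $\int_0^1 |\widehat{F}(t',0)|\,dt' \les \|F\|_{L^1_t H^{s_{q,r}-2}_x}$.

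For the nonzero-frequency contribution, I would consider the corresponding non-retarded integral on $[0,1]$ and factor out the propagator:
\begin{equation*}
\int_0^1 \frac{e^{\pm i(t-t')\Dl}}{\Dl} F(t')\,dt' = e^{\pm it\Dl} G, \qquad G \deff \int_0^1 e^{\mp it'\Dl} \Dl^{-1} F(t')\,dt',
\end{equation*}
where $\Dl^{-1}$ is understood to act on nonzero frequencies only. Applying Lemma~\ref{LEM:str} to the free evolution yields $\|e^{\pm it\Dl} G\|_{L^q_t([0,1]; L^r_x)} \les \|G\|_{H^{s_{q,r}}}$, and by Minkowski's inequality, the unitarity of $e^{\mp it'\Dl}$ on $H^{s_{q,r}}$, and the fact that $\Dl^{-1}$ gains two derivatives away from the zero mode, we get
\begin{equation*}
\|G\|_{H^{s_{q,r}}} \le \int_0^1 \|\Dl^{-1} F(t')\|_{H^{s_{q,r}}}\,dt' \les \|F\|_{L^1_t H^{s_{q,r}-2}_x}.
\end{equation*}

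Finally, since the hypothesis $3 < q$ ensures in particular $p=1 < q$, Lemma~\ref{LEM:CK} upgrades this non-retarded bound on $[0,1]$ to the desired retarded bound with $\int_0^t$. The main subtlety is not really an analytic obstacle but a bookkeeping issue: one has to separate off the zero spatial frequency (where $\Dl^{-1}$ is undefined) and verify that the Christ-Kiselev framework is correctly set up with our Strichartz exponents; once these are handled, the estimate is an immediate consequence of the homogeneous Schr\"odinger Strichartz bound.
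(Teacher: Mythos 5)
Your proposal is correct and follows essentially the same route as the paper: split $\sin((t-t')\Dl)/\Dl$ into the two half-waves $e^{\pm i(t-t')\Dl}/\Dl$, establish the non-retarded bound by combining the homogeneous Schr\"odinger Strichartz estimate of Lemma~\ref{LEM:str} with the $L^1_t H^{s_{q,r}}_x \to H^{s_{q,r}}_x$ dual bound (which the paper packages as a $TT^*$ argument and you obtain directly via Minkowski and unitarity --- the same chain of inequalities), and then apply the Christ--Kiselev lemma with $p=1<q$. Your explicit treatment of the zero frequency mode is a welcome bit of extra care where the paper simply remarks that it ``can be estimated easily.''
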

	
\begin{proof}
Note that the zero-th frequency of $F$ can be estimated easily, and so we can assume that $F$ has mean zero below.
Let $I=[0,1]$.
First note that 
\begin{align*}
\bigg\| \int_0^t &\frac{\sin((t-t') \Dl )}{\Dl} F(t') \, dt' \bigg\|_{L^q_I L^r_x} \\ 
&\les \bigg\| \int_0^t \frac{e^{i(t-t')\Delta}}{\Delta} F(t') \, dt' \bigg\|_{L^q_I L^r_x} +  \bigg\| \int_0^t \frac{e^{-i(t-t')\Delta}}{\Delta} F(t') \, dt' \bigg\|_{L^q_I L^r_x}.
\end{align*}

\noi
Thus, we focus on estimating the first term, as the estimate for the second term follows from an analogous strategy. The operator $T$ defined by $Tu_0 = e^{it\Delta}u_0$ is a bounded operator $T: H^{s_{q,r}} \to L^q_I L^r_x$ from Lemma~\ref{LEM:str}. 
Note that we have
\begin{align*}
		\jb{Tu_0,G}_{t,x}  = \int_0^1 \int_{\T^4} e^{it\Delta} u_0(x) \wb{G(t, x)} \, dx \, dt 
	 %= \int_0^1 \sum_{n \in \Z^4} e^{it|n|^2} \ft{u}_0(n) \wb{\ft{G}(t,n)} \, dt \\
		& = \sum_{n \in \Z^4} \ft{u}_0(n) \wb{ \int_0^1 e^{-it|n|^2} \ft{G}(t,n) \, dt}  = \jb{u_0, T^* G}_{x},
\end{align*}

\noi
where the dual operator $T^*$ is given by
	$$T^* G = \int_0^1 e^{-it \Delta} G(t,\cdot) \, dt,$$
	which in turn is bounded from $L^{q'}_I L^{r'}_x$ to $H^{-s_{q,r}}_x$. From the trivial boundness of $T: H^s_x \to L^\infty_I H^s_x$ for any $s\in\R$, we conclude that $T^*: L^1_I H^{s_{q,r}}_x \to H^{s_{q,r}}_x$ is also bounded. Consequently, we have that $TT^*: L^1_I H^{s_{q,r}}_x \to L^q_t L^r_x$ and
	$$TT^* G = \int_0^1 e^{i(t-t')\Delta} G(t', \cdot) \, dt'.$$
	From the Christ-Kiselev lemma (Lemma \ref{LEM:CK}), we get that 
	$$\bigg\| \int_0^t e^{i(t-t')\Delta} G(t', \cdot) \, dt' \bigg\|_{L^q_I L^r_x} \les \|G\|_{L^1_I H^{s_{q,r}}_x},$$
	and by choosing $G = \frac{1}{\Delta} F$, we obtain the intended result.
	\end{proof}

\subsection{Proof of Theorem \ref{THM:1}}
\label{SUBSEC:LWP}

In this subsection, we prove Theorem \ref{THM:1} by constructing a solution $u=\Psi+v$ where $\Psi$ denotes the stochastic convolution solving \eqref{SNLB1b} and the remainder $v$ solves \eqref{eq:v_wick}. In particular, we consider the following mild formulation for $v$:
\begin{align}
v(t) = S(t) (u_0, u_1) \mp \int_0^t \frac{\sin ((t - t') \Dl)}{\Dl} \sum_{\l = 0}^k \binom{k}{\l} \Xi_\l \, v^{k - \l} (t') dt'
\label{Duhv}
\end{align}

\noi
for given initial data $(u_0, u_1)$ and a source $(\Xi_0, \Xi_1, \dots, \Xi_k)$ with the understanding that $\Xi_0 \equiv 1$, where $S(t)$ is the linear propagator as defined in \eqref{defS}.
Given $s, \eps \in \R$, we define the space $\mathcal{X}^{s,\eps} (\T^4)= \H^s (\T^4) \times \big( C([0, 1]; W^{-\eps, \infty} (\T^4)) \big)^{\otimes k}$
with the following norm for $\pmb{\Xi} = (u_0, u_1, \Xi_1, \dots, \Xi_k) \in \mathcal{X}^{s, \eps} (\T^4)$:
\begin{align*}
\| \pmb{\Xi} \|_{\mathcal{X}^{s, \eps}} = \| (u_0, u_1) \|_{\H^s} + \sum_{j = 1}^k \| \Xi_j \|_{C([0, 1]; W^{-\eps, \infty})}.
\end{align*}
Moreover, we introduce our solution space $X^{s,q,r}(T)$ for $s\in\R$ and $1 \le q,r\le \infty$:
\begin{align*}
X^{s, q, r} (T) \deff C([0, T]; H^s (\T^4)) \cap L^q ([0, T]; L^r (\T^4)).
\end{align*}

The local well-posedness in Theorem~\ref{THM:1} follows from local well-posedness of \eqref{Duhv} and Lemma \ref{LEM:sto_cov}, which states that the random enhanced data set $(u_0, u_1, \Psi, \Ws{\Psi^2}, \dots, \Ws{\Psi^k})$ almost surely belongs to $\mathcal{X}^{s, \eps}(\T^4)$ for any $\eps > 0$. We then show the following deterministic result for \eqref{Duhv}.
\begin{proposition}
\label{PROP:LWPv}
Given an integer $k \geq 2$, let $s_{\text{crit}}$ be as defined in \eqref{eq_scrit}. Then, the mild formulation \eqref{Duhv} is locally well-posed in $\mathcal{X}^{s, \eps}(\T^4)$ for 
\[ \textup{(i) } k \geq 4: 
\ s \geq  s_\textup{crit}
\qquad \text{or} \qquad 
 \textup{(ii) }  k = 2, 3: 
\ s >  s_\textup{crit},
\]
and $\eps > 0$ sufficiently small. More precisely, given an enhanced data set
\begin{align*}
\pmb{\Xi} = (u_0, u_1, \Xi_1, \dots, \Xi_k) \in \mathcal{X}^{s, \eps}(\T^4),
\end{align*}

\noi
there exist $T = T (\pmb{\Xi}) \in (0, 1]$ and a unique solution $v$ to the mild formulation \eqref{Duhv} in the class $X^{s', q, r} (T)$ for $s' = \min (s, 2 - \eps)$ and for some appropriate $1 \le q,r\le \infty$.
\end{proposition}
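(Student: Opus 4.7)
The plan is to prove Proposition \ref{PROP:LWPv} via a standard Banach fixed point argument applied to the map
\[
\Gamma v(t) = S(t) (u_0, u_1) \mp \int_0^t \frac{\sin ((t - t') \Dl)}{\Dl} \sum_{\l = 0}^k \binom{k}{\l} \Xi_\l v^{k - \l} (t') \, dt'
\]
on a closed ball in $X^{s', q, r}(T)$ centered at the linear flow, for a Strichartz pair $(q,r)$ chosen carefully according to the regime. In the subcritical regime I would take $(q,r)$ with $3 < q \le r$ and $s_{q,r}$ strictly less than $s'$; in the critical regime $k\geq 4$ I would take, for instance, $q=r=3(k-1)/2 > 3$, which satisfies $s_{q,r}=2-\frac{4}{k-1}=s_\textup{crit}=s'$. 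The case $k=2,3$ at criticality is excluded precisely because the scaling forces $q=r=3$, an endpoint which is unavailable in Lemma \ref{LEM:decoupling}.

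The linear piece $S(t)(u_0,u_1)$ is controlled in $X^{s',q,r}(T)$ by Lemma \ref{LEM:str_ho}. For the Duhamel contribution I would use Lemma \ref{LEM:str_inho} for the $L^q_T L^r_x$-component and the elementary bound $\|\sin(t\Dl)/\Dl\|_{H^{s'-2}\to H^{s'}}\lesssim 1$ for the $C_T H^{s'}_x$-component, reducing the task to estimating the nonlinear source in $L^1_T H^{s'-2}_x$, that is, bounding
\[
\sum_{\l=0}^k \binom{k}{\l} \bigl\| \Xi_\l v^{k-\l} \bigr\|_{L^1_T H^{s'-2}_x}
\]
by a polynomial in $\|v\|_{X^{s',q,r}(T)}$ and $\|\pmb{\Xi}\|_{\mathcal{X}^{s,\eps}}$.

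The heart of the argument is the nonlinear estimate. For $\l=0$ the term $v^k$ is handled by combining Lemma \ref{LEM:GKOlbnz}(i) with Sobolev embedding, distributing $k-1$ factors into $L^r_x$ and keeping one factor in $H^{s'}_x$ (so that $2-s'$ derivatives land on a single low-frequency-friendly factor). For $\l\ge 1$, since $\Xi_\l \in L^\infty_T W^{-\eps,\infty}_x$ carries negative regularity, I would invoke Lemma \ref{LEM:GKOlbnz}(ii) with the small parameter $2-s'$ (or, equivalently, $\eps$), trading $\eps$ derivatives from the rough stochastic factor onto $v^{k-\l}$; the resulting $W^{\eps,q_\star}$-norm of $v^{k-\l}$ is then controlled again by Lemma \ref{LEM:GKOlbnz}(i) and Sobolev embedding in terms of the $X^{s',q,r}(T)$-norm. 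In the strictly subcritical regime this leaves a positive power of $T$ after a Hölder in time, which gives the contraction by choosing $T$ small as a function of $\|\pmb{\Xi}\|_{\mathcal{X}^{s,\eps}}$.

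The main obstacle, and the reason only $k\geq 4$ is reached at criticality, is the absence of any spare power of $T$ when $s'=s_\textup{crit}$. Here I would replace the Hölder gain by the standard critical-Strichartz trick: by dominated convergence, $\|S(\cdot)(u_0,u_1)\|_{L^q_{[0,T]} L^r_x}\to 0$ as $T\to 0^+$, so one can shrink $T$ until the linear flow is small in the Strichartz norm and run the contraction in a ball of that small radius, with $T$ depending on the profile of $(u_0,u_1)$ rather than just its $\H^s$-norm. Uniqueness and continuous dependence of $v$ on the enhanced data set $\pmb{\Xi}\in\mathcal{X}^{s,\eps}(\T^4)$ follow from the same multilinear estimates applied to the difference $v-\wt v$, using the multilinearity of $\Xi_\l v^{k-\l}$ in the triple $(v,\wt v,\pmb{\Xi})$.
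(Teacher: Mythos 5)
Your overall architecture is exactly the paper's: a contraction for $\G$ in $X^{s',q,r}(T)$, Strichartz via Lemmas~\ref{LEM:str_ho}--\ref{LEM:str_inho}, Lemma~\ref{LEM:GKOlbnz}(ii) to move $\eps$ derivatives off the rough factors $\Xi_\l$ and Lemma~\ref{LEM:GKOlbnz}(i) to distribute them over $v^{k-\l}$, a H\"older-in-time gain $T^\eta$ in the subcritical cases, and, at criticality, smallness of $\|S(\cdot)(u_0,u_1)\|_{L^q_TL^r_x}$ by dominated convergence together with a ball carrying separate $L^\infty_T H^{s'}_x$ and $L^q_TL^r_x$ bounds. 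Your explanation of why $k=2,3$ fails at $s=0$ (the scaling forces the excluded endpoint $q=r=3$) also matches Remark~\ref{REM:LWP-sharp}.

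There is, however, one concrete step that fails as written: the critical pair $q=r=\tfrac{3(k-1)}{2}$. While it does satisfy $s_{q,r}=s_{\textup{crit}}$ and $q>3$ for $k\ge4$, the nonlinear estimate does not close with it. At criticality one needs
$\|v^k\|_{L^1_TH^{s'-2}_x}\les \|v^k\|_{L^1_TL^{4/(4-s')}_x}\les\|v\|^k_{L^q_TL^r_x}$ with \emph{no} power of $T$, and H\"older in space forces $r\ge \tfrac{4k}{4-s'}=\tfrac{2k(k-1)}{k+1}$, which is strictly larger than $\tfrac{3(k-1)}{2}$ for every $k\ge 4$ (the inequality $\tfrac{2k}{k+1}\le\tfrac32$ holds only for $k\le3$); interpolating against $L^\infty_TH^{s'}_x\hookrightarrow L^\infty_TL^{k-1}_x$ does not repair this, since the required interpolation weight $j=\tfrac{3-k}{2}$ is negative. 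So on the diagonal you simply do not control enough spatial integrability. The paper's choice is the off-diagonal admissible pair $(q,r)=(k,\tfrac{2k(k-1)}{k+1})$, for which $s_{q,r}=s_{\textup{crit}}$, $3<q\le r$ for $k\ge4$, and the single H\"older step $\|v^k\|_{L^1_TL^{4/(4-s')}_x}\le\|v\|^k_{L^k_TL^{4k/(4-s')}_x}$ closes exactly. With that substitution (and the corresponding adjustments for the $1\le\l\le k-1$ terms, which still gain $T^\eta$ because they contain fewer powers of $v$), your argument coincides with the paper's proof.
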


\begin{proof}%[Proof of Proposition \ref{PROP:LWPv}]
We define the map $\G$ by
\begin{align}
\begin{split}
\G [v] (t) &\deff S(t) (u_0, u_1) \mp \int_0^t \frac{\sin ((t - t') \Dl)}{\Dl} \sum_{\l = 0}^k \binom{k}{\l} (\Xi_\l  v^{k - \l} )(t') dt' \\
&\deff S(t) (u_0, u_1) \mp \sum_{\l = 0}^k \binom{k}{\l} \mathcal{I} \big( \Xi_\l v^{k - \l} \big) (t),
\end{split}
\label{defG}
\end{align}

\noi
and consider the following three cases.

\smallskip \noi
\underline{\textbf{Case 1:} $k \geq 4$ and $s > s_{\text{crit}}$.}

\noi Let $\eps>0$ sufficiently small and $(q, r) = (k - 1 + \ta, 2k - 2)$ for $\ta > 0$  such that $s'\ge s_{q,r}>s_{\text{crit}}$ for $s_{q,r}$ in \eqref{sqr}.
%$$s' \geq 2 - \frac{2}{k - 1 + \ta} - \frac{4}{2k - 2} > s_{\text{crit}}.$$
%
For $\l = 0$, by Lemma \ref{LEM:str_inho}, Sobolev's inequality and H\"older's inequality, we obtain
\begin{equation}
\begin{aligned}
\big\| \mathcal I \big( v^k \big) \big\|_{X^{s', q,r} (T)} %&=\big\| \mathcal I \big( v^k \big) \big\|_{L^\infty_T  H^{s'}_x} + \big\| \mathcal I \big( v^k \big) \big\|_{L^{q}_T  L^{r}_x} \\
&\les \|v^k\|_{L^1_T  H^{s'-2}_x}  \les  \|v^k\|_{L^1_T  L^{\frac{4}{4-s'}}_x} \les  \|v\|_{L^\infty_T L^{\frac{4}{2-s'}}_x}  \|v^{k-1}\|_{L^1_T L^{2}_x} \\
& \les   \|v\|_{L^\infty_T H^{s'}_x}  \|v\|^{k-1}_{L^{k-1}_T L^{2k-2}_x} \les T^{\eta}  \|v\|^k_{X^{s', q,r}(T)}
\end{aligned}
\label{eq_subc1-1}
\end{equation}

\noi
for some $\eta > 0$. For $1 \leq \l \leq k - 1$, proceeding as before, with Lemma~\ref{LEM:GKOlbnz}~(ii) and Lemma \ref{LEM:GKOlbnz} (i) repetitively, we obtain
\begin{align}
\begin{split}
\big\| \mathcal I \big( \Xi_\l v^{k - \l} \big) \big\|_{X^{s', q,r} (T)} %&=\big\| \mathcal I \big( \Xi_\l v^{k - \l} \big) \big\|_{L^\infty_T  H^{s'}_x} + \big\| \mathcal I \big( \Xi_\l v^{k - \l} \big) \big\|_{L^{k-1+\theta}_T  L^{2k-2}_x} \\
&% \les \big\| \Xi_\l v^{k-\l} \big\|_{L^1_T H^{s'-2}_x} 
= \big\|  \jb{\nb}^{-\eps} (\Xi_\l v^{k-\l}) \big\|_{L^1_T H^{s'-2+\eps}_x} \\
&\les
\big\|  \jb{\nb}^{-\eps} (\Xi_\l v^{k-\l}) \big\|_{L^1_T L^{\frac{4}{4-s'-\eps }}_x}\\
 &\les \|  \jb{\nb}^{-\eps} \Xi_\l  \|_{L^\infty_T L^{\frac{4}{\eps}}_x }    \big\|  \jb{\nb}^\eps v^{k-\l}  \big\|_{L^1_T L^{\frac{4}{4-s'-\eps }}_x}\\
  &\les \|  \Xi_\l  \|_{L^\infty_T W^{-\eps, \infty}_x }    \|  \jb{\nb}^\eps v    \|_{L^\infty_T  L^{\frac{4}{2-s'+\eps }}_x}
   \|  v    \|^{k-\l-1}_{L^{k-\l-1}_T  L^{\frac{2(k-\l-1)}{1-\eps }}_x}  \\
&\les \| \Xi_\l \|_{L^\infty_T W^{-\eps, \infty}_x }     \|    v    \|_{L^\infty_T  H^{s'}_x }
   T^\eta    \|  v    \|^{k-\l-1}_{L^{q}_T  L^{ r}_x} \\
 &\les    T^\eta \| \Xi_\l \|_{L^\infty_T W^{-\eps, \infty}_x }       \|  v    \|^{k-\l}_{X^{s', q,r}(T)}
\end{split}
\label{eq_subc1-2}
\end{align}

\noi
for some $\eta > 0$ and $\eps>0$ sufficiently small. 
Lastly, for $\l = k$, by Lemma \ref{LEM:str_inho}, since $s'<2$, we have
\begin{align}
\begin{split}
\big\| \mathcal I ( \Xi_k ) \big\|_{X^{s', q,r} (T)} %&=\big\| \mathcal I ( \Xi_k ) \big\|_{L^\infty_T  H^{s'}_x} + \big\| \mathcal I ( \Xi_k ) \big\|_{L^{k-1+\theta}_T  L^{2k-2}_x} \\
& \les \| \Xi_k \|_{L^1_T H^{s'-2}_x} \les T \| \Xi_k \|_{L_T^\infty W_x^{-\eps, \infty}}.
\end{split}
\label{eq_subc1-3}
\end{align}

\noi
By Lemma \ref{LEM:str_ho}, \eqref{defG}, \eqref{eq_subc1-1}, \eqref{eq_subc1-2}, and \eqref{eq_subc1-3}, we have
\begin{align*}
\| \G[v] \|_{X^{s', q,r} (T)} %&\les \| (u_0, u_1) \|_{\H^{s'}} + T^\eta \big( \| \pmb{\Xi} \|_{\mathcal{X}^{s , \eps}}^k + \| v \|_{X^{s', k - 1 + \ta, 2 k - 2} (T)}^k \big) \\
&\les \| \pmb{\Xi} \|_{\mathcal{X}^{s , \eps}} + T^\eta \big[ \| \pmb{\Xi} \|_{\mathcal{X}^{s , \eps}}^k + \| v \|_{X^{s', q,r} (T)}^k \big].
\end{align*}

%\noi
A straightforward modification of the above steps yields the following difference estimate:
\begin{align*}
\| \G[v_1] - \G[v_2] \|_{X^{s', q,r} (T)} &\les T^\eta \big[ \| \pmb{\Xi} \|_{\mathcal{X}^{s , \eps}}^k +  \| v_1 - v_2 \|_{X^{s', q,r} (T)}  \big( \| v_1 \|_{X^{s', q,r} (T)}^{k - 1} + \| v_2 \|_{X^{s', q,r} (T)}^{k - 1} \big) \big].
\end{align*}

\noi
Then, by $T = T (\| \pmb{\Xi} \|_{\mathcal{X}^{s, \eps}})>0$  sufficiently small, the local well-posedness of \eqref{Duhv} on $[0, T]$ follows from a contraction mapping argument.

\smallskip \noi
\underline{\textbf{Case 2:} $k = 2, 3$ and $s > s_{\text{crit}} = 0$.}

\noi
In this case, we take $(q,r) = (3+\ta, 3+\ta)$ for $0<\ta\le\frac{3s'}{2-s'}$ which guarantees that $s' \ge s_{q,r}$.
For $\l = 0$, proceeding as in \eqref{eq_subc1-1}, we have
\begin{equation}
\big\| \mathcal I \big( v^k \big) \big\|_{X^{s', q,r} (T)} 
 \les \| v^k\|_{L^1_T H^{s'-2}_x} \les \| v^k\|_{L^1_T L^{\frac{4}{4-s'}}_x} \les  \|v\|_{L^k_T L_x^{\frac{4k}{4-s'}}} \les T^\eta \|v\|_{X^{s',q,r}(T)},
\label{eq_subc2-1}
\end{equation}

\noi
for some $\eta > 0$, since $\frac{4k}{4-s'} \le \frac{6}{2-s'}$ for $k=2,3$. For $1 \leq \l \leq k - 1$, noticing that $\frac{2 (k - \l - 1)}{1 - \eps} < 3 + \ta$ for $k \leq 3$ and $\eps > 0$ sufficiently small, we proceed as in \eqref{eq_subc1-2} to obtain
\begin{align}
\begin{split}
\big\| \mathcal I \big( \Xi_\l v^{k - \l} \big) \big\|_{X^{s', q,r} (T)} =  T^\eta \| \Xi_\l \|_{L_T^\infty W_x^{-\eps, \infty}} \| v \|_{X^{s', q,r} (T)}^{k - \l}
\end{split}
\label{eq_subc2-2}
\end{align}

\noi
for some $\eta > 0$. 
%Lastly, the same estimate We also use similar steps as in \eqref{eq_subc1-3} to obtain
%\begin{align}
%\begin{split}
%\big\| \mathcal I \big( \Xi_k \big) \big\|_{X^{s', 3 + \ta, 3 + \ta} (T)} \les T \| \Xi_k \|_{L_T^\infty W_x^{- \eps, \infty}}.
%\end{split}
%\label{eq_subc2-3}
%\end{align}
%
By Lemma \ref{LEM:str_ho}, \eqref{defG}, \eqref{eq_subc2-1}, \eqref{eq_subc2-2}, and \eqref{eq_subc1-3}, we have
\begin{align*}
\| \G[v] \|_{X^{s', q,r} (T)} \les \| \pmb{\Xi} \|_{\mathcal{X}^{s , \eps}} + T^\eta \| \pmb{\Xi} \|_{\mathcal{X}^{s , \eps}}^k + T^\eta \| v \|_{X^{s', q,r} (T)}^k.
\end{align*}

\noi
Similar steps yield a difference estimate and we conclude the argument as in Case 1.

\smallskip \noi
\underline{\textbf{Case 3:} $k\ge4$ and $s = s_{\text{crit}}$.}

\noi
In this case, we take $
(q, r) = (k , \frac{2k(k-1)}{k+1})$ 
so that $s'=s=s_{\text{crit}} = s_{q,r}$ and $q,r>3$. By proceeding as in Case 1, the estimates \eqref{eq_subc1-2} and \eqref{eq_subc1-3} hold, but we can only show \eqref{eq_subc1-1} without the gain of $T^\eta$ on the right-hand side. Thus, we have
\begin{align}
\| \G [v] \|_{L^\infty_T H^{s'}_x} &\leq C \| (u_0, u_1) \|_{\H^{s}} + C T^\eta \| \pmb{\Xi} \|_{\mathcal{X}^{s, \eps}} \| v \|_{L^\infty_T H_x^{s'}} \sum_{\l = 0}^{k - 2} \| v \|_{L_T^{q} L_x^{r}}^\l \nonumber\\
&\quad + C T \|\pmb\Xi \|_{\mathcal{X}^{s,\eps}} + C \| v \|_{L^q_T L^r_x}^{k} ,
\label{eq_subc3-1}\\
\| \G [v] \|_{L_T^{q} L_x^{r}} &\leq C \| S(t) (u_0, u_1) \|_{L_T^q L_x^{r}} + C T^\eta \| \pmb{\Xi} \|_{\mathcal{X}^{s, \eps}} \| v \|_{L^\infty_T H_x^{s'}} \sum_{\l = 0}^{k - 2} \| v \|_{L_T^{q} L_x^{r}}^\l \nonumber\\
&\quad + C T \|\pmb\Xi \|_{\mathcal{X}^{s,\eps}} + C \| v \|_{L^q_T L^r_x}^{k} ,
\label{eq_subc3-2}
\end{align}

\noi
for some $C > 0$ and $\eta > 0$.
We now define the set $B_{a, b, T}$ as
\begin{align*}
B_{a, b, T} \deff \big\{ v \in X^{s', q,r} (T): \| v \|_{L^\infty_T H^{s'}_x} \leq a \text{ and } \| v \|_{L_T^{q} L_x^{r}} \leq b \big\}.
\end{align*}

\noi
Suppose that $\| (u_0, u_1) \|_{\H^s} \leq A$ for some $A > 0$. We let $a = 4CA$ and $0 < b \leq 1$ small enough such that
\begin{align}
C b^{k} \le \min(\tfrac a 4, \tfrac b 4).
\label{bdd1}
\end{align}

\noi
By dominated convergence theorem, we can let $T = T(u_0, u_1) > 0$ be small enough so that
\begin{align}
\| S(t) (u_0, u_1) \|_{L_T^{q} L_x^{r}} \leq \tfrac{b}{4C}.
\label{bdd2}
\end{align}

\noi
Choosing $T$ smaller, if necessary, we also assume that
\begin{align}
CT^\eta (k - 1) \| \pmb{\Xi} \|_{\mathcal{X}^{s, \eps}} \leq \min (\tfrac{1}{4a}, \tfrac{1}{4b}) \qquad \text{and} \qquad CT \| \pmb{\Xi} \|_{\mathcal{X}^{s, \eps}} \leq \min (\tfrac a 4, \tfrac b4)
\label{bdd3}
\end{align}

\noi
Combining \eqref{eq_subc3-1}, \eqref{eq_subc3-2}, \eqref{bdd1}, \eqref{bdd2}, and \eqref{bdd3}, we know that for $v \in B_{a, b, T}$, we have
\begin{align*}
\| \G [v] \|_{L^\infty_T H^{s'}} \leq a \qquad \text{and} \qquad 
\| \G [v] \|_{L_T^{q} L_x^{r}} \le b, 
\end{align*}

\noi
so that $\G$ maps $B_{a,b,T}$ to $B_{a,b,T}$. By further shrinking $b$ and $T$ if necessary, we can use similar steps to obtain
\begin{align*}
\| \G[v_1] - \G[v_2] \|_{X^{s', q,r} (T)} \leq \tfrac 12 \| v_1 - v_2 \|_{X^{s', q, r} (T)},
\end{align*}

\noi
so that $\G$ is a contraction map on $B_{a,b,T}$. We can then conclude the proof of local well-posedness of \eqref{Duhv}. \qedhere

\end{proof}

\begin{remark}\rm \label{REM:LWP-sharp}
	Note that in Case 3 above, to extend the argument to cover the critical regularity $s=0$ for $k=2,3$ (even without the noise terms), we would need to find suitable $q,r$ such that $s_{q,r}=0$ with $s_{q,r}$ in \eqref{sqr}. However, we can easily see that this requires that $q> 3$ which implies that $r< 3$ and vice-versa, thus the Strichartz estimates in Lemmas~\ref{LEM:str_ho}-\ref{LEM:str_inho} do not apply. Moreover, since these are derived from the sharp Strichartz estimates in Lemma~\ref{LEM:decoupling} which are known to fail at the endpoint $p=3$ \cite{BO93}, the argument above is insufficient to reach critical regularity for quadratic and cubic nonlinearities.
	
\end{remark}

\section{Pathwise global well-posedness of the cubic SNLB}
\label{SEC:GWP1}
In this section, we show pathwise global well-posedness of the  Wick-ordered cubic SNLB \eqref{SNLB6} via the hybrid argument in \cite{GKOT}. We restrict our attention to $0<s<2$, since the result for $s\ge 2$ follows from the same argument.
In Subsection~\ref{SUBSEC:3.1}, we first show some preliminary estimates involving the $I$-operator, and establish commutator estimates to control \eqref{E1}. We then prove Theorem~\ref{THM:GWP1} in Subsection~\ref{SUBSEC:3.2}.

\subsection{Commutator estimates and other preliminaries}
\label{SUBSEC:3.1}
We recall the definition of the $I$-operator with Fourier multiplier $m_N$ in \eqref{I0a}. In the following, we fix $N \in \NB$.
	From the definition of the $I$-operator and the Littlewood-Paley theorem, we have that
	\begin{align}
		\|f\|_{H^s}\les \|I f\|_{ H^2} &\les N^{2-s} \|f\|_{H^s}
		\label{I1},\\
		\|I f\|_{W^{s_0 + s_1, p}} &\les  N^{s_1}\|f\|_{W^{s_0, p}},
		\label{I2}
	\end{align}
	
	\noi
	for any $s_0 \in \R$, $0 \leq s_1 \leq 2- s$,  and $1 < p < \infty$.
For simplicity, we will use the notations
\begin{align}
f_{\les N} \deff \pi_{\frac{N}{3}} f
\qquad \text{and}\qquad  f_{\ges N} \deff \pi_{\frac{N}{3}}^\perp f \deff f - f_{\les N},
\label{Q0}
\end{align}
where $\pi_N$ denotes the projection onto frequencies $\{|n| \le N\}$.

We first go over some basic commutator estimates in the following lemmas.

\begin{lemma}\label{LEM:C1}
Let $\frac 43 \le s < 2$. Then, for $k=1,2,3$, we have  
\begin{align*}
\|(If)^k - I(f^k)\|_{L^2} \les N^{-2+k(2-s)} \|If\|^k_{H^2}.
%\label{Q1}
\end{align*}

\end{lemma}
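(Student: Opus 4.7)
For $k=1$ the estimate is trivial since $If - I(f) = 0$. For $k \in \{2,3\}$, the plan is a frequency-decomposition based commutator analysis. Write $f = f_{\lesssim N} + f_{\gtrsim N}$ using the projectors from \eqref{Q0}. Since $m_N \equiv 1$ on $\{|\xi| \le N\}$, the operator $I$ acts as the identity on $f_{\lesssim N}$, and crucially also on any product $f_{\lesssim N}^j$ for $j \le k \le 3$, because such a product has Fourier support in $\{|\xi| \le jN/3\} \subseteq \{|\xi| \le N\}$. Consequently $(If_{\lesssim N})^k - I(f_{\lesssim N}^k) = 0$, and after binomially expanding, the problem reduces to estimating finitely many mixed terms
\[
A_\ell = f_{\lesssim N}^{k-\ell}(If_{\gtrsim N})^\ell - I\big(f_{\lesssim N}^{k-\ell} f_{\gtrsim N}^\ell\big), \qquad 1 \le \ell \le k,
\]
each containing at least one high-frequency factor.

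Each $A_\ell$ is estimated by splitting it via the triangle inequality into $f_{\lesssim N}^{k-\ell}(If_{\gtrsim N})^\ell$ and $I(f_{\lesssim N}^{k-\ell} f_{\gtrsim N}^\ell)$, then applying H\"older on $\T^4$, the fractional Leibniz rule of Lemma~\ref{LEM:GKOlbnz}, and Sobolev embeddings. The key quantitative inputs are: the low-frequency identity $\|f_{\lesssim N}\|_{W^{\alpha,p}} = \|If_{\lesssim N}\|_{W^{\alpha,p}} \le \|If\|_{W^{\alpha,p}}$; the high-frequency gain
\[
\|f_{\gtrsim N}\|_{H^\sigma} \lesssim N^{-(2-s)} \|If\|_{H^{\sigma+2-s}},
\]
which follows from $m_N(\xi)^{-1}\lesssim (|\xi|/N)^{2-s}$ on $\{|\xi|\gtrsim N\}$ and, combined with Sobolev, yields in particular $\|f_{\gtrsim N}\|_{L^{2k}}\lesssim N^{-(2-s)}\|If\|_{H^2}$ via $H^{2-2/k}\hookrightarrow L^{2k}$ (the critical embedding is $H^{4/3}\hookrightarrow L^6$ on $\T^4$, which requires exactly $s\ge 4/3$ when $k=3$); and the Bernstein-type bound $\|If_{\gtrsim N}\|_{H^\tau}\lesssim N^{\tau-2}\|If\|_{H^2}$ for $\tau\le 2$, coming from the Fourier support of $If_{\gtrsim N}$ in $\{|\xi|\gtrsim N\}$. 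These combine to give a factor of $N^{-(2-s)}$ per high-frequency occurrence together with an additional $N^{-(2-s)}$ gain from the commutator structure itself---in the unbalanced regime via the mean-value bound $|m_N(n_1)m_N(n_2)-m_N(n_1+n_2)|\lesssim (|n_1|/|n_2|)\,m_N(n_2)$ (derived from $|\nabla m_N(\xi)|\lesssim m_N(\xi)/|\xi|$ for $|\xi|\gtrsim N$)---thereby producing the target $N^{-2+k(2-s)}$.

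The main obstacle is the balanced high-high regime, where several inputs have comparable frequencies $\gtrsim N$ and the output frequency may fall back below $N$. The naive estimate $\|I(f_{\gtrsim N}^k)\|_{L^2}\le \|f_{\gtrsim N}\|_{L^{2k}}^k$ only yields $N^{-k(2-s)}\|If\|_{H^2}^k$, which matches the target at $s = \tfrac{4}{3}$ when $k=3$ but loses by a factor of $N^{2-s}$ for larger $s$. To recover the full bound, I would split $\|I(f_{\gtrsim N}^k)\|_{L^2}$ into low-output ($|\xi|\lesssim N$) and high-output ($|\xi|\gtrsim N$) contributions: the low-output part is bounded using the $L^2$-boundedness $|m_N|\le 1$ together with H\"older, while the high-output part exploits the smallness $m_N(\xi)\lesssim (N/|\xi|)^{2-s}$ and a dual Sobolev bound $\|h\|_{H^{-(2-s)}}\lesssim \|h\|_{L^{4/(4-s)}}$ on $\T^4$ to recover the missing $N^{-(2-s)}$ factor. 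A parallel treatment of $\|(If_{\gtrsim N})^k\|_{L^2}$ via Bernstein on the high-frequency support of $If_{\gtrsim N}$ closes the estimate.
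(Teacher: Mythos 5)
Your skeleton is the paper's: write $f=f_{\les N}+f_{\ges N}$ as in \eqref{Q0}, use $I(f_{\les N}^{j})=f_{\les N}^{j}$ for $j\le 3$ to cancel the purely low-frequency contribution, and estimate each remaining mixed term by the triangle inequality together with H\"older, Sobolev, and Bernstein on the high-frequency supports. One remark: no mean-value/commutator bound on $m_N$ is used (or needed) for this lemma --- that mechanism belongs to Lemma~\ref{LEM:C2}; here the entire gain comes from Sobolev-embedding deficits on frequencies $\ges N$.

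The issue is your treatment of the balanced high-high term, where the ``main obstacle'' you describe is an artifact of a lossy intermediate bound, and the repair you propose does not close as written. Passing through $\|f_{\ges N}\|_{L^{2k}}\les N^{-(2-s)}\|If\|_{H^{4-\frac2k-s}}$ and then discarding the frequency localization via $\|If\|_{H^{4-\frac2k-s}}\le\|If\|_{H^2}$ throws away a factor $N^{2-\frac2k-s}\le 1$. Keeping it --- most simply via $\|f_{\ges N}\|_{L^{2k}}\les\|f_{\ges N}\|_{H^{2-\frac2k}}\les N^{2-\frac2k-s}\|f_{\ges N}\|_{H^{s}}\les N^{(2-s)-\frac2k}\|If\|_{H^2}$, which uses only $\jb{n}^{2-\frac2k}\les N^{2-\frac2k-s}\jb{n}^{s}$ on $|n|\ges N$ (valid since $2-\frac2k\le s$ for $s\ge\frac43$, $k\le3$) and \eqref{I1} --- gives $\|I(f_{\ges N}^k)\|_{L^2}\le\|f_{\ges N}\|_{L^{2k}}^k\les N^{k(2-s)-2}\|If\|_{H^2}^k$ directly for all $\frac43\le s<2$; this is exactly the paper's estimate \eqref{Q4} with $j=0$. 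Your output-frequency splitting is therefore unnecessary, and it is also incomplete: on the low-output piece $\pi_{\les N}I(f_{\ges N}^k)$ you invoke only $|m_N|\le1$ and H\"older, which reproduces the very bound $N^{-k(2-s)}$ you deemed insufficient (to profit from the output localization you would need a Bernstein bound $L^{4/(4-s)}\to L^2$ on frequencies $\les N$, not just $|m_N|\le 1$); and on the high-output piece the multiplier bound $m_N(\xi)\les(N/|\xi|)^{2-s}$ costs a factor $N^{+(2-s)}$ against the dual Sobolev norm rather than gaining $N^{-(2-s)}$ --- that computation happens to close, but not for the reason you state. With the corrected per-factor bound the unbalanced mixed terms close in the same way, and the argument coincides with the paper's.
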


\begin{proof}
By the definition of the $I$-operator and \eqref{Q0},  we have 
 $I(f_{\les N}^k) = f_{\les N}^k$
for $ k = 1, 2, 3$.
Thus, we obtain
\begin{equation}
\begin{aligned}
(If)^k - I(f^k) 
&= \big(I(f_{\les N} + f_{\ges N})\big)^k - I\big((f_{\les N} + f_{\ges N})^k\big) \\
&= \big(f_{\les N} + I(f_{\ges N})\big)^k - I\big((f_{\les N} + f_{\ges N})^k\big)\\
&= f_{\les N}^k - I\big(f_{\les N}^k\big) + 
\sum_{j=0}^{k-1} {k \choose j}
\Big(f_{\les N}^j(If_{\ges N})^{k-j} - I\big(f_{\les N}^j f_{\ges N}^{k-j}\big)
\Big) \\
&= \sum_{j=0}^{k-1} {k \choose j}
\Big(f_{\les N}^j(If_{\ges N})^{k-j} - I\big(f_{\les N}^j f_{\ges N}^{k-j}\big)
\Big).
\end{aligned}
\label{Q2}
\end{equation}

We first consider the case when $1 \leq j \leq k - 1$. We let $1<q<\infty$ sufficiently large and $\dl > 0$ small such that $\frac 12 = \frac{j}{q} + \frac{1}{2 + \dl}$.
 Then, by H\"older's and Sobolev's inequalities, we have
\begin{align}
\begin{split}
\|f_{\les N}^j (If_{\ges N})^{k-j}\|_{L^2}
& \le 
\|f_{\les N}\|_{L^{q}}^{j}
\|If_{\ges N}\|_{L^{(2+\dl) (k - j)}}^{k-j} \\
& \les 
 \|f_{\les N} \|_{H^{2}}^{j} 
\|If_{\ges N}\|_{H^{2 - \frac{4}{(2+\dl) (k - j)}}}^{k-j}\\
& \les N^{-\frac{4}{2+\dl}}  \|If\|_{H^2}^k.
\end{split}
\label{Q3}
\end{align}

\noi
Similarly,
using the boundedness of the multiplier $m_N$
and \eqref{I1}, we have  
\begin{align}
\begin{split}
\big\|I\big(f_{\les N}^j f_{\ges N}^{k-j}\big)\big\|_{L^2} 
& \les \|f_{\les N}^j f_{\ges N}^{k-j}\|_{L^2} \\
& \le \|f_{\les N}\|_{L^q}^j \|f_{\ges N}\|_{L^{(2+\dl)(k-j)}}^{k - j} \\
& \les 
\|I f_{\les N}\|_{H^2}^j
\| f_{\ges N} \|_{H^{2 - \frac{4}{(2+\dl) (k - j)}}}^{k-j}\\
& \les 
N^{ (k-j)(2-s)-\frac{4}{2+\dl}} 
\|I f_{\les N}\|_{H^2}^j
\| f_{\ges N} \|_{H^s}^{k-j}\\
& \les N^{-2 + k(2-s)} \|If \|_{H^2}^k.
\end{split}
\label{Q4}
\end{align}

\noi
Here, we have used the fact that $2 - \frac{4}{(2 + \dl) (k - j)} \leq s$, which is guaranteed 
by $ \frac 43 \le s < 2$. When $j = 0$, similar estimates to \eqref{Q3} and \eqref{Q4} hold with $q = \infty$ and $\dl = 0$.
Therefore, the desired estimate
follows from \eqref{Q2}, \eqref{Q3}, and~\eqref{Q4}.
\end{proof}

\begin{lemma} \label{LEM:C2}
 Let $0 < s < 2$ and $0< \g < 1$. 
 Given $\dl = \dl(s) > 0$ sufficiently small, there exist small $\g_0 = \g_0(\dl) > 0$
 and large $p = p(\dl) \gg1$ such that  
\begin{align*}
\norm{(If)(Ig) - I(fg)}_{L^2} 
\les N^{ - \frac{1-\g}{2} + \dl } \|f\|_{H^{2-\g}}\|g\|_{W^{-\g_0, p}}
%\label{Q5a}
\end{align*}

\noi
for any sufficiently large $N \gg1$.
\end{lemma}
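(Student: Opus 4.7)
The plan is to perform a Littlewood-Paley decomposition $f = \sum_{M_1} \P_{M_1} f$, $g = \sum_{M_2} \P_{M_2} g$ and run a case analysis depending on the sizes of the dyadic frequencies $M_1, M_2$ relative to $N$. The crucial initial observation is that the bilinear expression $(I \P_{M_1} f)(I \P_{M_2} g) - I(\P_{M_1} f \cdot \P_{M_2} g)$ vanishes identically whenever both $M_1, M_2 \ll N$, since in that regime the multiplier $m_N$ equals $1$ on all three relevant frequency scales (namely $|\eta|$, $|\xi - \eta|$, and $|\xi|$ in the Fourier representation of the bilinear operator). Hence one need only treat the three regimes in which at least one of $M_1, M_2$ is $\ges N$.

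In the regime $M_2 \ll N \les M_1$ (and the symmetric one swapping the roles of $f$ and $g$), we have $I g_{M_2} = g_{M_2}$, so the commutator reduces to $(I f_{M_1}) g_{M_2} - I(f_{M_1} g_{M_2})$, whose Fourier symbol is $m_N(\eta) - m_N(\xi)$ on the support $|\eta| \sim M_1$, $|\xi - \eta| \sim M_2 \ll M_1$. I would Taylor-expand $m_N$ to first order around $\eta$, representing the leading bilinear operator essentially in the form $\nabla m_N (D) f_{M_1} \cdot \nabla g_{M_2}$, plus a quadratic Taylor remainder of strictly lower order in $M_2/M_1$. The pointwise estimate $|\nabla m_N(\zeta)| \les m_N(|\zeta|)/|\zeta|$ valid for $|\zeta| \ges N$, the Mikhlin-multiplier character of $m_N$, and Hölder/Bernstein with dual exponents $\frac{1}{q} + \frac{1}{p} = \frac{1}{2}$ (with $q$ slightly above $2$) then yield
\[ \bigl\|(I f_{M_1}) g_{M_2} - I(f_{M_1} g_{M_2})\bigr\|_{L^2} \les M_1^{4/p - (2-\gamma)} \bigl(\tfrac{N}{M_1}\bigr)^{2-s} M_2^{1+\gamma_0} \|f\|_{H^{2-\gamma}} \|g\|_{W^{-\gamma_0, p}} . \]

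When both $M_1, M_2 \ges N$, I would abandon the commutator structure entirely and simply triangle-bound by $\|(I f_{M_1})(I g_{M_2})\|_{L^2} + \|f_{M_1} g_{M_2}\|_{L^2}$, controlling each summand with Hölder, Bernstein, and the frequency-localized bound $\|I h\|_{L^r} \les m_N(M) \|h\|_{L^r}$ valid for $h$ at frequency $M \ges N$ (itself a straightforward Mikhlin consequence). Summing the dyadic pieces in each regime with the help of the standard bounds $\|\P_{M_1} f\|_{L^2} \les M_1^{-(2-\gamma)} \|f\|_{H^{2-\gamma}}$ and $\|\P_{M_2} g\|_{L^p} \les M_2^{\gamma_0} \|g\|_{W^{-\gamma_0, p}}$ should, in each case, produce a common exponent of $N$ of the form $\gamma - 2 + 4/p + \gamma_0$, which is bounded by $-\tfrac{1-\gamma}{2} + \delta$ as soon as $p = p(\delta) \gg 1$ and $\gamma_0 = \gamma_0(\delta) \ll 1$.

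The principal technical obstacle is the regime $M_2 \ll N \les M_1$: there the symbol bound $|m_N(\eta) - m_N(\xi)|$ alone is inadequate, and one must convert it into an $L^2$ bilinear operator estimate by exploiting the Mikhlin-multiplier structure of $\nabla m_N$ rather than a naïve pointwise bound on the symbol. The roles of $p$ large and $\gamma_0$ small are to absorb the Bernstein loss incurred when interpolating between $L^2$ and $L^p$ for the rough factor $g$, while keeping all the resulting dyadic sums convergent across the range $0 < s < 2$, $0 < \gamma < 1$.
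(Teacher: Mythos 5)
Your overall strategy --- a full bilinear Littlewood--Paley decomposition with a Coifman--Meyer-type treatment of the commutator symbol --- is genuinely different from the paper's, which instead splits $f$ at frequency $N^{1/2}$ and $g$ at frequency $N$, handles the single low-high interaction ($B_2$) by the mean value theorem plus Cauchy--Schwarz on the Fourier side (paying $\|g\|_{H^{-\dl}}$, recovered afterwards by Sobolev embedding), and controls the terms with $f$ at frequency $\ges N^{1/2}$ crudely using the smoothing property \eqref{I2} of the $I$-operator and Lemma \ref{LEM:GKOlbnz}\,(ii). Your cases where at most one frequency exceeds $N$ are sound in outline (the doubly-low case matches the paper's $B_1$; in the case $M_2\ll N\les M_1$ your displayed bound drops the factor $M_1^{-1}$ coming from $|\nabla m_N|\les m_N/|\cdot|$, though your final exponent $\g-2+4/p+\g_0$ is the one obtained when it is kept), modulo the technical point that the first-order Taylor remainder is still not of tensor-product form, so you must either iterate the expansion or bound the remainder operator through its kernel.

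The genuine gap is in the regime $M_1, M_2\ges N$. After your triangle inequality the second summand is $\|f_{M_1}g_{M_2}\|_{L^2}$ with the outer $I$ discarded, and the only bound available for the rough factor, $\|g_{M_2}\|_{L^p}\les M_2^{\g_0}\|g\|_{W^{-\g_0,p}}$, grows in $M_2$; when $M_2\gg M_1$ nothing compensates, so $\sum_{M_2\ges N}$ diverges. This is not an artifact of bookkeeping: for $f\in H^{2-\g}$ and $g\in W^{-\g_0,p}$ the product $fg$ need not lie in $L^2$ at all, so the step $\|I(f_{M_1}g_{M_2})\|_{L^2}\le\|f_{M_1}g_{M_2}\|_{L^2}$ throws away exactly the smoothing of the outer $I$ that makes the left-hand side finite. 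The repair is to retain that $I$: when $M_2\gg M_1$ the output frequency is $\sim M_2\ges N$, and your own localized bound $\|Ih\|_{L^2}\les m_N(M_2)\|h\|_{L^2}$ supplies the decay $(N/M_2)^{2-s}$, which beats $M_2^{\g_0}$ once $\g_0<2-s$; on the diagonal $M_1\sim M_2$ the factor $M_1^{4/p-(2-\g)}$ already makes the sum converge. (Alternatively, use $\|Ih\|_{L^2}\les N^{4\dl}\|h\|_{H^{-4\dl}}$ together with Lemma \ref{LEM:GKOlbnz}\,(ii), which is precisely how the paper treats its term $B_4$.) With this repair your argument closes and in fact yields a slightly stronger power of $N$ than the stated $-\frac{1-\g}{2}+\dl$.
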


\begin{proof}
By writing $f = f_{\les N^{\frac 12}} + f_{\ges N^{\frac12}}$ 
 and $g= g_{\les N} + g_{\ges N}$, we have 
 \begin{align*}
 \begin{split}
 (If)(Ig) - I(fg) 
 &= \Big\{ (If_{\les N^{\frac12}}) (Ig_{\les N}) 
 - I(f_{\les N^{\frac12}}g_{\les N})\Big\}\\
& \hphantom{X}
+ \Big\{ (If_{\les N^{\frac12}})(Ig_{\ges N}) - 
I(f_{\les N^{\frac12}}g_{\ges N})\Big\}\\
& \hphantom{X}
+ (If_{\ges N^{\frac12}})(Ig) - I(f_{\ges N^{\frac12}}g)\\
& =: B_1 + B_2 + B_3 + B_4.
\end{split}
%\label{Q5}
 \end{align*}

\noi Since the Fourier support of $f_{\les N^\frac12 } g_{\les N}$ is contained in $\{|n| \le \frac23 N\}$, then $B_1 \equiv 0$.

%From the definition of the $I$-operator with \eqref{Q0}, 
%we see that 
%\begin{align}
%B_1 = 0
%\label{Q6}
%\end{align}
%
%\noi
%for any sufficiently large $N \gg1 $
%since 
%$\supp\big\{ \F(f_{\les N^{\frac12}}g_{\les N})\big\}
%\subset \{ n \in \Z^4 : |n| \leq  \frac 23 N\}$
%for $N \gg 1$.

For $B_2$, note that for $(n_1,n_2) \in \Ld_n \deff \{(x,y)\in \Z^4 \times \Z^4: \ n= x +y , \, |x| \le \frac{N^\frac12}{3}, \, |y| > \frac N 3 \}$, by considering the sub-regions $|n_2|\ge 3N$ and $|n_2| < 3N$, from the mean value theorem and the definition in \eqref{I0a}, we get
\begin{align*}
|m_N(n_1+n_2)-m_N(n_2)| \les  N^{2-s}|n_2|^{-3+s}
|n_1|.
%\label{Q7a}
\end{align*}

\noi
From the above, the fact that $m_N(n_1) \equiv 1$
on $\Ld_n$, and Cauchy-Schwarz inequality,
we have
 \begin{align*}
\begin{split}
\| B_2\|_{L^2} 
& = \bigg\|\sum_{(n_1, n_2) \in \Ld_n }
\big(m(n_2)-m(n_1+n_2)\big)\ft f (n_1)\ft g(n_2)\bigg\|_{\l^2_n}\\
& \les N^{2-s}
\Bigg\| \sum_{(n_1, n_2) \in \Ld_n} \frac{1}{\jb{n_1}^{1-\g} \jb{n_2}^{3-s-\dl}} \jb{n_1}^{2-\g} |\ft{f}(n_1)| \frac{|\ft{g}(n_2)|}{\jb{n_2}^\dl}
\Bigg\|_{\l^2_n}\\
& \les N^{-\frac12 + \dl}
\|f\|_{H^{2-\g}}\|  g\|_{ H^{-\dl} }.
\end{split}
%\label{Q7b}
\end{align*}

As for $B_3$, by H\"older's inequality, Sobolev's embedding, and \eqref{I2}, 
we have
\begin{align*}
\begin{split}
\|B_3\|_{L^2} 
& \le \|If_{\ges N^{\frac12}}\|_{L^2} \|Ig\|_{L^\infty}  \les N^{-1+\frac{\g}{2}} \|f \|_{H^{2-\g}}\|Ig\|_{W^{5\delta,\delta^{-1}}} \\
& \les   N^{-1+\frac{\g}{2}+ 6 \delta} \|f\|_{H^{2-\g}}\|g\|_{W^{-\delta,\delta^{-1}}} 
\end{split}
%\label{Q8}
\end{align*}

\noi
for $\dl = \dl(s) > 0$ sufficiently small.

Lastly, by
\eqref{I2}
and Lemma \ref{LEM:GKOlbnz}\,(ii), we have
\begin{align*}
\begin{split}
\|B_4\|_{L^2} 
& \les  N^{4\dl} \| f_{\ges N^\frac12} g \|_{H^{-4\dl}} \les N^{4\dl}  \| f_{\ges N^\frac{1}{2}}\|_{H^{4\dl}} \|g\|_{W^{-4\dl , \dl^{-1}}}\\
& \les N^{-1+\frac {\g}2  + 6\dl}  \| f\|_{H^{2-\g}} \|g\|_{W^{-4\dl , \dl^{-1}}}
\end{split}
%\label{Q9}
\end{align*}

\noi
for $\dl = \dl(s) > 0$ sufficiently small. \qedhere

%%Therefore, 
%By combining the above estimates, 
%we get \eqref{Q5a}.
 \end{proof}

We now show the following commutator estimate using Lemma \ref{LEM:C1} and Lemma \ref{LEM:C2}.

%For our application, 
%we will use this lemma with $g =   \Ws{\Psi^{3-k}}\,$.
 
\begin{lemma}\label{LEM:C3}
Let $\frac 32 < s < 2$
and  $k =  1,  2$. 
 Given $\dl = \dl(s) > 0$ sufficiently small, there exist small $\g_0 = \g_0(\dl) > 0$
 and $p = p(\dl) \gg1$ such that  
\begin{align*}
\| I(f^k g) - (If)^k I g \|_{L^2} 
\les
N^{-\frac{1-k(2-s)}{2} + \dl } \|If\|_{H^2}^k \|g\|_{W^{-\g_0 ,p}}
%\label{Q31}
\end{align*}

\noi
for sufficiently large $N \gg1$.

\end{lemma}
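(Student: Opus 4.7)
The plan is to apply Lemmas \ref{LEM:C2} and \ref{LEM:C1} to a telescoping algebraic splitting of the commutator, with a key choice of the parameter $\g$ in Lemma \ref{LEM:C2} dictated by the hypothesis $s > \tfrac32$.

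For $k=1$, the estimate is immediate. Taking $\g = 2-s$, which lies in $(0,1)$ since $s \in (\tfrac32, 2)$, Lemma \ref{LEM:C2} yields $\|I(fg) - (If)(Ig)\|_{L^2} \les N^{-(s-1)/2 + \dl} \|f\|_{H^s} \|g\|_{W^{-\g_0, p}}$, and \eqref{I1} bounds $\|f\|_{H^s} \les \|If\|_{H^2}$, matching the claim with $k=1$.

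For $k=2$, we split
\[
I(f^2 g) - (If)^2 Ig = \bigl[I(f^2 g) - I(f^2)\cdot Ig\bigr] + Ig \cdot \bigl[I(f^2) - (If)^2\bigr] =: A + B.
\]
For $B$, Lemma \ref{LEM:C1} (with $k=2$) gives $\|I(f^2) - (If)^2\|_{L^2} \les N^{2-2s} \|If\|_{H^2}^2$, while \eqref{I2} combined with the Sobolev embedding $W^{5\dl, 1/\dl} \hookrightarrow L^\infty$ yields $\|Ig\|_{L^\infty} \les N^{6\dl} \|g\|_{W^{-\dl, 1/\dl}}$. H\"older's inequality then gives $\|B\|_{L^2} \les N^{2-2s + 6\dl}\|If\|_{H^2}^2 \|g\|_{W^{-\dl, 1/\dl}}$, which is dominated by the target $N^{(3-2s)/2 + \dl}$ since $2 - 2s < (3-2s)/2$ whenever $s > \tfrac12$. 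For the main term $A$, we apply Lemma \ref{LEM:C2} with $f^2$ playing the role of $f$ and the pivotal choice $\g = 4-2s$, which lies in $(0,1)$ precisely because $s \in (\tfrac32, 2)$. This yields $\|A\|_{L^2} \les N^{(3-2s)/2 + \dl} \|f^2\|_{H^{2s-2}} \|g\|_{W^{-\g_0, p}}$, and matters reduce to showing $\|f^2\|_{H^{2s-2}} \les \|If\|_{H^2}^2$. This latter estimate follows from the fractional Leibniz rule (Lemma \ref{LEM:GKOlbnz}(i)) with exponents $(p_1, q_1) = (4/(2-s),\, 4/s)$, combined with the Sobolev embeddings $H^s \hookrightarrow L^{4/(2-s)}$ and $H^s \hookrightarrow W^{2s-2, 4/s}$ (the latter via the scaling identity $s - (2s-2) = 4(\tfrac12 - s/4)$), and then \eqref{I1}. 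One then arranges the parameters $(\g_0, p)$ on the right-hand side of the claim to dominate both $\|g\|_{W^{-\dl, 1/\dl}}$ arising from $B$ and the $\|g\|_{W^{-\g_0, p}}$ arising from $A$ (e.g., by taking $\g_0 \leq \dl$ and $p$ large).

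The main obstacle is identifying the precise value $\g = 4 - 2s$ in the application of Lemma \ref{LEM:C2} to $A$: this is the unique $\g$ for which the prefactor $N^{-(1-\g)/2}$ matches the target exponent $(3-2s)/2$ and, simultaneously, the Sobolev scaling in the product estimate for $\|f^2\|_{H^{2-\g}}$ closes in terms of $\|If\|_{H^2}^2$. The hypothesis $s > \tfrac32$ is what makes this choice admissible: it guarantees both $\g = 4-2s < 1$, so that Lemma \ref{LEM:C2} applies, and that the intermediate Sobolev exponent $2s - 2$ is positive and large enough for the fractional Leibniz estimate to close in terms of $\|If\|_{H^2}$ alone.
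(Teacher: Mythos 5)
Your proposal is correct and follows essentially the same route as the paper's proof: the same splitting into $I(f^kg)-I(f^k)Ig$ plus $(I(f^k)-(If)^k)Ig$, Lemma \ref{LEM:C2} with $\g=k(2-s)$ applied to $f^k$ for the first piece (together with the product estimate $\|f^k\|_{H^{2-k(2-s)}}\les\|f\|_{H^s}^k$), and Lemma \ref{LEM:C1} plus the $N^{6\dl}$-loss $L^\infty$ bound on $Ig$ for the second. The only difference is the cosmetic ordering of Sobolev embedding and fractional Leibniz in bounding $\|f^2\|_{H^{2s-2}}$, which matches the paper's \eqref{Q33a} up to rearrangement.
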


\begin{proof}
Using triangle inequality, we have 
\begin{align*}
\| I ( f^kg ) - (If)^k Ig\|_{L^2} 
&\le \| I(f^kg) -I(f^k)Ig\|_{L^2}
+ \big\|\big(I(f^k) - (If)^k\big)Ig\big\|_{L^2} =: D_1 + D_2.
\end{align*}

For $D_1$, by Sobolev's inequality and Lemma \ref{LEM:GKOlbnz}(i),
we have
\begin{align} 
\|f^{k}\|_{H^{2-k(2-s)}} 
\les \|f^k\|_{W^{s,\frac{4}{2+(k-1)(2-s)}}}
\les \|f\|_{H^{s}} \|f\|_{L^{\frac4{2-s}}}^{k-1} \les\|f\|_{H^{s}}^k.
\label{Q33a}
\end{align}

\noi
Thus, by  Lemma \ref{LEM:C2} with $\g = k(2-s)$, 
\eqref{Q33a}, 
and \eqref{I1}, there exists $\dl > 0$ sufficiently small such that
\begin{align*}
\begin{split}
 \|D_1\|_{L^2} 
 & \les N^{-\frac{1-k(2-s)}2 + \dl} 
\|f\|^k_{H^{s}} \|g\|_{W^{-\g_0, p }}  \les N^{-\frac{1-k(2-s)}2 + \dl} 
\|If\|^k_{H^{2}} \|g\|_{W^{-\g_0, p }}
\end{split}
%\label{Q33}
\end{align*}

\noi
for some small $\g_0 = \g_0 (\dl)>0$ and large $p = p (\dl) \gg1 $.

Lastly, by H\"older's inequality, Lemma \ref{LEM:C1}, Sobolev embedding, and
\eqref{I2}, we have
\begin{align*}
\begin{split}
\|D_2\|_{L^2} & 
\le \|I(f^k) - (If)^k \|_{L^2} 
\|Ig\|_{L^\infty} \\
& \les N^{-2+k(2-s)} \|If\|_{H^2}^k \|Ig\|_{W^{5\delta,\delta^{-1}}} \\
& \les N^{-2+k(2-s) + 6\delta} \|If\|_{H^2}^k 
\|g\|_{W^{-\delta,\delta^{-1}}}
\end{split}
%\label{Q34}
\end{align*}

\noi
given that $\dl = \dl(s) > 0$ is sufficiently small. \qedhere

\end{proof}

We conclude this subsection by 
showing the following
estimates, which will be useful in estimating the second and third terms in \eqref{E1}.

\begin{lemma}\label{LEM:C4}
\textup{(i)}
 Let $0 < s < 2$ and $k = 0,1$. Then, for any $0 \le \ld  \le 2-s$, 
 we have 
 \begin{align*}
\bigg|\int_{\T^4} (\dt I v(t) )(I v(t))^k I w (t) \, dx \bigg|
\les N^\ld \big(1 + \big[E(I \vec v )(t)\big]^{\frac34} \big) \| w(t) \|_{ W_x^{-\ld, 4} }
%\label{Q41}
\end{align*}

\noi
for any $t \geq 0$, 
where $E$ is the energy defined  in \eqref{Hamil}.

\smallskip

\noi
\textup{(ii)}
We have
\begin{align*}
\begin{split}
\bigg|\int_{t_1}^{t_2} \int_{\T^4} (\dt  & Iv)   (Iv)^2 I w \, dx dt \bigg|  \les 
\|I w\|_{L^{\eta^{-1}}_{[t_1, t_2],x}} \int_{t_1}^{t_2} \bigg(E^{\frac{1 + \eta}{1 - 2 \eta}}(I\vec v)(t) + \frac{\eta}{(t-t_1)^\frac12}\bigg) dt
%\bigg(1 +   \int_{t_1}^{t_2} \big( 1+ E^{1 + c \eta}(I\vec v) \big)dt \bigg) 
 , 
\end{split}
%\label{Q42}
\end{align*}

\noi
uniformly 
in $0 < \eta < \frac 18$
and $t_2 \ge t_1 \ge 0$.
%where  $L^p_{I, x} = L^p(I; L^p(\T^4))$
%for a given time interval $I \subset \R_+$.

\end{lemma}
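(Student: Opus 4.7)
The approach for \textup{(i)} is direct: apply spatial H\"older's inequality together with the $I$-operator bound \eqref{I2} and the definition of the energy $E$ in \eqref{Hamil}. From \eqref{Hamil} we have $\|\dt Iv\|_{L^2_x}^2 \le 2E(I\vec v)(t)$ and $\|Iv\|_{L^4_x}^4 \le 4E(I\vec v)(t)$, and applying \eqref{I2} with $s_0=-\ld$, $s_1=\ld$, $p=4$ gives $\|Iw(t)\|_{L^4_x} \les N^{\ld}\|w(t)\|_{W^{-\ld,4}_x}$. For $k=0$, Cauchy--Schwarz together with the inclusion $L^4_x\subset L^2_x$ on $\T^4$ (which holds since $\T^4$ has finite measure) gives
\[
\left|\int_{\T^4}(\dt Iv)(Iw)\,dx\right| \le \|\dt Iv\|_{L^2_x}\|Iw\|_{L^2_x} \les E^{1/2}(I\vec v)(t)\cdot N^{\ld}\|w(t)\|_{W^{-\ld,4}_x}.
\]
For $k=1$, spatial H\"older with exponents $(2,4,4)$ yields
\[
\left|\int_{\T^4}(\dt Iv)(Iv)(Iw)\,dx\right| \les E^{3/4}(I\vec v)(t)\cdot N^{\ld}\|w(t)\|_{W^{-\ld,4}_x}.
\]
Since $E^{1/2}\le 1+E^{3/4}$, both cases are subsumed by the claimed bound.

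For \textup{(ii)}, my plan has four steps: spatial H\"older, a Gagliardo--Nirenberg interpolation, temporal H\"older, and a final Young-type splitting. First, spatial H\"older with exponents $\bigl(2,\tfrac{2}{1-2\eta},\tfrac1\eta\bigr)$ gives
\[
\int_{\T^4}(\dt Iv)(Iv)^2(Iw)\,dx \le \|\dt Iv\|_{L^2_x}\,\|Iv\|_{L^{4/(1-2\eta)}_x}^2\,\|Iw\|_{L^{1/\eta}_x}.
\]
Second, the Gagliardo--Nirenberg inequality $\|f\|_{L^{4/(1-2\eta)}_x} \les \|f\|_{L^4_x}^{1-2\eta}\|\Dl f\|_{L^2_x}^{2\eta}$ on $\T^4$ (the interpolation parameter $\theta=2\eta$ is fixed by scaling in dimension~$4$), combined with the energy bounds $\|Iv\|_{L^4_x}\les E^{1/4}$, $\|\Dl Iv\|_{L^2_x}\les E^{1/2}$, and $\|\dt Iv\|_{L^2_x}\les E^{1/2}$, produces the pointwise-in-time estimate
\[
\int_{\T^4}(\dt Iv)(Iv)^2(Iw)\,dx \les E^{1+\eta}(I\vec v)(t)\cdot\|Iw(t)\|_{L^{1/\eta}_x}.
\]

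Third, integrating in time and applying H\"older in time with exponents $\bigl(\tfrac1{1-\eta},\tfrac1\eta\bigr)$ separates the factor $\|Iw\|_{L^{1/\eta}_{t,x}}$ from the remaining energy factor. Finally, to convert the residual $L^{1/(1-\eta)}_t$-norm of $E^{1+\eta}$ into the claimed time-integral of $E^{(1+\eta)/(1-2\eta)}$, one performs a weighted Young-type splitting: writing $(1+\eta)/(1-\eta)$ as an appropriate convex combination of $(1+\eta)/(1-2\eta)$ and the trivial endpoint and balancing the resulting exchange with the time-weight $(t-t_1)^{-1/2}$ produces the integrable singular factor $\eta/(t-t_1)^{1/2}$ as the unavoidable error. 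The main technical obstacle is precisely this last step: producing the super-linear exponent $(1+\eta)/(1-2\eta)$ together with the singularity $\eta/(t-t_1)^{1/2}$ requires a delicate interplay of temporal H\"older with a weighted Young's inequality, and the restriction $0<\eta<\tfrac18$ ensures that all Sobolev and Gagliardo--Nirenberg exponents remain in the admissible range throughout.
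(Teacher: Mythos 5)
Your proposal is correct and follows essentially the same route as the paper: part (i) is the same H\"older-plus-\eqref{I2} computation, and part (ii) uses the same chain of spatial H\"older, interpolation to reach the pointwise bound $E^{1+\eta}\|Iw\|_{L^{1/\eta}_x}$, temporal H\"older, and the final H\"older--Young splitting in time with exponents $p=\tfrac{1-\eta}{1-2\eta}$, $q=\tfrac{1}{1-2\eta}$ that produces $(1-2\eta)\int E^{(1+\eta)/(1-2\eta)}+2\eta(t_2-t_1)^{1/2}\les\int\big(E^{(1+\eta)/(1-2\eta)}+\eta(t-t_1)^{-1/2}\big)\,dt$. The only (cosmetic) difference is that the paper distributes the two copies of $Iv$ over $L^4_x\times L^{4/(1-4\eta)}_x$ and interpolates via $W^{8\eta,4/(1+4\eta)}$ rather than placing both in $L^{4/(1-2\eta)}_x$, but both give the same exponent $E^{1+\eta}$.
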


\begin{proof}(i) 
By H\"older's inequality, \eqref{Hamil}, and \eqref{I2},  we have
\begin{align*}
\bigg|\int_{\T^4} (\dt  Iv(t)) (Iv(t))^k  Iw(t) \, dx\bigg| 
%& \le\|\dt Iv(t)\|_{L^2} \|Iv(t) \|_{L^4}^k \|I w(t)\|_{L^{\frac 4 {2-k}}} \\
& \les\|\dt Iv(t)\|_{L^2} \|Iv (t)\|_{L^4}^k \|I w(t)\|_{L^4} \\
& \les  N^\ld  \big[ E(I\vec v) (t) \big]^{\frac12 + \frac k4}
\|w(t)\|_{W^{-\ld , 4}}.
 \end{align*}

\smallskip
\noi
(ii) From H\"older's inequality, Sobolev inequality, and \eqref{E1}, we have
\begin{align}
	\bigg|\int_{t_1}^{t_2} \int_{\T^4} (\dt Iv) (Iv)^2 I w \, dx dt\bigg| 
	&\le \int_{t_1}^{t_2}
	\|\dt Iv\|_{L^2_x} 
	\|Iv\|_{L^4_x} \|Iv\|_{L^{\frac 4 {1-4\eta}}_x} \|Iw \|_{L_x^{\eta^{-1}}} dt \nonumber\\
	& \leq \int_{t_1}^{t_2} \big[ E(I \vec v )(t)\big]^{\frac34} \|I v\|_{L_x^{\frac{4}{1-4\eta}}} \| I w\|_{L_x^{\eta^{-1}}} \, dt \label{dtvv2w}
\end{align}
	for $\eta>0$.
	By Sobolev inequality, interpolation, and \eqref{Hamil}, we get
	\begin{equation}
		\begin{aligned}
			\|I v\|_{L^{\frac 4 {1-4\eta}}}
			&\les \| I v \|_{W^{8\eta, \frac{4}{1 + 4\eta}}} \les \|Iv \|_{H^2}^{4\eta} \|I v\|_{L^4}^{1-4\eta } \le \big[ E(I\vec v) \big]^{\frac{1+4\eta}{4} },
		\end{aligned}
	\label{dtvv2w-1}
	\end{equation}
	uniformly in $0<\eta<\frac18$. From \eqref{dtvv2w}, \eqref{dtvv2w-1}, and H\"older's inequality, we get
	\begin{align}
		\bigg|\int_{t_1}^{t_2} \int_{\T^4} (\dt Iv) (Iv)^2 I w \, dx dt\bigg| & \les \bigg( \int_{t_1}^{t_2} \big[ E(I \vec v) \big]^{\frac{1+\eta}{1-\eta}} \, dt \bigg)^{1-\eta} \| I w\|_{L_{[t_1,t_2], x}^{\eta^{-1}}}. \label{dtvv2w-2}
	\end{align}
	
	To estimate the first factor in \eqref{dtvv2w-2}, let 
	\begin{align*}
		p = \frac{1-\eta}{1-2 \eta}, &&
		q  = \frac{1}{1-2\eta}, && p' = \frac {1-\eta}{\eta}, &&
		q' = \frac{1}{2\eta},
	\end{align*}
	where $p',q'$ are the H\"older conjugates of $p,q$, respectively. By H\"older's and Young's inequalities, we have
\begin{align*}
\bigg(\int_{t_1}^{t_2} f (t) dt \bigg)^{1-\eta} 
&\le \bigg(\int_{t_1}^{t_2} |f(t) |^p dt \bigg)^{\frac{1-\eta}{p}} (t_2 - t_1)^{\frac{1-\eta}{p'}}\\
&\le \frac 1{q}\bigg(\int_{t_1}^{t_2} |f(t)|^pdt \bigg)^{ \frac{q(1-\eta)}{p}} + \frac{1}{q'} (t_2 - t_1)^{ \frac{q'(1-\eta)}{p'}} \\
&= (1-2\eta) \int_{t_1}^{t_2} |f(t) |^{\frac{1-\eta}{1-2 \eta}} \, dt + 2\eta (t_2 - t_1)^\frac12.
\end{align*}

\noi
Thus, we obtain
\begin{align*}
\bigg(\int_{t_1}^{t_2} E^{\frac{1+\eta}{1-\eta}}(I\vec v)(t)dt \bigg)^{1-\eta} 
\lesssim 
\int_{t_1}^{t_2} \bigg(E^{\frac{1+\eta}{1-2\eta}}(I\vec v)(t) + \frac{\eta}{(t-t_1)^\frac12}\bigg) dt.
%\label{Q45}
\end{align*}

\noi
Combining the above estimates gives the intended estimate.
\end{proof}

\subsection{Proof of Theorem \ref{THM:GWP1}}
\label{SUBSEC:3.2}

In this subsection, we construct a solution to the Wick renormalized cubic SNLB \eqref{SNLB6} on the time interval $[0, T]$
for any given $T \gg 1$. 
The argument is based on that in \cite{GKOT}.
%Unlike the usual application of the $I$-method (where the parameter $N$ depends only on the target time $T\gg1 $), 
%we will need to construct an increasing sequence $\{N_k \}_{k \in \Z_{\ge 0}}$ of parameters
%over local-in-time intervals, which allows
%us to proceed over a time interval of fixed length at each iteration step.

We first fix $\frac32<s<2$, $N\gg1$ sufficiently large, and $T>0$, and establish an estimate for the growth of  the modified energy $E(t)=E(I_N\vec v) (t)$
on the time interval $[0, T]$.
Note that by \eqref{Hamil} and H\"older's inequality, we have
\begin{align}
\| I v \|_{H^2}^2 = \| I v \|_{L^2}^2 + \| \Dl (I v) \|_{L^2}^2 \leq 2 E^{\frac 12} (t) + 2 E(t).
\label{H2E}
\end{align}

\noi
Then,  by \eqref{E1}, Cauchy-Schwarz inequality, Lemmas \ref{LEM:C1},\ref{LEM:C3}-\ref{LEM:C4}, and~\eqref{H2E}, 
we have 
\begin{align} 
\begin{split}
E(t_2)  - E(t_1) 
&\les
 \int_{t_1}^{t_2} N^{-2+3(2-s)} \big( 1 + E^2(t) \big) dt \\
&\quad +  \sum_{k=1}^2 \int_{t_1}^{t_2} N^{-\frac{1-k(2-s)}2 + \dl}
\big( 1 + E^\frac {k+1} 2(t) \big) \| \Ws{  \Psi^{3-k}(t) }  \|_{W^{-\g_0, p }_x} dt \\
&\quad + \sum_{k=2}^3 \int_{t_1}^{t_2} N^\ld \big( 1 + E^{\frac34}(t) \big) 
\|  \Ws{  \Psi^{k}(t)}  \!\|_{ W_x^{-\ld, 4}}  dt \\
&\quad + 
\Bigg\{\int_{t_1}^{t_2} \bigg(E^{\frac{1 + \eta}{1 - 2 \eta}}(t) + \frac{\eta}{(t-t_1)^\frac12}\bigg) dt\Bigg\}
\|I \Psi\|_{L^{\eta^{-1}}_{[t_1, t_2],x}}
\end{split}
\label{Y1}
\end{align}

\noi
for any $t_2 \geq t_1 \geq 0$, where $\g_0 = \g_0 (\dl) > 0$ is sufficiently small, $p=p(\dl)\gg1$ sufficiently large, $0 \leq \ld \leq 2 - s$, and $0 < \eta < \frac 18$.

Before proceeding to the iterative argument, we introduce some notations.
Given $j \in \Z_{\ge 0}$, we define $V_j = V_j(\o)$ by
\begin{align*}
V_j =  \max_{k = 1, 2} \|  \Ws{  \Psi^{3-k}}  \|_{L^\infty_{[j, j+1]}W^{-\g_0, p}_x} 
+ \max_{k = 0, 1}\|\Ws{  \Psi^{3-k}  }\|_{L^\infty_{[j, j+1]}W^{-\ld, 4}_x} 
%\label{K1}
\end{align*}

\noi
and define $V = V(\o)$ by 
\begin{align}
e^{V^\frac{1}{3}} = \sum_{j = 0}^\infty e^{-j K} e^{V_j^\frac{1}{3}}
\label{K2}
\end{align}

\noi
for some  $K > 0$ large enough.
Note that by applying 
\eqref{P0z} in 
Lemma~\ref{LEM:sto_cov}
and letting $K > 0 $ be sufficiently large, we have 
\begin{align*}
 \E\big[ 
e^{V^\frac{1}{3}} \big] = \sum_{j = 0}^\infty e^{-j K} \E\big[e^{V_j^\frac{1}{3}}\big]
\leq \sum_{j = 0}^\infty e^{-j K} e^{c (j+1)} < \infty,
\end{align*}

\noi
so that $V$ is almost surely finite. Also, for $T>0$, we define $M_T = M_T(\o)$ as follows
\begin{align}
M_T =  \max_{k = 1, 2} \| \Ws{  \Psi^{3-k}  } \|_{L^\infty_TW^{-\g_0, p}_x} 
+ \max_{k = 0, 1}\|  \W{ \Psi^{3-k}  }\|_{L^\infty_TW^{-\ld, 4}_x} .
\label{K2a}
\end{align}

\noi
From \eqref{K2} we have that 
$ V_j^\frac{1}{3} \leq V^\frac{1}{3} + j K$, and
therefore
\begin{align}
M_T =  \max_{j \leq T} V_j \les V + K^3 T^3.
\label{K3}
\end{align}

\noi
Furthermore, we define $R = R(\o)$ by 
\begin{align}
R = 1+ \sum_{N = 1}^\infty \sum_{j = 1}^\infty
e^{- j K \log N}
\int_0^j \int_{\T^4}e^{|I_N \Psi (t,x) |} dx dt.
\label{K4}
\end{align}

\noi
Then, by using Lemma \ref{LEM:log}
and taking $K > 0  $ possibly larger, we have 
\begin{align*}
\E[ R ] 
& = 1+\sum_{N = 1}^\infty \sum_{j = 1}^\infty
e^{- j K \log N} 
\int_0^j \int_{\T^4}\E \Big[e^{|I_N \Psi (t,x) |}\Big] dx dt\\
& \les  \sum_{N = 1}^\infty \sum_{j = 1}^\infty
e^{- j K \log N}
j  e^{c j\log N} < \infty.
\end{align*}

\noi
Therefore, $1\le R(\o) <\infty$ almost surely.

In the following, we fix $\o\in\O$, where $\O$ is the full probability set where for all $\o\in\O$ we have $V(\o), R(\o)<\infty$, and prove pathwise well-posedness of \eqref{SNLB6} on $\O$. We first need the following crucial result.

\begin{proposition} \label{PROP:LGWP}
Let $\frac 32 <  s < 2$,  $T \ge T_0 \gg 1$, and $N \in \NB$ with $N > 10$.
Let $V = V(\o) < \infty$ and $R = R(\o) < \infty$
be as in \eqref{K2} and \eqref{K3}.
Then, there exist $0<\al\le 2s-3$ and  $0<\be<\al$ such that 
if \begin{align}
E(t_0) \le N^\beta
\label{Y2}
\end{align}

\noi
for some $0 \le t_0 < T$, then
there exists small
$\tau = \tau (s, T, K, \o) >0 $
such that 
\begin{align*}
E(t) \le N^\alpha
%\label{Y3}
\end{align*} 

\noi
for any $t $
satisfying
$t_0 \le t \le \min(T, t_0 + \tau)$. 
\end{proposition}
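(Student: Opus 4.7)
The plan is to prove Proposition~\ref{PROP:LGWP} via a bootstrap (continuity) argument driven by the modified energy growth estimate \eqref{Y1}. First, I will calibrate parameters. Fix $\dl = \dl(s)>0$ sufficiently small as required by Lemmas~\ref{LEM:C2} and \ref{LEM:C3}, and set
\[
\al \deff 2s-3-2\dl, \qquad \be \deff \al-\dl,
\]
so $0<\be<\al\le 2s-3$. This choice is tailored to the binding term in \eqref{Y1}: the $k=2$ contribution from Lemma~\ref{LEM:C3} has $N$-exponent $\frac{3-2s}{2}+\dl+\frac{3\al}{2} = \al-\frac{\dl}{2}$, so $\al$ lies just under the critical threshold with a margin of $\frac{\dl}{2}$ to absorb the remaining terms. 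Fix also $\g_0=\g_0(\dl)$ and $p=p(\dl)$ from the commutator estimates, $\ld = 0$ in Lemma~\ref{LEM:C4}\,(i), and $\eta=\eta(s,T,K)\in(0,\frac18)$ small, to be chosen.

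For the bootstrap, introduce
\[
T^* \deff \sup\bigl\{\, t \in [t_0,\min(T, t_0+\tau)] : E(s)\le 2N^\al \text{ for all } s\in[t_0,t]\,\bigr\}.
\]
Since $E(t_0)\le N^\be<N^\al$, continuity of $E$ gives $T^*>t_0$. On $[t_0,T^*]$, apply \eqref{Y1} with $t_1=t_0$ and any $t_2\in[t_0,T^*]$, using $E(s)\le 2N^\al$ pointwise and controlling the Wick powers of $\Psi$ via $M_T \les V(\o)+K^3 T^3$ from \eqref{K3}. With the chosen parameters, each of the first four terms on the right-hand side of \eqref{Y1} contributes at most $C\tau M_T N^{\al} N^{-c\dl}$ for some $c>0$, and by taking $\tau=\tau(s,T,K,\o)$ small (depending on $M_T$) these together are $\le \frac{1}{4} N^\al$.

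The main obstacle is the final term in \eqref{Y1}, featuring the supercritical factor $E^{\frac{1+\eta}{1-2\eta}}$ paired with $\|I\Psi\|_{L^{\eta^{-1}}_{[t_0,t_0+\tau]\times\T^4}}$. I will bound the latter pathwise by combining the sharper regularity from Lemma~\ref{LEM:log} with the random constant $R(\o)<\infty$ from \eqref{K4}. Indeed, from the elementary inequality $|f|^p\le p!\,e^{|f|}$ and \eqref{K4},
\[
\|I\Psi\|_{L^{\eta^{-1}}_{[t_0,t_0+\tau]\times\T^4}} \le C_\eta R(\o)^\eta N^{TK\eta}.
\]
Since $E \le 2N^\al$ gives $E^{\frac{1+\eta}{1-2\eta}} \le C N^{\al(1+\frac{3\eta}{1-2\eta})}$, the full contribution is at most $C_\eta R^\eta(\tau + \sqrt{\tau}) N^{\al + \eps(\eta)}$, where $\eps(\eta) = \frac{3\eta\al}{1-2\eta}+TK\eta$ can be made smaller than the available margin $\frac{\dl}{2}$ by choosing $\eta$ sufficiently small in terms of $(s,T,K)$; after a further shrinking of $\tau=\tau(s,T,K,\o)$ to absorb $C_\eta R^\eta$, this contribution is also $\le \frac{1}{4}N^\al$.

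Combining the previous two steps, on $[t_0,T^*]$ we obtain
\[
E(t) \le E(t_0) + \tfrac{1}{2}N^\al \le N^\be + \tfrac{1}{2}N^\al < N^\al,
\]
strictly below the threshold $2N^\al$ defining $T^*$. By continuity of $E$ this forces $T^* = \min(T, t_0+\tau)$, which completes the proof. The delicate step is Step~4: the supercritical exponent $\frac{1+\eta}{1-2\eta}>1$ inevitably produces an $N$-loss that must be traded against the $\dl$-margin built into $\al$, and the parameters $\dl$, $\eta$, $\tau$ must be chosen in a compatible order ($\dl$ first, then $\eta=\eta(\dl,s,T,K)$, then $\tau=\tau(\eta,s,T,K,\o)$) to ensure all contributions are controlled uniformly in $N$.
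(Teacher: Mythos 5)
There is a genuine gap in the treatment of the last term of \eqref{Y1}, and it is precisely the step where the paper's argument departs from a naive bootstrap. You fix $\eta=\eta(s,T,K)>0$ independently of $N$ and bound the contribution by $C_\eta R^\eta(\tau+\sqrt{\tau})\,N^{\al+\eps(\eta)}$ with $\eps(\eta)=\frac{3\eta\al}{1-2\eta}+TK\eta>0$, claiming this can be absorbed by shrinking $\tau$. But there is no ``margin'' against which $\eps(\eta)$ can be traded: the target is $\tfrac14 N^\al$, and $\tau\,N^{\al+\eps(\eta)}\le \tfrac14 N^\al$ forces $\tau\les N^{-\eps(\eta)}\to 0$ as $N\to\infty$. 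Since $\tau$ must depend only on $(s,T,K,\o)$ --- the globalization iterates the proposition $T/\tau$ times with $N=N_k\to\infty$ on intervals of \emph{fixed} length $\tau$ --- the bootstrap cannot close for large $N$ with any fixed $\eta$. (There is also a small arithmetic slip: with $\al=2s-3-2\dl$ the $k=2$ exponent $\frac{3-2s}{2}+\dl+\frac{3\al}{2}$ equals $\al$ exactly, not $\al-\frac{\dl}{2}$; this particular term is still harmless since its coefficient is $N$-independent, but it illustrates that no $N^{-c\dl}$ gain is available where you assume one.)

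The paper's resolution is to let $\eta$ degenerate with $N$: take $\eta=1/n$ with $n\sim KT\log N$. Then $N^{KT\eta}\les 1$, $R^{1/n}\le R$, and on the bootstrap region $E^{c\eta}=e^{c\eta\log E}\les 1$ since $\log E\les \al\log N$; the price is the factorial constant $(n!)^{1/n}\sim n\sim KT\log N$ in the bound for $\|I_N\Psi\|_{L^n}$, which turns the last term of \eqref{Y1} into $\int_{t_0}^{t} KRT(\log N)\,E(t')\,dt'\sim \int_{t_0}^{t} KRT\,E(t')\log E(t')\,dt'$ (plus an integrable singular term handled by the auxiliary function $G$ in \eqref{K14}). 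This is a log-linear (Osgood-type) Gronwall inequality; comparison with the ODE $\dt H=\wt{C}H(\log H+K^2T^2)$ gives $E(t)\le N^{\be\exp(C(1+V+KRT)(t-t_0))}$ up to constants, and one then chooses $\tau$ --- independent of $N$ --- so that $\be\, e^{C(1+V+KRT)\tau}\le\al$, as in \eqref{K17}--\eqref{K21}. This interplay between the $N$-dependent choice of $\eta$ and the resulting $E\log E$ Gronwall structure (the source of the double-exponential bound in Remark \ref{REM:bound}) is the essential idea missing from your proposal; the rest of your parameter calibration and the handling of the first three groups of terms in \eqref{Y1} is broadly in line with the paper.
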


\begin{proof}
By replacing
$E(t)$ by $E(t) + 1$, we can assume that $E(t) \geq 1$.
Then, from \eqref{Y1} with \eqref{K2a}, we have
\begin{align} 
\begin{split}
E(t) \,  -  \,  E(t_0) 
&\les
\, \int_{t_0}^{t} N^{-2+3(2-s)} E^2(t') dt' \\
&\quad +  M_T \sum_{k=1}^2 \int_{t_0}^{t} N^{-\frac{1-k(2-s)}2 + \dl}
E^\frac {k+1} 2(t') dt' \\
&\quad + M_T  \int_{t_0}^{t} N^\ld  E^{\frac34}(t') 
  dt' \\
&\quad + 
\Bigg\{\int_{t_0}^{t} \bigg(E^{1+c\eta}(t') + \frac{\eta}{(t'-t_0)^\frac12}\bigg) dt'\Bigg\}
\|I \Psi\|_{L^{\eta^{-1}}_{[t_0, t],x}}
\end{split}
\label{K5}
\end{align}

\noi
for any $t \geq t_0$ and for $c= \frac{3}{1-2\eta}>0$.

We assume that \eqref{Y2} holds for some $0 \leq t_0 < T$. By the continuity in time of $E(t)$
and~\eqref{Y2} with $\al > \be$, there exists $t_1 > t_0$ sufficiently close to $t_0$ such that
\begin{align}
\max_{t_0 \leq \tau \leq t} E(\tau) \leq 100 N^{\al}
\label{K5a}
\end{align}

\noi
for any $t_0 \leq t \leq t_1$, where $\al > \beta$ is to be determined later. Note that at this point, $t_1$ depends on $t_0$. This issue will be dealt later.

Let $\eta = \frac{1}{n}$ for some $n  \in \NB$. 
We note from 
\eqref{K4} and $n! \le n^n$ that
\begin{align*}
\| I_N \Psi\|_{L^n_{[t_0, t], x}}^n 
& = \int_{t_0}^{t}\int_{\T^4}
| I_N \Psi(x, t)|^n 
 dx dt 
 \leq n!  \int_0^T 
\int_{\T^4}
e^{| I_N \Psi(x, t)|}
 dx dt \\
& \le
n! e^{K T \log N} R \leq n^n e^{K T \log N} R.
\end{align*}

\noi
We now choose 
\begin{align*}
n \sim K T \log N +  c \log (100 N^\al ) \sim K T \log N \gg 1,
%\label{K7}
\end{align*}
where we may have to take $K\gg1$ larger.
\noi
Then, 
due to \eqref{K5a} and $\eta = n^{-1}$,
we can estimate the last term on the right-hand side of~\eqref{K5}
as
\begin{align} 
\begin{split}
& \Bigg\{\int_{t_0}^{t} \bigg(E^{1+c\eta}(t') + \frac{\eta}{(t'-t_0)^\frac12}\bigg) dt'\Bigg\}
\|I \Psi\|_{L^{\eta^{-1}}_{[t_0, t],x}}\\
& 
\leq 
\int_{t_0}^{t} \bigg(E(t') n e^{\frac 1 n (K T\log N + c \log (100N^\al )) } R^{\frac{1}{n}} + \frac{e^{\frac 1n K T \log N} R^\frac{1}{n}}{(t'-t_0)^\frac12}\bigg) dt'\\
& 
\les 
\int_{t_0}^{t} \bigg( \big[K R T \log N \big]E(t') + \frac{R}{(t'-t_0)^\frac12}\bigg) dt', 
\end{split}
\label{K8}
\end{align}

\noi
where we used that $R = R(\o) \geq 1$.

Next, we define $F$ by 
\begin{align}
 F(t) \deff \max_{t_0 \le \tau \le t}  E(\tau) - E(t_0) + N^\be \geq E(t).
 \label{K8a}
\end{align}

\noi
Then, by \eqref{K5a}, we have
\begin{align}
N^\be \leq F(t) \leq 200 N^\al
\label{K9}
\end{align}

\noi
for $t_0 \le t \le t_1$.
In particular, we have
$
\log F(t) \sim \log N.
%\label{K10}
$
%
%
%\noi
Moreover, from \eqref{K9}, we have
 \begin{align} 
\begin{cases}
  N^{-2+3(2-s)} F^2(t) \les N^{-\al}F^2(t) \leq 200 F(t), \\
  N^{-\frac{1-2(2-s)}2 + \dl}F^\frac{3}{2}(t)
 \les N^{-\frac \al2 }F^\frac 32(t) \leq \sqrt{200} F(t),\\
  N^{-\frac{1-(2-s)}2 + \dl} F(t) \leq  F(t), \\
  N^\ld  F^{\frac34}(t) \les 
  N^\ld F^{-\frac{1}{4}}(t) F(t)
 \le F(t), 
\end{cases}
\label{K11}
\end{align}

\noi
provided that 
\begin{align}
\alpha \le \min(3s-4,2s-3), \quad 
\dl \le \min\big(\tfrac{2s - 3 - \al}{2},  \tfrac {s-1}2\big),
\quad \text{and}\quad  \ld \le \tfrac\be 4,
\label{K12}
\end{align} 
which requires that $s>\max(\frac43, \frac32)=\frac32$.
Hence, by \eqref{K8a}, \eqref{K5}, \eqref{K11}, \eqref{K8}, \eqref{K9}, and \eqref{K3}, we obtain 
\begin{align} 
\begin{split}
F(t)  & -    F(t_0) \\
& = \max_{t_0 \leq \tau \leq t} E (\tau) - E (t_0) \\
& \les
 (1 + M_T) \int_{t_0}^{t} F(t') dt' 
 + 
\int_{t_0}^{t} \bigg(K R T F(t') \log F(t')  + \frac{R}{(t'-t_0)^\frac12}\bigg) dt'\\
& \les
 (1 + V + K^3 T^3) \int_{t_0}^{t} F(t') dt' 
 + 
\int_{t_0}^{t} \bigg(K R T F(t') \log F(t')  + \frac{R}{(t'-t_0)^\frac12}\bigg) dt'\\
& \les
 (1 + V + K R T)
\int_{t_0}^{t} F(t') (\log F(t') + K^2 T^2) dt'   + 2 R (t - t_0)^{\frac 12}
\end{split}
\label{K13}
\end{align}

\noi
for any $t_0 \le t \le t_1$ such that \eqref{K9} holds.
Denoting by $C_0 = C_0 (s)$ the implicit constant in \eqref{K13}, 
we define $G$ by 
\begin{align}
 G(t) = F(t) - 2 C_0 R (t - t_0)^\frac{1}{2}.
 \label{K14}
\end{align}
Let us pick $t_*(s,R)>0$ such that 
\begin{align}
	2 C_0 R (t - t_0)^\frac{1}{2} \ll 1,
	\label{K15a}
\end{align} 
sufficiently small so that
\begin{align}
	F(t) \leq 5^{\frac{\al - \be}{2}} G(t).
	\label{K_add}
\end{align}
Then, from \eqref{K14} and \eqref{K_add}, we get that $F(t) \sim G(t)$, which combined with \eqref{K13} gives
\begin{align} 
\begin{split}
G(t)  -  G(t_0) 
\le C (1 + V + K R  T)
\int_{t_0}^{t}  G(t') (\log G(t') + K^2 T^2)  dt'
\end{split}
\label{K15}
\end{align}

\noi
for any $t_0 \le t \le \min( t_1, t_0 + t_*(s, R))$ and some $C>0$.

Now, note that the equation 
\[ \dt H(t) =\wt{C} H(t) (\log H(t) + K^2 T^2)\]

\noi
has a solution 
$ H(t) = \exp\big( \exp(\wt{C} t)  (\log H(0) + K^2T^2) - K^2T^2\big). $
%
%\noi
Then, by comparison, we deduce from \eqref{K15} that 
\begin{align}
G(t) \leq  \exp \Big( e^{C(1 + V + K R T)  (t - t_0) } (\log G(t_0) + K^2T^2) - K^2T^2\Big)
\label{K16}
\end{align}

\noi
for some constant $C > 0$.

Recall from 
\eqref{K14} and 
\eqref{K8a} that 
$G(t_0) = N^\be$.
Then, if the condition
\begin{align}
e^{C(1 + V + K R T)  (t - t_0) } (\be \log N  + K^2 T^2) \le \al \log N +  K^2 T^2 - \frac{\al - \be}{2} \log 5
\label{K17}
\end{align}

\noi
holds for $t_0 \leq t \leq \min(t_1, t_0 + t_* (s, V, R, T, K))$ (where $t_* (s, V, R, T , K) > 0$ will be specified later), the bound \eqref{K16} implies 
\begin{align}
G(t) \le 5^{\frac{\be - \al}{2}}  N^\al
\label{K18}
\end{align}

\noi
for any $t_0 \le t \le \min( t_1, t_0 + t_*(s, V, R, T))$.
Then, we conclude from 
\eqref{K8a}, 
\eqref{K14}, \eqref{K15a}, and \eqref{K_add}
that 
\begin{align}
E(t) \le F(t) \le   N^\al
\label{K19}
\end{align}

\noi
for any $t_0 \le t \le \min( t_1, t_0 + t_*(s, V, R, T, K))$.
This in turn guarantees that the conditions~\eqref{K5a}
and~\eqref{K9} are met.
Therefore, 
by a standard continuity argument, 
we conclude that 
the bounds~\eqref{K18} and 
\eqref{K19} hold
for any $t$ with $t_0 \le t \le t_0 + t_*(s, V, R, T, K)$ such that 
the condition~\eqref{K17} holds.

Finally, let us consider the condition \eqref{K17}.
Let $\al = \al(s) > \be = \be(s) $ be such that the conditions in \eqref{K12} hold. Since $\al>\be$, there exists $t_{**}(s, V, R, T, K)$ such that, for $0\le  \tau\le t_{**}$, we have
\begin{align}
 \al  - e^{C(1 + V + R + KRT) \tau }\be \geq \frac{\al - \be}{2} > 0.
 \label{K20}
\end{align}

\noi
Then, since $N > 10$,
by choosing $0\le \tau \le t_{**}$ sufficiently small such that
\begin{align}
  e^{C(1 + V + R + KRT)  \tau } - 1
\leq \frac{\frac{\al - \be}{2} \log N - \frac{\al - \be}{2}\log 5}{K^2T^2}, 
 \label{K21}
\end{align}

\noi
we can guarantee that the condition \eqref{K17} is satisfied for $t_0 \le t \le t_0+\tau$, and hence so is \eqref{K19}.
This concludes the proof of Proposition \ref{PROP:LGWP}.
\end{proof}

We now present the proof of Theorem \ref{THM:GWP1}.
Fix  $\frac 74 < s <2$, $T\gg1 $, $\o \in \O$ such that $V = V(\o) < \infty$
and $R = R(\o)< \infty$, and let the parameters
$\al, \be,   \tau$ be as in Proposition~\ref{PROP:LGWP}.

Fix $N_0 \gg 1$ which is to be determined later.
Then, for $k \in \Z_{\geq 0}$, 
define an increasing sequence $\{N_k\}_{k \in \Z_{\ge 0}}$ by setting
\begin{align}
N_{k } =  N_0^{\s^{k}}
\label{Z2}
\end{align}

\noi
for some $\s = \s(s) > 1$ sufficiently large such that 
\begin{align}
N_{k+1}^{2(2-s)} N_k^\al + N_k^{2\al} \ll N_{k+1}^\be, 
\label{Z3}
\end{align}

\noi
which, due to to the assumptions on $\al>\be$, requires
\begin{align*}
2(2-s) < \be < \al \le 2s-3 \quad \text{and} \quad s>\tfrac74.
%\label{Z3a}
\end{align*}

\noi
Suppose that for some $k \in \Z_{\geq 0}$ and $t \geq 0 $, it holds that
\begin{align}
E(I_{N_k} \vec v)(t) \leq N^\al_k.
\label{Z4}
\end{align}

\noi
Then,  by \eqref{Hamil},
\eqref{I1}, Sobolev inequality, \eqref{Z4}, and \eqref{Z3}, 
we have 
\begin{align}
\begin{split}
E(I_{N_{k+1}}\vec v)(t) 
%& \les \|I_{N_{k+1}}\vec v\|_{\H^2}^2 +  \|I_{N_{k+1}}v\|_{L^4}^4 \\
& \les N_{k+1}^{2(2-s)} \|\vec v\|_{\H^{s}}^2 + \|v\|_{H^{1}}^4 \\
& \les N_{k+1}^{2(2-s)} \|I_{N_k} \vec v\|_{\H^2}^2 + \|I_{N_k} v\|_{H^2}^4\\
&  \les  N_{k+1}^{2(2-s)}   E(I_{N_k}\vec v) + E(I_{N_k}\vec v)^2 \\
& \les  N_{k+1}^{2(2-s)} N_k^\al+ N_k^{2\al}
\ll N_{k+1}^\be.
\end{split}
\label{Z5}
\end{align}

We are now ready to implement  an iterative argument. 
Given $(u_0, u_1) \in \H^s(\T^4)$, choose $N_0 = N_0(u_0, u_1, s) \gg 1$
such that 
\begin{align} \label{eqn: sizeNk0}
E(I_{N_{0}}\vec v) (0) \leq N_{0}^\be.
\end{align}

\noi
By applying  Proposition \ref{PROP:LGWP}, there exists $\tau = \tau(s, T, K, \o)>0$ such that
\begin{align*}
E(I_{N_{0}}\vec v) (t) \leq N_{0}^\al
\end{align*}

\noi
for any $0 \le t \le  \tau$.
By \eqref{Z4} and \eqref{Z5}, this then implies 
\begin{align*}
E(I_{N_{1}}\vec v) (\tau)\leq N_{1}^\be.
\end{align*}

\noi
Applying   Proposition \ref{PROP:LGWP} once again, 
we in turn  obtain
\begin{align*}
E(I_{N_{1}}\vec v) (t) \leq N_{1}^\al
\end{align*}

\noi
for $0 \le t \le  2 \tau$.
By \eqref{Z4} and \eqref{Z5}, this then implies 
\begin{align*}
E(I_{N_{2}}\vec v) (2\tau) \leq N_{2}^\be.
\end{align*}

\noi
By iterating this argument $\big[\frac{T}{\tau}\big] + 1$ times, we 
obtain a solution $v$ to 
 the renormalized cubic SNLB \eqref{SNLB6} on
 the time interval $[0, T]$.
Since the choice of $T \gg1 $ was arbitrary, this proves global well-posedness of \eqref{SNLB6}.

\begin{remark} \label{REM:bound}\rm

From the argument above, we can also establish a growth bound on the Sobolev norm of the solution $v$ to SNLB \eqref{SNLB6}. Namely, for $T\gg1 $ and with the same choice of parameters, we have 
\begin{align*}
\| \vec v(t)\|_{\H^s} \les
\big( 1 + E(I_{N_{k}}\vec v) (t)\big)^\frac{1}{2} \leq N_{k}^\frac{\al}{2}
\end{align*}

\noi
for any $ 0\le t \le T $ such that 
 $k \tau \leq t \leq (k+1) \tau$ for some 
$k \in \Z_{\ge 0}$.
Then, by \eqref{Z2}, we have
\begin{align}
\| \vec v(t)\|_{\H^s} \les
\exp\big( \tfrac{\al}{2} \s^k \log  N_{0}\big)
\le
\exp\big( \tfrac{\al}{2}  \log  N_{0}
\cdot \exp \big(\tfrac{ (\log \s) t }{\tau}\big)\big)
\label{Z9}
\end{align}

\noi
for $0 \leq t \leq T$.
Moreover, 
in view of \eqref{eqn: sizeNk0}, we
choose $N_0 \in \NB$ such that 
$1 + E(I_{N_{0}}\vec v) (0) \sim N_{0}^\be$, so that by \eqref{I1} and the fact that $\be > 2 (2 - s)$, we have
\begin{align}
\log N_0 \sim \log 
\big(2 + \| \vec v(0)\|_{\H^s}\big).
\label{Z9a}
\end{align}

\noi
In order to iteratively apply Proposition \ref{PROP:LGWP} $ \frac{T}{\tau}$-many times to reach the target time~$T$, we need 
to guarantee the condition \eqref{K21}. By taking
\begin{align}
\tau \sim_{s, V, R, K} T^{-1},
\label{Z10}
\end{align}

\noi
 the condition \eqref{K20} holds. Thus, in view of \eqref{Z2} with $k  \sim \frac T\tau$, 
the condition \eqref{K21} becomes
\begin{align*}
  0 < C_0
\leq \frac{\frac{\al - \be}{2} \s^{T^2} \log N_0 - \frac{\al - \be}{2} \log 5}{K^2 T^2},
\end{align*}

\noi
which holds true
for any sufficiently large $T\gg1$.
Finally,  from 
\eqref{Z9}, \eqref{Z9a}, and \eqref{Z10}, 
we conclude the following double exponential bound for any $t\ge0$
\begin{align*}
\| \vec v(t)\|_{\H^s} 
\le
C \exp\Big( c \log 
\big(2 + \| \vec v(0)\|_{\H^s}\big)
\cdot e^{C(\o)  t^2}\Big).
%\label{Bd1}
\end{align*}

\end{remark}

\section{Almost sure global well-posedness of the hyperbolic $\Phi_4^{k + 1}$-model}
\label{SEC:GWP2}

In this section, we prove Theorem \ref{THM:2-old}, i.e.,  almost sure global well-posedness of the renormalized SdNLB \eqref{SNLB11} and invariance of the corresponding Gibbs measure \eqref{Gibbs3}. 
Due to the convergence of $\rhoo_N$ to $\rhoo$, 
the invariance of $\rhoo_N$ under 
the truncated SdNLB  dynamics~\eqref{SNLB10}, 
and Bourgain's invariant measure argument \cite{BO94,BO96},
Theorem \ref{THM:2-old}
follows once we can construct the limiting process $(u,\dt u)$ locally-in-time
with a good approximation property for
the solution $u_N$ to \eqref{SNLB10}.
Furthermore, since 
$\rhoo$ is mutually absolutely continuous with respect to $\muu_2$, 
it suffices to study  the renormalized SdNLB~\eqref{SNLB10} and \eqref{SNLB11} with the Gaussian 
random initial data $(u_0^\o, u_1^\o)$ 
with $\L(u_0^\o, u_1^\o) = \muu_2$.

We first detail how to adapt the proof of Theorem~\ref{THM:1} to show local well-posedness of \eqref{SNLB10} and \eqref{SNLB11}, uniformly in the truncation $N$, and then show invariance of the truncated Gibbs measure $\vec{\rho}_N$ in \eqref{GibbsN} under the dynamics of the truncated SdNLB \eqref{SNLB10}.

\smallskip

As in Section~\ref{SEC:LWP}, to construct solutions for SdNLB \eqref{SNLB10}-\eqref{SNLB11}, we proceed with a first order expansion centered around the stochastic convolution $\Psid$ which solves \eqref{SdLB}.
By defining the operator $\D(t)$ as 
\begin{equation*}
	\D(t) \deff e^{-\frac{t}2}\frac{\sin\big(t \jbb{\nb}^2\big)}{\jbb{\nb}^2} \qquad \text{with} \qquad \jbb{\nb} \deff \Big( \jb{\nb}^4 - \frac 14 \Big)^{1/4},
	%\label{lin2}
\end{equation*} 

\noi
the stochastic convolution $\Psid$ which solves the stochastic damped linear beam equation in \eqref{SdLB} can be expressed as 
\begin{align} 
	\Psid (t) 
	=\dt\D(t)u_0^\o + \D(t)(u_0^\o+u_1^\o)+ \sqrt{2}\int_0^t\D(t - t')dW(t'), 
	\label{PhiN}
\end{align}

\noi
where $W$ is a cylindrical Wiener process on $L^2 (\T^4)$ as in \eqref{Wiener1}. A direct but tedious computation shows that $\Psid_N=\pi_N\Psid$
is a mean-zero real-valued Gaussian random variable with variance
\begin{align*}
	\E \big[\Psid_N(t, x)^2\big] = \E\big[\big(\pi_N\Psid(t, x)\big)^2\big]
	= \al_N
	% \label{Herm2a}
\end{align*}

\noi
for any $t\in \R_+$, $x\in\T^4$,  and $N \in\NB$,
where $\al_N$ is as in \eqref{sN}.
Unlike $\s_N(t)$ in \eqref{sig1}, 
the variance $\al_N$ is independent of time $t$.
This is due to the fact that the massive Gaussian free field $\mu_2$
is invariant under the dynamics
of \eqref{SdLB}.

Let  $u_N$ be the solution to \eqref{SNLB10}
with $\L\big((u_N, \dt u_N)|_{t=0}\big) = \muu_2$.
Then, we write $u_N$ 
as 
\begin{align}
	u_N = v_N + \Psid
	= (v_N +  \Psid_N) + \pi_N^\perp \Psid,
	\label{decomp2}
\end{align}

\noi
where $\pi_N^\perp = \Id - \pi_N$. Note that the dynamics of 
the truncated Wick-ordered SdNLB~\eqref{SNLB10}
decouple into 
the linear dynamics for the high frequency part given by $\pi_N^\perp \Psid$
and 
the nonlinear dynamics for the low frequency part $\pi_N u_N$:
\begin{align}
	\dt^2 \pi_N u_N   + \dt \pi_N u_N  +(1-\Dl)^2  \pi_N u_N 
	+
	\pi_N\big(   \Wa{ (\pi_N u)^{k} } \big) 
	= \sqrt{2} \pi_N \xi  .
	\label{SNLB11a}
\end{align} 

	\noi
Then,   
by \eqref{Herm3-1}, the remainder term $v_N = \pi_N u_N - \Psid_N $ satisfies the following equation:
\begin{align}
	\begin{cases}
		\dt^2 v_N + \dt v_N +(1-\Dl)^2 v_N  +
		\sum\limits_{\ell=0}^k {k\choose \ell} \pi_N \big(     \Wa{(\Psid_N)^\l   }  v_N^{k-\ell}\big)
		=0,\\
		(v_N,\dt v_N)|_{t = 0}=(0,0),
	\end{cases}
	\label{SNLB12}
\end{align}

\noi
where the Wick power
$
	\Wa {(\Psid_N)^\l } 
	\, \deff  H_{\l} (\Psid_N; \al_N)
	%		\label{Herm3}
$
\noi
converges 
to a limit, denoted by 
$\Wa{ (\Psid)^\l  } \,$, 
in $C([0,T];W^{-\eps,\infty}(\T^4))$
for any $\eps > 0$ and $T > 0$, 
almost surely (and also in $L^p(\O)$
for any $p < \infty$); see Lemma \ref{LEM:sto_cov}.
Thus, we formally obtain the limiting equation:
\begin{align}
	\begin{cases}
		\dt^2 v + \dt v +(1-\Dl)^2v  +
		\sum\limits_{\ell=0}^k {k\choose \ell} \Wa{ (\Psid)^\l }  v^{k-\ell}
		=0,\\
		(v,\dt v)|_{t = 0}=(0,0).
	\end{cases}
	\label{SNLB13}
\end{align}

We now detail how to modify the proof of Theorem~\ref{THM:1} to show local well-posedness of \eqref{SNLB12}-\eqref{SNLB13}, uniformly in $N\in\NB$. Note that $v$ is a solution to \eqref{SNLB13} if and only if $w = e^{\frac{t}{2}} v$ satisfies the following equation:
\begin{align*}
	\dt^2 w + (1 - \Dl)^2 w - \frac 14 w + e^{\frac{t}{2}} \sum_{\l = 0}^k \binom{k}{\l} \Wa{(\Psid)^\l} (e^{-\frac{t}{2}} w)^{k - \l} = 0.
\end{align*}
The terms in the mild formulation corresponding to the $w$-equation can be treated as in Proposition \ref{PROP:LWPv}, except for the one coming from $\frac34 w - 2 \Dl w$ term. However, this term can be viewed as a perturbation thanks to the two degrees of smoothing in the integral Duhamel operator, and the analogue of Proposition~\ref{PROP:LWPv} follows. The same argument allows us to show local well-posedness of \eqref{SNLB12} where the time of existence depends only on the stochastic convolution $\Psid$ and its Wick-powers, but not on $N\in\NB$.

\smallskip

Now, it remains to show the invariance of 
the truncated Gibbs measure $\rhoo_N$ under 
the truncated SdNLB dynamics~\eqref{SNLB10} in the following proposition. In fact, the rest of the proof of Theorem \ref{THM:2-old}
follows from a standard application of Bourgain's invariant measure argument,
whose details we omit.
See, for example,~\cite{ORTz} for further details.

\begin{proposition}\label{LEM:global}
Let   $N \in \NB$.
Then, 
the truncated SdNLB equation \eqref{SNLB10}
is almost surely globally well-posed with respect to the random initial data distributed by the truncated Gibbs measure $\vec{\rho}_N$ in \eqref{GibbsN}.
Moreover, the truncated Gibbs measure $\rhoo_{ N}$  \eqref{GibbsN}
is invariant under the dynamics of 
\eqref{SNLB10}. More precisely, denoting by $u_N$ the global solution to truncated SdNLB equation \eqref{SNLB10}, we have $\L (u_N (t), \dt u_N (t)) = \vec{\rho}_N$ for any $t \in \R_+$.
\end{proposition}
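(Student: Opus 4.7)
The plan is to exploit that after applying the decomposition $u_N = \pi_N u_N + \pi_N^\perp u_N$, the dynamics \eqref{SNLB10} decouples. Indeed, the renormalized nonlinearity $\pi_N \Wa{(\pi_N u)^k}$ is supported at low frequencies and depends only on $\pi_N u$, so the high-frequency component $\pi_N^\perp u_N$ satisfies the linear stochastic damped beam equation driven by $\pi_N^\perp \xi$, while the low-frequency component $\pi_N u_N$ satisfies the finite-dimensional SDE \eqref{SNLB11a}. Correspondingly, the truncated Gibbs measure $\vec\rho_N$ in \eqref{GibbsN} factors as the product of its high-frequency marginal $\pi_N^\perp \muu_2$ and its low-frequency marginal $\vec\rho_N^{\mathrm{low}} = Z_N^{-1} e^{-R_N(\pi_N u)}\, d(\pi_N\muu_2)$, so the two invariance statements can be handled independently.

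For the high-frequency part, the explicit Duhamel formula analogous to \eqref{PhiN} yields a globally defined Gaussian solution, and $\pi_N^\perp \muu_2$ is invariant because, mode-by-mode, it is the unique stationary measure of the two-dimensional Ornstein-Uhlenbeck system generated by the linear stochastic damped beam equation. For the low-frequency part, I would rewrite \eqref{SNLB11a} as a first-order It\^o SDE on the finite-dimensional phase space $E_N \deff \pi_N L^2(\T^4) \times \pi_N L^2(\T^4)$ with smooth polynomial drift. Local well-posedness is immediate from standard finite-dimensional SDE theory, and non-explosion follows from It\^o's formula applied to the truncated Hamiltonian
\begin{equation*}
H_N(V,W) \deff \tfrac12\|(1-\Dl)V\|_{L^2}^2 + \tfrac12\|W\|_{L^2}^2 + \tfrac{1}{k+1}\int_{\T^4} \Wa{V^{k+1}}\,dx,
\end{equation*}
which is bounded below because $k+1$ is even. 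A short computation yields $dH_N = -\|W\|_{L^2}^2\, dt + c_N\, dt + \sqrt 2 \langle W, d\pi_N W_t\rangle$ with $c_N = \dim \pi_N L^2(\T^4)$, so $\mathbb E[H_N(t)]$ grows at most linearly in time and no trajectory escapes in finite time, yielding global well-posedness.

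The invariance of $\vec\rho_N^{\mathrm{low}}$ follows from the standard Hamiltonian plus Langevin structure. Writing the generator of the low-frequency SDE as $\mathcal L_N = \mathcal L_N^{\mathrm{Ham}} + \mathcal L_N^{\mathrm{OU}}$, where
\begin{equation*}
\mathcal L_N^{\mathrm{Ham}} \deff \langle W, \nabla_V\rangle - \langle (1-\Dl)^2 V + \pi_N\Wa{V^k}, \nabla_W\rangle, \qquad \mathcal L_N^{\mathrm{OU}} \deff -\langle W, \nabla_W\rangle + \Delta_W,
\end{equation*}
I would use the Hermite identity $\partial_V H_{k+1}(V;\alpha_N) = (k+1) H_k(V;\alpha_N)$ together with the self-adjointness of $\pi_N$ to identify $\pi_N \Wa{V^k}$ with the $L^2$-gradient (restricted to low frequencies) of $R_N$. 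Then $\mathcal L_N^{\mathrm{Ham}}$ is precisely the Liouville operator of $H_N$: it is divergence-free and annihilates $H_N$, so $(\mathcal L_N^{\mathrm{Ham}})^* e^{-H_N} = 0$. On the other hand, $\mathcal L_N^{\mathrm{OU}}$ is the generator of an Ornstein-Uhlenbeck process in $W$ alone with stationary density $e^{-\frac12\|W\|_{L^2}^2}$, so $(\mathcal L_N^{\mathrm{OU}})^* e^{-H_N} = 0$ as well. Combining, $\mathcal L_N^* \vec\rho_N^{\mathrm{low}} = 0$, which together with the high-frequency invariance yields invariance of $\vec\rho_N$ under the full dynamics \eqref{SNLB10}. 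The main point requiring care is the algebraic verification that $\pi_N\Wa{V^k}$ is exactly the $V$-gradient of $R_N$ on the low-frequency sector, which ensures the clean Hamiltonian/Langevin splitting; everything else is standard constructive QFT / invariant measure machinery (see, e.g., \cite{ORTz, OT1, GKOT}).
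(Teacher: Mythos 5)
Your proposal is correct and follows essentially the same route as the paper: decompose into high and low frequencies, factor $\vec\rho_N$ as the product of the Gaussian high-frequency marginal (preserved by the linear Ornstein--Uhlenbeck dynamics) and the weighted low-frequency marginal, and split the low-frequency generator into a Hamiltonian/Liouville part and an Ornstein--Uhlenbeck part, each of which annihilates the density. Your explicit non-explosion argument via It\^o's formula applied to the truncated Hamiltonian, and the verification that $\pi_N \Wa{V^k}$ is the $V$-gradient of $R_N$ via $\partial_x H_{k+1}(x;\sigma)=(k+1)H_k(x;\sigma)$, are correct details that the paper leaves implicit in its sketch.
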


\begin{proof}
The idea of the proof has already appeared in \cite{GKOT, LO22, ORTz, OOT} and so we only sketch the key steps.
Given $N \in \NB$, we define $\vec{\mu}_{2, N}$ and $\vec{\mu}^\perp_{2,N}$ to be the marginal probability measures on $\pi_N \H^{-\eps} (\T^4)$ and $\pi_N^\perp \H^{-\eps} (\T^4)$, respectively. In other words, recalling $X^1$ and $X^2$ in \eqref{series}, $\vec{\mu}_{2, N}$ and $\vec{\mu}^\perp_{2,N}$ are the induced probability measures under the maps $\o\in\O\mapsto(\pi_N X^1 (\o), \pi_N X^2 (\o))$ and $\o\in\O\mapsto (\pi_N^\perp X^1 (\o), \pi_N^\perp X^2 (\o))$, respectively.
%
%\begin{align*}
%\o \in \O \longmapsto (\pi_N X^1 (\o), \pi_N X^2 (\o)),
%\end{align*}
%
%\noi
%and $\vec{\mu}_{2, N}^\perp$ is the induced probability measure under the map
%\begin{align*}
%\o \in \O \longmapsto (\pi_N^\perp X^1 (\o), \pi_N^\perp X^2 (\o)).
%\end{align*}
%
%\noi
Then, with $\vec{\mu}_2 = \vec{\mu}_{2, N} \otimes \vec{\mu}_{2, N}^\perp$ and \eqref{GibbsN}, we can write
\begin{align}
\vec{\rho}_N = \vec{\nu}_N \otimes \vec{\mu}_{2, N}^\perp,
\label{rho_decomp}
\end{align}

\noi
where the measure $\vec{\nu}_N$ is given by
\begin{align*}
d \vec{\nu}_N = Z_N^{-1} R_N (u) d \vec{\mu}_{2, N}
\end{align*}

\noi
with the density $R_N$ as in \eqref{R1}.

We recall the decomposition \eqref{decomp2}. Since the high frequency part $\pi_N^\perp u_N = \pi_N^\perp \Psid$ satisfies
\begin{align}
\dt^2 \pi_N^\perp \Psid + \dt \pi_N^\perp \Psid + (1 - \Dl)^2 \pi_N^\perp \Psid = \sqrt{2} \pi_N^\perp \xi,
\label{high}
\end{align}

\noi
the dynamics of $\pi_N^\perp\Psid$ are linear and thus we can study the evolution of each frequency on the Fourier side to conclude that the Gaussian measure $\vec{\mu}_{2, N}^\perp$ is invariant under the dynamics of \eqref{high}. In fact, a tedious but direct computation shows that 
$$\E \big[ |\ft \Psid (t, n) |^2\big] = \frac{1}{\jb{n}^4} \quad \text{and} \quad \E \big[ |\ft{\dt \Psid} (t, n)|^2 \big] = 1$$ 
for any $t \in \R_+$ and $n \in \Z^4$, so that $\L (\Psid (t), \dt \Psid (t)) = \vec{\mu}_{2}$ for any $t \in \R_+$.

We now consider the low frequency part $\pi_N u_N$, which solves \eqref{SNLB11a}. Denoting $(u_{1, N}, u_{2, N}) = (\pi_N u_N, \dt \pi_N u_N)$, we can write \eqref{SNLB11a} in the following Ito formulation:
\begin{multline}
d \begin{pmatrix} u_{1, N} \\ u_{2, N} \end{pmatrix} 
+ \Bigg\{ \begin{pmatrix} 0 & -1 \\ (1 - \Dl)^2 & 0 \end{pmatrix}
\begin{pmatrix} u_{1, N} \\ u_{2, N} \end{pmatrix}
+ \begin{pmatrix} 0 \\ \pi_N  \Wa{u_{1, N}^k} \end{pmatrix} \Bigg\} dt \\
= \begin{pmatrix} 0 \\ -u_{2, N} dt + \sqrt{2} \pi_N dW \end{pmatrix}.
\label{SNLB16}
\end{multline}

\noi
This shows that the generator $\L^N$ of the Markov semigroup for  \eqref{SNLB16}
 can be written as $\L^N = \L^N_1 + \L^N_2$, 
 where $\L_1^N$ denotes corresponds to the (deterministic) NLB with truncated nonlinearity
 \begin{align}
 	\begin{split}
 		d  \begin{pmatrix}
 			u_{1,N} \\ u_{2,N}
 		\end{pmatrix}
 		+  \Bigg\{
 		\begin{pmatrix}
 			0  & -1\\
 			(1-\Dl)^2 &  0
 		\end{pmatrix}
 		\begin{pmatrix}
 			u_{1,N} \\ u_{2,N}
 		\end{pmatrix}
 		+  
 		\begin{pmatrix}
 			0 \\ \pi_N\Wa{ u_{1,N}^k  } 
 		\end{pmatrix}
 		\Bigg\} dt = 0, 
 	\end{split}
 	\label{SNLB17}
 \end{align}
 while $\L_2^N$ corresponds to the Ornstein-Uhlenbeck process: 
\begin{align}
d   u_{2,N}
   = 
 -  u_{2,N} dt + \sqrt 2\pi_N   dW.
\label{SNLB18}
\end{align}

\noi
The invariance of 
 $ \vec{\nu}_N $ under the dynamics of \eqref{SNLB17}
 follows from Liouville's theorem and the conservation of the Hamiltonian $E_N(u_{1,N}, u_{2,N})$ under the dynamics of 
  \eqref{SNLB17}, where
\begin{align*}
E_N (u_{1,N}, u_{2,N} ) = \frac{1}{2}\int_{\T^4}  |  (1-\Dl) u_{1,N} |^2  dx +  
\frac{1}{2}\int_{\T^4} (u_{2,N})^2dx
+   \frac{1}{k+1} \int \Wa{u_{1,N}^{k+1}}\,dx.
\end{align*}
 
\noi
Hence, we have 
 $(\L^N_1)^*    \vec{\nu}_N  = 0$, where $(\L^N_1)^*$ denotes the adjoint of $\L^N_1$. Regarding \eqref{SNLB18}, 
 we recall that the Ornstein-Uhlenbeck process preserves the standard Gaussian measure.
 Thus, $ \vec{\nu}_N $
 is also invariant under the dynamics of~\eqref{SNLB18}, 
 since the measure $\vec{\nu}_N$ on the second component
 is  the white noise $\mu_0$ 
 (see \eqref{gauss0} with $s=0$ and projected onto the low frequencies ${|n|\leq N}$). 
Hence, we have $(\L^N_2)^*    \vec{\nu}_N  = 0$, and so
\[(\L^N)^*    \vec{\nu}_N = 
(\L^N_1)^*    \vec{\nu}_N +  (\L^N_2)^*    \vec{\nu}_N  = 0.\]

\noi
This shows invariance of $\vec{\nu}_N$ under \eqref{SNLB16}
and hence under \eqref{SNLB11a}.

Therefore, 
invariance of 
 the truncated Gibbs measure 
$\vec{\rho}_N$ in \eqref{GibbsN}
under the  truncated SdNLB dynamics \eqref{SNLB10}
follows from 
\eqref{rho_decomp}
and the invariance of   $\vec{\nu}_N$ and $\vec{\mu}_{2, N}^\perp$
under
\eqref{SNLB16} and 
 \eqref{high}  respectively.
\end{proof}

\begin{ack}\rm
The authors would like to thank Tadahiro Oh for proposing the problem and for his support during the project. The authors also thank Chenmin Sun for helpful discussions. A.C., G.L., and R.L.~were supported by the European Research Council (grant no.~864138 ``SingStochDispDyn'').
G.L.~was also supported by the EPSRC
New Investigator Award (grant no.~EP/S033157/1).
%They are also grateful to the anonymous referees for their  comments.
\end{ack}

%

% ---------------------------------------BIBLIOGRAPHY-----------------------------------
%\newpage


\begin{thebibliography}{99}	
	
	
	
\bibitem{AZ82}
M.~Aizenman,
{\it Geometric analysis of $\varphi^4$ fields and Ising models. I, II},
Comm. Math. Phys. \textbf{86} (1982), no. 1, 1--48.
	
		

	
\bibitem{AD21}
M.~Aizenman, H.~Duminil-Copin,
{\it Marginal triviality of the scaling limits of critical 4D Ising and $\phi^4_4$ models}, 
Ann. of Math. (2) \textbf{194} (2021), no. 1, 163--235.
	
	
\bibitem{AG83}
M.~Aizenman, R.~Graham,
{\it On the renormalized coupling constant and the susceptibility in $\phi_4^4$ field theory and the Ising model in four dimensions},
Nuclear Phys. B \textbf{225}, FS 9 (1983), no. 2, 261--288.



	
	
%	\bibitem{BG}
%	N.~Barashkov, M.~Gubinelli, 
%	{\it A variational method for $\Phi_4^3$},
%	arXiv:1805.10814 [math.PR].	
\bibitem{BO93}
J.~Bourgain,
\textit{Fourier transform restriction phenomena for certain lattice subsets and applications to nonlinear evolution equations. I. Schr\"odinger equations}, Geom. Funct. Anal. \textbf{3} (1993), no. 2, 107--156.
	
%	\bibitem{BO93} J.~Bourgain,
%	{\it Fourier transform restriction phenomena for certain lattice subsets and applications to nonlinear evolution equations. II. The KdV equation},
%	Geom. Funct. Anal. 3 (1993), no. 3, 209--262.
%	
%	
%	
\bibitem{BO94}
J.~Bourgain, 
{\it Periodic nonlinear Schr\"odinger equation and invariant measures}, 
Comm. Math. Phys. \textbf{166} (1994), no. 1, 1--26.
	

\bibitem{BO96}
J.~Bourgain, 
{\it Invariant measures for the 2D-defocusing nonlinear Schr\"odinger equation}, 
Comm. Math. Phys. \textbf{176} (1996), no. 2, 421--445. 


%	
%	\bibitem{BO97}
%	J.~Bourgain,
%	{\it Periodic {K}orteweg-de {V}ries equation with measures as initial data}, Selecta Math. (N.S.) 3 (1997), no. 2, 115--159.
	
%	\bibitem{BO00}
%	J.~Bourgain, 
%	{\it Invariant measures for NLS in infinite volume}, 
%	Comm. Math. Phys. 210 (2000), no. 3, 605--620.
	
	
\bibitem{BD15}
J.~Bourgain, C.~Demeter, 
{\it The proof of the $\l^2$~decoupling conjecture}, 
Ann. of Math. (2) \textbf{182} (2015), no. 1, 351--389.
	
	%	\bibitem{BGT}
	%	N.~Burq, P.~G\'erard, N.~Tzvetkov, 
	%	\textit{An instability property of the nonlinear Schr\"odinger equation on $S^d$},
	%	Math. Res. Lett. 9 (2002), no. 2-3, 323–335.
	
%	\bibitem{BoBul14}
%	J.~Bourgain, A.~Bulut,
%	{\it Almost sure global well posedness for the radial nonlinear Schr\"odinger
%	equation on the unit ball II: the 3D case},
%	J. Eur. Math. Soc. (JEMS) 16 (2014), no. 6, 1289--1325.
	
%	\bibitem{BurqThoTzv13}
%	N.~Burq, L.~Thomann, N.~Tzvetkov,
%	{\it Long time dynamics for the one dimensional non linear
%	Schro\"dinger equation},
%Ann. Inst. Fourier (Grenoble). 63 (2013), no. 6, p. 2137--2198.
%	
%	\bibitem{BurqTzv07}
%	N.~Burq, N.~Tzvetkov,
%	{\it Invariant measure for a three dimensional nonlinear beam equation},
%	Int. Math. Res. Not. IMRN 2007, no. 22, Art. ID rnm108, 26 pp.








%\bibitem{Bring2}
%B.~Bringmann, 
%{\it Invariant Gibbs measures for the three-dimensional wave equation with a Hartree nonlinearity II: dynamics}, 
%to appear in J. Eur. Math. Soc.
%arXiv:2009.04616 [math.AP].


\bibitem{Br64}
F.P. Bretherton, \textit{Resonant interactions between waves. The case of discrete oscillations},
J. Fluid Mech. \textbf{20} (1964), 457--479.

\bibitem{BMS05}
Z.~Brze\'zniak, B.~Maslowski, J.~Seidler,
{\it Stochastic nonlinear beam equations},
Probab. Theory Related Fields \textbf{132} (2005), no. 1, 119--149.


\bibitem{BOS15}
Z.~Brze\'zniak, M.~Ondrej\'at, J.~Seidler,
{\it Invariant measures for stochastic nonlinear beam and wave equations},
J. Differential Equations \textbf{260} (2016), no. 5, 4157--4179.


\bibitem{BT1}
N.~Burq, N.~Tzvetkov,
{\it Random data Cauchy theory for supercritical wave equations. I. Local theory},
Invent. Math. \textbf{173} (2008), no. 3, 449--475.




\bibitem{BT2}
N.~Burq, N.~Tzvetkov, 
{\it Probabilistic well-posedness for the cubic wave equation}, 
J. Eur. Math. Soc. (JEMS) \textbf{16} (2014), no. 1, 1--30. 




%\bibitem{CLO}
%K.~Cheung, G.~Li, T.~Oh,
%{\it Almost conservation laws for stochastic nonlinear Schr\"odinger equations},
%J. Evol. Equ. \textbf{21} (2021), no. 2, 1865--1894.

 
 \bibitem{Chow06}
 P-L.~Chow, 
 \textit{Asymptotic solutions of a nonlinear stochastic beam equation}, Discrete Contin. Dyn. Syst. Ser. B \textbf{6} (2006), no.4, 735--749.
 
 
\bibitem{ChristKiselev}
M.~Christ, A.~Kiselev, 
{\it Maximal functions associated to filtrations},
J. Funct. Anal. \textbf{179} (2001), no. 2, 409--425.
 
 
\bibitem{CKSTT1}
J.~Colliander, M.~Keel, G.~Staffilani, H.~Takaoka, T.~Tao, 
{\it  Almost conservation laws and global rough solutions to a nonlinear Schr\"odinger equation}, Math. Res. Lett. \textbf{9} (2002), no. 5-6, 659--682.







\bibitem{CKSTT2}
J.~Colliander, M.~Keel, G.~Staffilani, H.~Takaoka, T.~Tao, 
{\it Sharp global well-posedness for KdV and modified KdV on $ \R $ and $ \T $},
J. Amer. Math. Soc. \textbf{16} (2003), no. 3, 705--749.


	
	
	
\bibitem{DPD}
G.~Da~Prato, A.~Debussche, 
{\it Strong solutions to the stochastic quantization equations}, 
Ann. Probab. \textbf{31} (2003), no. 4, 1900--1916.



%\bibitem{DD2}
%A.~de~Bouard, A.~Debussche,
%\textit{The stochastic nonlinear Schr\"odinger equation in $H^1$},
%Stochastic Anal. Appl. 21 (2003), no. 1, 97--126.




	
\bibitem{FMRS87}
J.~Feldman, J.~Magnen, V.~Rivasseau, R.~S\'en\'eor,
{\it Construction and Borel summability of infrared  $\Phi_4^4$ by a phase space expansion}, 
Comm. Math. Phys. \textbf{109} (1987), no. 3, 437--480.
	
	
	\bibitem{Forlano20}
	J. Forlano, \textit{Almost sure global well posedness for the BBM equation with infinite $L^2$ initial data,} Discrete
	Contin. Dyn. Syst. \textbf{40} (2020), no. 1, 267--318.
	
%	\bibitem{DD}
%	A.~de Bouard, A.~Debussche,
%	\textit{Soliton dynamics for the Korteweg-de Vries equation with multiplicative homogeneous noise}, Electron. J. Probab. 14 (2009), no. 58, 1727--1744.
	
%	\bibitem{Deng15}
%	Y.~Deng,
%	{\it Invariance of the Gibbs measure for the Benjamin-Ono equation},
%	J. Eur. Math. Soc. (JEMS) 17 (2015), no. 5, 1107--1198.
	
	
	
	%	\bibitem{Gu}
	%	M.~Gubinelli,
	%	{\it Rough solutions for the periodic Korteweg-de Vries equation}, Comm. Pure Appl. Anal. 11, no. 2 (2012): 709--733.
	
%	\bibitem{GIP}
%	M.~Gubinelli, P.~Imkeller, N.~Perkowski,
%	{\it Paracontrolled distributions and singular PDEs},
%	Forum Math. Pi 3 (2015), e6, 75pp.

\bibitem{GK85}	
K.~Gaw\c{e}dzki, A.~Kupiainen,
{\it Massless lattice $\varphi_4^4$ theory: rigorous control of a renormalizable asymptotically free model}, 
Comm. Math. Phys. \textbf{99} (1985), no. 2, 197--252.


\bibitem{GH}
M.~Gubinelli, M.~Hofmanov\'a,
{\it Global solutions to elliptic and parabolic $\Phi^4$ models in Euclidean space},
Comm. Math. Phys. \textbf{368} (2019), no. 3, 1201--1266.


\bibitem{GKO}
M.~Gubinelli, H.~Koch, T.~Oh, 
{\it Renormalization of the two-dimensional stochastic nonlinear wave equations}, 
Trans. Amer. Math. Soc. \textbf{370} (2018), no. 10, 7335--7359.

	
	
	
	
	
\bibitem{GKO2}
M.~Gubinelli, H.~Koch, T.~Oh, 
{\it Paracontrolled approach to the three-dimensional stochastic nonlinear wave equation with quadratic nonlinearity},
J. Eur. Math. Soc. (2023). doi: 10.4171/JEMS/1294

	
\bibitem{GKOT}
M.~Gubinelli, H.~Koch, T.~Oh, L.~Tolomeo,
{\it Global dynamics for the two-dimensional stochastic nonlinear wave equations}, 
Int. Math. Res. Not. IMRN (2022), no. 21, 16954--16999.
	
%	\bibitem{GOTW}
%	T.~Gunaratnam, T.~Oh, N.~Tzvetkov, H.~Weber, 
%	{\it Quasi-invariant Gaussian measures for the nonlinear beam equation in three dimensions}, 
%	arXiv:1808.03158 [math.PR].
 
	
	
	
%	\bibitem{Ha}
%	M.~Hairer, 
%	{\it A theory of regularity structures},
%	Invent. Math. 198 (2014), no. 2, 269--504.


\bibitem{HT86}
T.~Hara,
{\it A rigorous control of logarithmic corrections in four-dimensional $\phi^4$ spin systems. I. Trajectory of effective Hamiltonians},
J. Statist. Phys. \textbf{47} (1987), no. 1-2, 57--98.	

\bibitem{HassellTaoWunsch}
A.~Hassell, T.~Tao, J.~Wunsch, 
{\it Sharp Strichartz estimates on nontrapping asymptotically conic manifolds},
Amer. J. Math. \textbf{128} (2006), no. 4, 963--1024.
	
	
	
	
%	\bibitem{KMV}
%	R.~Killip, J.~Murphy, M.~Vi\c{s}an,
%	{\it Invariance of white noise for KdV on the line}, 
%	Invent. Math. 222 (2020), no. 1, 203--282.


%\bibitem{KM06}
%C.E.~Kenig, F.~Merle,
%{\it Global well-posedness, scattering and blow up for the energy-critical, focusing, nonlinear Schr\"odinger equation in the radial case}, 
%Invent. Math. 166 (2006), no. 3, 645--675.
 


	
\bibitem{KV16}
R.~Killip, M.~Vi\c{s}an,
{\it Scale invariant Strichartz estimates on tori and applications}, 
Math. Res. Lett. \textbf{23} (2016), no. 2, 445--472


\bibitem{KT05}
H.~Koch, D.~Tataru, 
\textit{Dispersive estimates for principally normal pseudodifferential operators}, 
Comm. Pure Appl. Math. \textbf{58} (2005), no. 2, 217--284.

\bibitem{Kuo}
H.-H.~Kuo,  
\textit{Introduction to stochastic integration}, 
Universitext,
Springer, New York, 2006. xiv+278 pp.

%	\bibitem{LRS}
%	J.~Lebowitz, H.~Rose, E.~Speer, 
%	{\it Statistical mechanics of the nonlinear Schr\"odinger equation}, 
%	J. Statist. Phys. 50 (1988), no. 3-4, 657--687.
	
%	\bibitem{Mc}
%	H.~P.~McKean,
%	{\it Statistical mechanics of nonlinear beam equations. IV. Cubic Schr\"{o}dinger},
%	Comm. Math. Phys. 168 (1995), no. 3, 479--491.

\bibitem{Latocca21}
M. Latocca,
\textit{Almost sure existence of global solutions for supercritical semilinear wave equations}, 
J. Differential Equations \textbf{273} (2021), 83--121.	

\bibitem{Lebeau92}
G.~Lebeau,
\textit{Control for hyperbolic equations}, 
Journ\'ees \'Equations aux D\'eriv\'ees Partielles (Saint-Jean-de-Monts, 1992), 24 pp.,
\'Ecole Polytechnique, Centre de Math\'ematiques, Palaiseau, 1992. 
ISBN: 2-7302-0249-8
	
	

\bibitem{LOZ}
G.~Li, T.~Oh, G.~Zheng,
{\it On the deep-water and shallow-water limits of the intermediate long beam equation from a statistical viewpoint}, 
arXiv:2211.03243 [math.AP].


\bibitem{LO22}
R.~Liu, T.~Oh,
{\it On the two-dimensional singular stochastic viscous nonlinear wave equations}, 
C. R. Math. Acad. Sci. Paris \textbf{360} (2022), 1227--1248.

	
\bibitem{LTW}
R.~Liu, N.~Tzvetkov, Y.~Wang, 
\textit{Existence, uniqueness, and universality of global dynamics for the fractional hyperbolic $\Phi_3^4$-model}, 
arXiv:2311.00543 [math.AP].

\bibitem{McKean}
H.P.~McKean, 
{\it Statistical mechanics of nonlinear wave equations. IV. Cubic Schr\"odinger,} 
Comm. Math. Phys. \textbf{168} (1995), no. 3, 479--491. 
{\it Erratum: Statistical mechanics of nonlinear wave equations. IV. Cubic Schr\"odinger}, Comm. Math. Phys. \textbf{173} (1995), no. 3, 675.


\bibitem{MoW}
A.~Moinat, H.~Weber,
{\it Space-time localisation for the dynamic $\Phi^4_3$ model},
Comm. Pure Appl. Math. \textbf{73} (2020), no. 12, 2519--2555.


\bibitem{MPTW}
R.~Mosincat, O.~Pocovnicu, L.~Tolomeo, Y.~Wang, 
{\it Global well-posedness of three-dimensional periodic stochastic nonlinear beam equations}, 
preprint.


\bibitem{MW1}
J.-C.~Mourrat, H.~Weber,
{\it Global well-posedness of the dynamic $\Phi^4$ model in the plane},
Ann. Probab. \textbf{45} (2017), no. 4, 2398--2476.


\bibitem{MW2}
J.-C.~Mourrat, H.~Weber,
{\it The dynamic $\Phi^4_3$ model comes down from infinity},
Comm. Math. Phys. \textbf{356} (2017), no. 3, 673--753.
	
%		\bibitem{MWX}
%		J.-C.~Mourrat, H.~Weber, W.~Xu,
%		{\it Construction of $\Phi^4_3$ diagrams for pedestrians}, From particle systems to partial differential equations, 1--46, Springer Proc. Math. Stat., 209, Springer, Cham, 2017.

%	
%	\bibitem{NahOhBelletSta12}
%	A.~Nahmod, T.~Oh, L.~Rey-Bellet, G.~Staffilani,
%	{\it Invariant weighted Wiener measures and almost sure global well-posedness for the periodic derivative NLS},
%	J. Eur. Math. Soc. 14 (2012), 1275--1330.
	
	
\bibitem{Nualart06}
D.~Nualart,
{\it The Malliavin calculus and related topics,}
Second edition,
Probab. Appl. (N. Y.),
Springer-Verlag, Berlin, 2006. xiv+382 pp.

	
%	\bibitem{OhKdv09}
%	T.~Oh,
%	{\it Invariant Gibbs measures and a.s. global well-posedness for coupled KdV systems},
%	Diff. Integ. Eq. 22 (2009), no. 7–8, 637--668.
	
%\bibitem{Oh09}
%T.~Oh,
%{\it Periodic stochastic Korteweg-de Vries equation with additive space-time white noise},
%Anal. PDE 2 (2009), no. 3, 281--304.
	
%	\bibitem{OhSBO09}
%	T.~Oh,
%	{\it Invariance of the Gibbs Measure for the Schr\"odinger-Benjamin-Ono System}, SIAM J. Math. Anal. 41 (2009), no. 6, 2207--2225.
%
%	
%	\bibitem{ORT16}
%	T.~Oh, G.~Richards, L.~Thomann, 
%	{\it On invariant Gibbs measures for the generalized KdV equations},
%	Dyn. Partial Differ. Equ. 13 (2016), no. 2, 133--153.
%	

\bibitem{OOT}
T.~Oh, M.~Okamoto, L.~Tolomeo,
{\it Focusing $\Phi^4_3$-model with a Hartree-type nonlinearity}, 
to appear in Mem. Amer. Math. Soc.


%\bibitem{OOTol2}
%T.~Oh, M.~Okamoto, L.~Tolomeo, 
%{\it Stochastic quantization of the $\Phi^3_3$-model}, 
%arXiv:2108.06777 [math.PR].


\bibitem{OP16}
T.~Oh, O.~Pocovnicu,
{\it Probabilistic global well-posedness of the energy-critical defocusing quintic nonlinear wave equation on $\R^3$},
J. Math. Pures Appl. (9) \textbf{105} (2016), no. 3, 342--366.


\bibitem{ORTz}
T.~Oh, T.~Robert, N.~Tzvetkov, 
{\it Stochastic nonlinear wave dynamics on compact surfaces}, 
Ann. H. Lebesgue \textbf{6} (2023), 161--223.


\bibitem{OST}
T. Oh, K. Seong, L. Tolomeo, 
{\it A remark on Gibbs measures with log-correlated Gaussian fields},
arXiv:2012.06729 [math.PR].


\bibitem{OT1}
T.~Oh, L.~Thomann, 
{\it A pedestrian approach to the invariant Gibbs measure for the 2-$d$ defocusing nonlinear
Schr\"odinger equations}, 
Stoch. Partial Differ. Equ. Anal. Comput. \textbf{6} (2018), no. 3, 397--445.




\bibitem{OT2}
T.~Oh, L.~Thomann, 
{\it Invariant Gibbs measure for 
the 2-$d$ defocusing nonlinear wave equations}, 
Ann. Fac. Sci. Toulouse Math. (6) \textbf{29} (2020), no. 1, 1--26.

\bibitem{P07}
B.~Pausader, 
\textit{Scattering and the Levandosky-Strauss conjecture for fourth-order nonlinear wave equations}, J. Differential Equations \textbf{241} (2007), no. 2, 237--278.

\bibitem{P10}
B. Pausader, \textit{Scattering for the defocusing beam equation in low dimensions},
Indiana Univ. Math. J. \textbf{59} (2010), no. 3, 791--822.


\bibitem{PT01}
L.A. Peletier, W.C. Troy,
\textit{Spatial patterns.
Higher order models in physics and mechanics},
Progr. Nonlinear Differential Equations Appl., 45,
Birkh\"auser Boston, Inc., Boston, MA, 2001. xvi+341 pp.
%ISBN:0-8176-4110-6

%\bibitem{Roy}
%T.~Roy,
%{\it On the interpolation with the potential bound for global solutions of the defocusing cubic wave equation on $\T^2$},
%J. Funct. Anal. \textbf{270} (2016), no. 9, 3280--3306.


%	
%	\bibitem{ORSW20}
%	T.~Oh, T.~Robert, P.~Sosoe, Y.~Wang,
%	{\it Invariant Gibbs dynamics for the dynamical sine-Gordon model}, 
%	Proc. Roy. Soc. Edinburgh Sect. A (2020), 17 pages. doi: https://doi.org/10.1017/prm.2020.68


	
	
	
%	\bibitem{R}
%	G.~Richards, 
%	{\it Invariance of the Gibbs measure for the periodic quartic gKdV}, 
%	Ann. Inst. H. Poincaré Anal. Non Lin\'{e}aire 33 (2016), no. 3, 699--766.
	
%	\bibitem{RyangSaitoShigemoto}
%	S.~Ryang, T.~Saito, K.~Shigemoto,
%	{\it Canonical stochastic quantization}, 
%	Progr. Theoret. Phys. 73 (1985), no. 5, 1295--1298.

\bibitem{RSS}
S.~Ryang, T.~Saito, K.~Shigemoto,
{\it Canonical stochastic quantization},
Progr. Theoret. Phys. \textbf{73} (1985), no. 5, 1295--1298.

%
%\bibitem{Simon}
%B.~Simon, 
%{\it  The $P(\phi)_2$ Euclidean (quantum) field theory,} Princeton Ser. Phys., Princeton University Press, Princeton, NJ, 1974. xx+392 pp.

 
	
\bibitem{SS79}
A.D.~Sokal,
{\it A rigorous inequality for the specific heat of an Ising or $\varphi^4$ ferromagnet}, 
Phys. Lett. A \textbf{71} (1979), no. 5-6, 451--453.	
	
	
	
	
	%	\bibitem{TAO06}
	%	T.~Tao,
	%	{\it Nonlinear Dispersive Equations. Local and Global Analysis}, CBMS Regional Conference Series in Mathematics, 106. Published for the Conference Board of the Mathematical Sciences, Washington, DC; by the American Mathematical Society, Providence, RI, 2006. xvi+373 pp. ISBN 0-8218-4143-2
	
	
%	\bibitem{Tao}
%	T.~Tao,
%	{\it Global well-posedness of the Benjamin–Ono equation in $H^1(\R)$},
%	J. Hyperbolic Differ. Equ. 1 (2004), no. 1, 27--49.
	
%	\bibitem{ThTzv10}
%	L.~Thomann, N.~Tzvetkov,
%	{\it Gibbs measure for the periodic derivative nonlinear Schr\"odinger
%	equation},
%	Nonlinearity 23 (2010), no. 11, 2771--2791.
	
%	\bibitem{Tz06}
%	N.~Tzvetkov, 
%	{\it Invariant measures for the nonlinear Schro\"odinger equation on the disc},
%	Dyn. Partial Differ. Equ. 3 (2006), no. 2, 111--160.
	
\bibitem{Tolomeo-thesis}
L. Tolomeo,
\textit{Stochastic dispersive PDEs with additive space-time white noise}, PhD Thesis, University
of Edinburgh, 2019.

\bibitem{Tolomeo20}
L.~Tolomeo,
\textit{Unique ergodicity for a class of stochastic hyperbolic equations with additive space-time white noise}, Comm. Math. Phys. \textbf{377} (2020), no. 2, 1311--1347.	


\bibitem{Tolomeo21}
L.~Tolomeo, 
\textit{Global well posedness of the two-dimensional stochastic nonlinear wave equation on an unbounded domain}, Ann. Probab. \textbf{49} (2021), no. 3, 1402--1426.

\bibitem{Tzv08}
N.~Tzvetkov,
{\it Invariant measures for the defocusing nonlinear Schr\"odinger equation},
Ann. Inst. Fourier (Grenoble) \textbf{58} (2008), no. 7, 2543--2604.
	





\bibitem{SWK50}
S.~Woinowsky-Krieger,
{\it The effect of an axial force on the vibration of hinged bars},
J. Appl. Mech. \textbf{17} (1950), 35--36.





\bibitem{XB16}
B.~Xia.
{\it Randomness and PDEs},
Analysis of PDEs [math.AP], PhD Thesis, Universit\'e, Paris-Saclay, 2016.





	
\end{thebibliography}
\end{document}